\pgfplotsset{compat=1.11}
\newtheorem{theorem}{Theorem}[section]
\newtheorem{proposition}[theorem]{Proposition}
\newtheorem{proposition*}{Proposition}
\newtheorem{lemma}[theorem]{Lemma}
\newtheorem{conjecture}[theorem]{Conjecture}
\newtheorem{mainthm}{Theorem}
\DeclareMathOperator{\G}{\Gamma}
\DeclareMathOperator{\Om}{\Omega}
\DeclareMathOperator{\Si}{\Sigma}
\DeclareMathOperator{\Area}{Area}
\DeclareMathOperator{\diam}{diam}
\DeclareMathOperator{\length}{\text{\rm L}}
\DeclareMathOperator{\F}{\mathbf{F}}
\newcommand{\R}{\mathbb{R}}
\newcommand{\eps}{\varepsilon}
\newcommand{\finalbound}{22500}
\newlength\mylen
\tikzset{
tricolor/.style n args={3}{
  decoration={
    markings,
    mark=at position 0.5 with {
      \node[draw=none,inner sep=0pt,fill=none,text width=0pt,minimum size=0pt] {\global\setlength\mylen{\pgfdecoratedpathlength}};
    },
  },
  draw=#1,
  dash pattern=on 0.333333\mylen off 0.666666\mylen,
  preaction={decorate},
  postaction={
    draw=#2,
    dash pattern=on 0.333333\mylen off 0.333333\mylen,dash phase=0.333333\mylen
  },
  postaction={
    draw=#3,
    dash pattern=on 0.333333\mylen off 0.666666\mylen,dash phase=0.333333\mylen
    },
  }
}
\theoremstyle{definition}
\newtheorem{definition}[theorem]{Definition}
\newtheorem{remark}[theorem]{Remark}
\title[Length of geodesic in PSC manifold]{Length of a closed geodesic in 3-manifolds of positive scalar curvature}
\author{Yevgeny Liokumovich}
\address{Department of Mathematics, University of Toronto}
\author{Davi Maximo}
\address{Department of Mathematics, University of Pennsylvania}
 \author{Regina Rotman}
\address{Department of Mathematics, University of Toronto}
\begin{document}
	
	\maketitle
	
	\begin{abstract} 
 Let $M$ be a closed $3$-dimensional Riemannian manifold with positive scalar curvature, $R_g \geq 6$. We show that $M$ contains a non-trivial
 closed geodesic of length less than $\finalbound$. This confirms a conjecture of M. Gromov in dimension $3$.
	\end{abstract}

\section{Introduction}
Closed geodesics are basic objects of study in Riemannian Geometry, appearing  in other fields of mathematics, such as Topology and Dynamical Systems, as well as Physics.

Let $M^n$ be a closed Riemannian manifold. The existence of a periodic geodesic was established in 1951 by A. Fet and L. Lusternik by using Morse theory on the space $\Lambda M^n$ of closed piecewise differentiable curves on $M^n$ \cite{LyuFet}. It is natural to want to estimate the length of a shortest closed geodesic in terms of other geometric parameters of $M^n$, such as its volume, diameter and curvature. Generally speaking, it is easier to bound the length of a shortest closed geodesic on non-simply connected closed Riemannian manifolds.  For example, it is easy to see that the length of the shortest closed geodesic on non-simply connected closed Riemannian manifolds is always bounded above by twice the diameter of the manifold. In 1949 K. Loewner found a sharp area upper bound for the length of the shortest closed geodesic on a Riemannian $2$-torus, followed by 1951 result of C. Pu establishing a sharp area upper bound for the length of the shortest closed geodesic on a Riemannian real projective plane. These results gave rise to Systolic Geometry. See ~\cite{CroK2003},
~\cite{Katz2007} for a survey of the field.  

In his foundational paper ~\cite{gromov1983}, M. Gromov asked if there exists a constant $c(n)$, such that the length of a shortest closed geodesic, $l(M^n)$ can always be bounded from above by $c(n) \cdot vol(M^n) ^{\frac{1}{n}}$, where $vol(M^n)$ denotes the volume of $M^n$. In the same paper he proved his famous systolic inequality for essential manifolds: a class of manifolds, which, in particular, include closed Riemannian manifolds that admit a metric on non-positive sectional curvature. Recently, the constant in this inequality was improved to $c(n) =n$ by A. Nabutovsky in \cite{nab2022}, building on results from
\cite{papasoglu}, \cite{LLNR}. One can likewise  ask whether $l(M^n)$ can always be uniformly bounded in terms of the diameter of $M^n$. Note that it was demonstrated by F. Balacheff, C. Croke and M. Katz in 
~\cite{balcrokekatz2009}, unlike the non-simply connected case, when $M^n$ is simply connected it is not always possible to bound $l(M^n)$ by twice the diameter. 

When $M^n$ is a simply connected closed Riemannian manifold, it is difficult to establish curvature-free upper bounds for $l(M^n)$. In fact, the only case when such upper bounds are known to exist for a simply connected manifold is that of a Riemannian $2$-sphere. The first such upper bounds were established by C. Croke in ~\cite{croke1988}. They were subsequently improved by A. Nabutovsky and the third author in \cite{nabrot2002} and independently by S. Sabourau in ~\cite{sab2004} to $4d$ and $8\sqrt{A}$, where $d$ is the diameter and $A$ is the area of the manifold. The area bound was further improved to $4\sqrt{2A}$ by the third author in ~\cite{rot2006}. Also, in ~\cite{adelpal2022} I. Adelstein and F. Vargas Pallete proved that on any non-negatively  curved Riemannian $2$-sphere, there exists a closed geodesic of length at most $3d$. 

A classical theorem of Toponogov \cite{Toponogov}
states that simple closed geodesics
 in a Riemannian two-sphere with sectional curvature $K_g \geq 1$ have length at most 
$2 \pi$. In dimension $n >2$, curvature bounds are helpful in finding upper bounds for $l(M^n)$ (see ~\cite{nabrot2003}, ~\cite{wuzhu2022}), and to have a positive curvature (sectional, $e.g$ \cite{BTZ82}) seems to be particularly useful. For a closed Riemannian manifold $M^n$ with a positive Ricci curvature, $Ric \geq \frac{n-1}{r^2}$,  it was proven by the third author that 
$l(M^n) \leq 8\pi n r$ in  ~\cite{rot2024}. The bound was improved to $\pi n r$ by H.-B. Rademacher in ~\cite{rad2024}.

The main result of our paper provides a bound of this nature in dimension 3, replacing the lower Ricci curvature bound with a much weaker assumption of lower scalar curvature bound. 

\begin{mainthm}\label{thm:mainB}
Let $(M^3,g)$ be a closed manifold with scalar curvature $R_g\geq 6 $.
Then $M$ contains a non-trivial closed geodesic of length at most
$\finalbound$.
\end{mainthm}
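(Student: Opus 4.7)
The plan is to convert the scalar curvature hypothesis into a quantitative topological-geometric decomposition of $M$, and then use the decomposition to produce a short closed geodesic via curve shortening or min-max. The starting point is a structural result, in the spirit of Gromov and Liokumovich--Maximo: a closed orientable $3$-manifold with $R_g \geq 6$ has uniformly bounded Urysohn $1$-width. Concretely, I would prove that there exist a universal constant $D_0$ and a continuous map $\phi : M \to Y$ onto a $1$-dimensional simplicial complex such that every fiber $\phi^{-1}(y)$ has diameter at most $D_0$. The argument proceeds by iteratively constructing stable $\mu$-bubbles: because $R_g \geq 6$, the induced metric on a stable $\mu$-bubble supports a positive weighted scalar curvature, and a Bonnet--Myers-type argument then bounds its intrinsic diameter by a universal constant; stacking such bubbles along a sweepout of $M$ yields the map to $Y$. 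This is essentially the Schoen--Yau/Perelman topological classification made quantitative.

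With the $1$-width bound in hand, I would split into cases based on $\pi_1(M)$. If $\pi_1(M) \neq 1$---which by geometrization and the PSC assumption means $M$ has a spherical space form or $S^2 \times S^1$ summand---then a standard lifting argument together with the $D_0$-fiber bound produces a non-contractible piecewise geodesic loop of length at most a universal multiple of $D_0$. Birkhoff's curve-shortening process then converts it to a non-trivial closed geodesic of the same length order. If $\pi_1(M) = 1$, then $M \cong S^3$ and I would apply a Lusternik--Schnirelmann--Fet min-max on the free loop space: the Urysohn $1$-width structure provides a $2$-parameter sweepout of $M$ by loops of uniformly bounded length representing the generator of $\pi_2(\Lambda M / S^1) \cong \pi_3(S^3) \cong \mathbb{Z}$, and the associated min-max value is realized by a non-trivial closed geodesic of comparable length. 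Interpolation techniques (à la Nabutovsky--Rotman) would be used to guarantee that the sweepout can be made entirely by short curves.

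The main obstacle I expect is explicit constant tracking to reach the numerical bound $\finalbound$. The diameter estimate for stable $\mu$-bubbles involves an optimization between the stability inequality and the scalar curvature term that is inherently lossy; getting a small universal $D_0$ requires careful choice of the warping function in the $\mu$-bubble construction. A second, more serious source of loss is the passage from the $1$-width sweepout to a genuinely short closed geodesic: in the non-simply connected case one must control how short loops lift to the universal cover to ensure the resulting geodesic is not a trivial iterate; in the simply connected case the main difficulty lies in the quantitative min-max, where building a short $2$-parameter sweepout whose width equals the length of the output geodesic is delicate and typically introduces a multiplicative constant much larger than one. Multiplying the constants from these steps yields a final bound of the order of $\finalbound$, and verifying that each step is sharp enough to stay under this threshold is, I expect, the most technically demanding part of the proof.
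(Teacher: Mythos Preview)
Your high-level skeleton---reduce to the orientable case, use the PSC classification to split into reducible, spherical space form, and $S^3$, and in the $S^3$ case run a two-parameter min-max on the free loop space---matches the paper. The difficulty is entirely concentrated in the $S^3$ case, and there your proposal has a genuine gap that is, in fact, the central obstacle the paper is written to overcome.

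You propose to obtain a map $\phi:M\to Y$ to a $1$-complex whose fibers have \emph{intrinsic} diameter at most $D_0$, arguing that stable $\mu$-bubbles have a Bonnet--Myers intrinsic diameter bound and then ``stacking such bubbles along a sweepout of $M$.'' The first part is fine: individual stable minimal (or $\mu$-bubble) $2$-spheres in $R_g\ge 6$ do have controlled area and intrinsic diameter. The second part is not: those bounds are for the minimizers themselves, not for the intermediate surfaces needed to make a continuous family. When you interpolate between bubbles---by mean curvature flow, by isotopy, or by any known mechanism---the intrinsic diameter of the leaves can blow up even while the ambient diameter and area stay bounded (think of a sphere that grows long thin fingers wrapping around inside a small ambient ball; this is the three-legged-starfish phenomenon). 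The paper states exactly this: a tree-foliation with bounded area and \emph{ambient} diameter is known from \cite{LiMa}, but upgrading to bounded \emph{intrinsic} diameter is recorded as the open Conjecture~\ref{conjecture}. Your proposal implicitly assumes that conjecture.

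Without intrinsic diameter control on all leaves, the step ``the Urysohn $1$-width structure provides a $2$-parameter sweepout of $M$ by loops of uniformly bounded length'' does not follow: the Nabutovsky--Rotman interpolation (\cite{LNR}) needs area \emph{and} intrinsic diameter of each $2$-sphere. The paper's way around this is to introduce a weaker, ambient notion---$L$-shortness of an embedded sphere---that (i) is implied by intrinsic area/diameter bounds, (ii) survives the cut-and-paste operations of a ``discrete mean curvature flow with surgery'' they design, and (iii) is still strong enough (Section~\ref{sec: parametric}) to deform an arbitrary sweepout of each leaf into one by curves of length $\le 5L$. That package replaces your unproven intrinsic-diameter foliation and is where the real work, and the constant $\finalbound$, comes from.
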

This confirms a conjecture of Gromov
\cite[Conjecture (a)]{Gr2018} in dimension $3$. 
The bound in Theorem \ref{thm:mainB} is not sharp but it reveals an underlying rigidity in the space of 3-manifolds with $R_g \geq 6$, a condition which is flexible enough to allow for complicated geometric behavior such as splines (gravitational wells) and drawstrings, see $e.g.$~\cite{kazaras2023}.

\begin{figure}[h!]
\centering	
\includegraphics[width=75mm,scale=0.4]{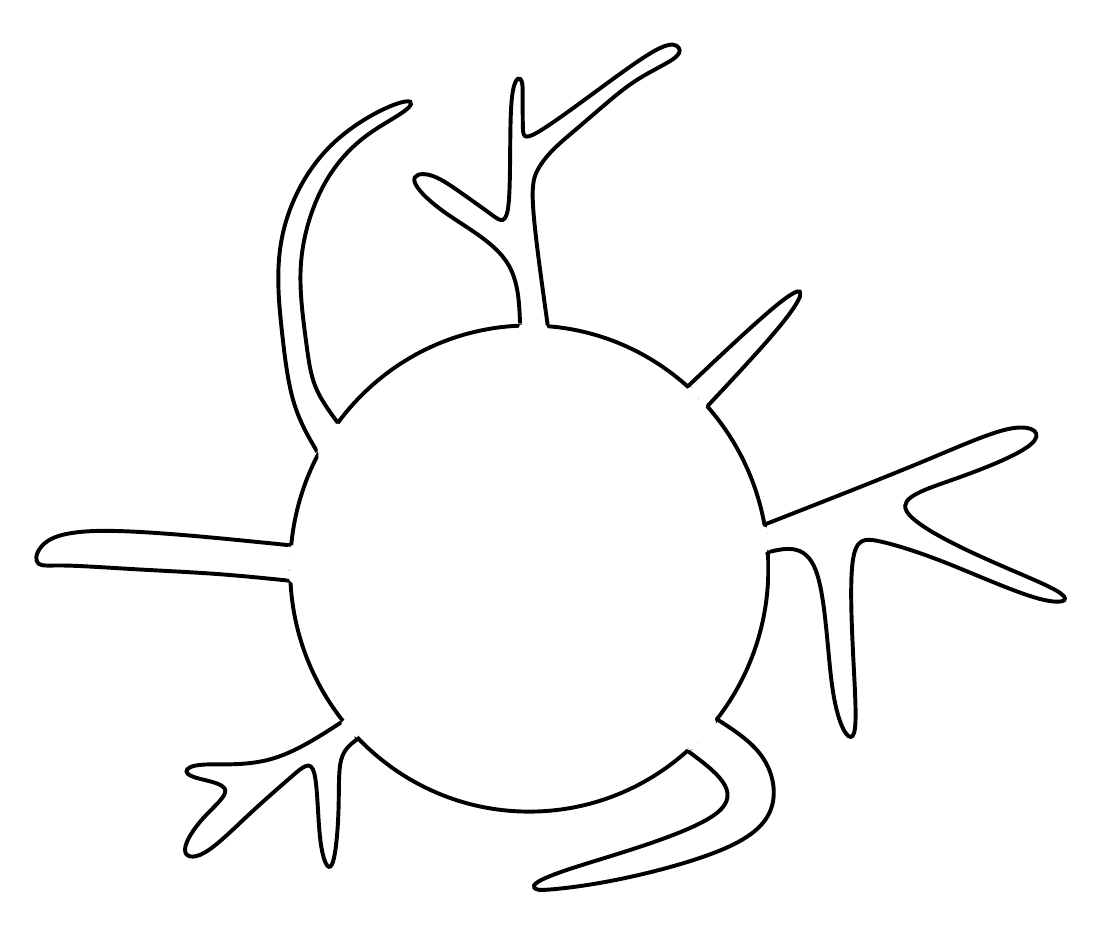}
\caption{A 3-sphere with many splines and $R\geq 6$ }
\end{figure}

Our proof will rely on the existence theorem of Fet and Lusternik. Their agument can be summarized as follows.

Let $\Omega M^n$ be the space of piecewise differentiable curves on $M^n$. From homotopy theory equations we know that there exists an integer $q$, such that $\pi_q\Omega (M^n) \neq \{0\}$, while $\pi_q(M^n) = \{0\}$. Let us consider a non-contractible map $f:S^q \longrightarrow \Omega M^n$. We will try to deform $f$ to a map whose image lies in the space of constant curves $ \Omega_0 M^n$ along the integral curves of the negative gradient of the energy functional on $\Omega M^n$. We should not be able to do it as it would contradict the non-contractibility of $f$. Thus, we obtain a closed geodesic as an obstruction to this extension. 

From this proof we can see that if we can estimate the length of curves in a non-contractible sphere $S^q \subset \Omega M^n$ in terms of some geometric parameters of $M^n$ then we will immediately obtain an upper bound for the length of a shortest closed geodesic. 

Note that it is not always possible to obtain such sweepout. For example, in ~\cite{lio2013}, the first author constructed a sequence of metrics on $S^2$ for which optimal sweepouts cannot be controlled solely in terms of the diameter of the sphere. The construction is based on a result of S. Frankel and M. Katz, in which they have defined a family of metrics on a $2$-disk with fixed diameter, but such that they require an uncontrollable length increase to contract the boundary of the disk to a point (see ~\cite{frakatz1993}). Likewise, O. Alshawa and H. Y. Cheng have constructed a sequence of metrics on $S^3$ of fixed diameter and volume where the optimal sweepouts must have arbitrarily long curves (see ~\cite{alscheng2024}).

Hence, one requires curvature assumptions to guarantee existence of a sweepout by short closed curves. In \cite{liozhou}
it was shown that 3-manifolds with positive Ricci curvature and bounded volume admit a sweepout by short 1-cycles. Note, however, that
the minimax method applied to the space of 1-cycles may not produce a geodesic, but only a stationary geodesic net.

It may also be possible to obtain a short closed geodesic on an $n$-dimensional sphere even if the sweepout by short curves does not exist using the following dichotomy: either one obtains a sweepout or shows that the existence of a sweepout by short curves is obstructed precisely by the existence of short closed geodesics of Morse index less than $n-1$.

However, in this paper we show that a sweepout of a Riemannian $3$-sphere by short closed curves exists and, thus, the closed geodesic we obtain will be a so called min-max geodesic with Morse index $2$ for generic Riemannian metrics. 

\begin{mainthm}\label{thm:mainC}
Let $(S^3,g)$ be a Riemannian 3-sphere with scalar curvature $R_g\geq \Lambda_0>0$.
Then there exists a non-contractible map $F: (I^2, \partial I^2) \longrightarrow (\Omega S^3, \Omega_0 S^3)$
with the length of $F(x)$ bounded by $\frac{\finalbound \sqrt{6}}{\sqrt{\Lambda_0}}$ for all $x$.
Hence, $(S^3,g)$ contains a non-trivial closed geodesic $\gamma$ of length at most
$\frac{\finalbound \sqrt{6}}{\sqrt{\Lambda_0}}$ and Morse index $\leq 2$. Moreover, if the metric $g$
is bumpy then $\gamma$ has Morse index $2$.
\end{mainthm}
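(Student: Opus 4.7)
I would begin by rescaling the metric by a factor $\sqrt{\Lambda_0/6}$ so that it suffices to prove the theorem in the normalized case $R_g\geq 6$ with target length bound $\finalbound$; the stated bound $\finalbound\sqrt{6}/\sqrt{\Lambda_0}$ then follows by undoing the rescaling. Once a non-contractible $F:(I^2,\partial I^2)\to(\Omega S^3,\Omega_0 S^3)$ with $\length(F(x))\leq \finalbound$ has been produced, the min-max value for the length functional on $\Omega S^3$ associated with the class of $F$ is realized by a non-trivial closed geodesic $\gamma$ of length at most $\finalbound$. Classical Morse theory on the free loop space bounds the Morse index of $\gamma$ by the parameter dimension $\dim I^2 = 2$. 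For a bumpy metric, a standard deformation argument (if the index were $\leq 1$, a local pseudo-gradient flow around the critical orbit would let one push $F$ off of it and onto $\Omega_0 S^3$) promotes this to equality.

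\textbf{Length-controlled sweepout via scalar curvature.} The heart of the proof is the construction of $F$ with the announced length bound. Since $R_g \geq 6$ tolerates arbitrarily long splines and drawstrings, no diameter control is available a priori; instead, I would use positive scalar curvature at the level of stable $2$-dimensional surfaces. Any stable $2$-sphere $\Sigma\subset M$ satisfies the Schoen--Yau second variation inequality
\begin{equation*}
\int_{\Sigma}\bigl(|A|^{2}+R_{g}/2\bigr)\,dA\;\leq\;\int_{\Sigma}K_{\Sigma}\,dA,
\end{equation*}
which together with $R_{g}\geq 6$ and Gauss--Bonnet gives $\Area(\Sigma)\leq 4\pi/3$ and, via a stability-based isoperimetric estimate, a universal diameter bound on $\Sigma$. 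A Gromov-style $\mu$-bubble construction (or a degree-one min-max sweepout of $S^{3}$ by almost-minimal $2$-spheres) then decomposes $S^{3}$ into a controlled number of regions separated by such short-diameter $2$-spheres. Within each region one applies Nabutovsky--Rotman/Sabourau-type $2$-dimensional sweepouts to each slice, interpolating across neighboring slices through short geodesic connections, and concatenating the results into a single $2$-parameter family $F$ of closed curves in $S^{3}$.

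\textbf{Main obstacle: topology and bookkeeping.} The step I expect to be hardest is guaranteeing simultaneously that (i) $F$ represents a generator of $\pi_{2}(\Omega S^{3})\cong \pi_{3}(S^{3})\cong \Z$, equivalently that its adjoint $\hat{F}:S^{3}\to S^{3}$ has degree one, and (ii) every loop $F(x)$ has length at most $\finalbound$. Degree one is natural if the ambient sweepout of $S^{3}$ by $2$-spheres is of degree one and each $2$-sphere is swept by loops of winding number one, but propagating both length and winding through the gluing is delicate: one must bound the combinatorial complexity of the $\mu$-bubble decomposition purely in terms of $R_{g}$ and control the length overhead of each interpolation. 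This explicit accounting is precisely where the constant $\finalbound$ is fixed, and it is, to my mind, the bulk of the work in proving Theorem~\ref{thm:mainC}.
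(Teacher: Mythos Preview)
Your reduction by rescaling and your extraction of a geodesic via Morse theory on the free loop space are correct and match the paper. The gap is in the middle step, and it is exactly the point the paper isolates as the main difficulty.

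You propose to foliate $S^3$ by $2$-spheres each of which has both area and \emph{intrinsic} diameter bounded in terms of $\Lambda_0$, and then to apply $2$-dimensional sweepout estimates (which indeed require both area and diameter) slice by slice. But obtaining such a foliation is precisely Conjecture~\ref{conjecture} in the paper, which remains open. Your suggested mechanisms do not produce it: a $\mu$-bubble or a stable minimal sphere gives you \emph{one} surface with diameter control, not a continuous family; and in a min-max sweepout by ``almost-minimal'' spheres there is no reason the generic (non-minimal) leaves have any intrinsic diameter bound. Concretely, a leaf can grow long thin fingers that wrap around inside $M$ while staying in a small ambient neighbourhood; this keeps the ambient diameter small but blows up the intrinsic diameter, and then the Nabutovsky--Rotman/Sabourau $2$-sphere sweepout bounds simply do not apply. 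Your ``bookkeeping'' paragraph correctly flags the gluing as delicate, but the real obstruction is earlier: you never have short-diameter slices to glue.

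The paper circumvents this by replacing intrinsic diameter control with a strictly weaker, extrinsic notion it calls \emph{$L$-shortness}: every closed curve on the leaf admits a contracting family of arcs of length $\leq L$ \emph{in $M$}, not in the leaf. Two things make this work. First, $L$-shortness is robust under the surgeries needed to push a mean-convex sphere inward: the paper runs a combinatorial version of mean curvature flow (replace the surface inside a small ball by an isotopy minimizer, then neck-pinch along controlled discs), and shows $L$-shortness is preserved with explicit additive loss at each step, whereas intrinsic diameter is not. This yields a tree-foliation by $L$-short spheres with $L<4500$. Second, $L$-shortness is strong enough to shorten an arbitrary $2$-parameter sweepout of $M$ a posteriori, using the parametric path-homotopy lemmas of \cite{NR13}; one does not sweep each leaf by short curves directly, but instead builds short arcs from a basepoint to every point of each long loop and concatenates. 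That two-stage strategy, not a direct slice-by-slice $2$-sphere sweepout, is what fixes the constant $\finalbound$.
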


\subsection{Proof overview} 
In Section \ref{non-sphere} we use topological classification
of compact 3-manifolds with positive scalar curvature to reduce Theorem \ref{thm:mainB}
to the case of 3-spheres. In the rest of the paper we construct a non-contractible
2-parameter family of short closed curves, proving Theorem \ref{thm:mainC}, which
via Morse theory on the free loop space \cite{bott1982} implies Theorem \ref{thm:mainB}
for $M \cong S^3$.

Our construction of a family of short closed curves on the $3$-sphere
can be roughly described as the following dimension reduction argument (with one important caveat
we explain below): first we construct
a family of ``small" 2-spheres foliating $M$, then we construct a sweepout of each 2-sphere 
in the foliation by short curves with the property that the sweepout changes continuously
as we vary the 2-spheres. 

Now we explain the caveat to the strategy described above. It has to do with the notion
of ``smallness" for 2-spheres that we need to construct a sweepout by short
closed curves. It is shown in \cite{LNR} that we can construct a sweepout of 
a 2-sphere by curves controlled in terms of the area and diameter of the 2-sphere
(but neither area only nor diameter only bounds are sufficient). Hence, the strategy
in the previous paragraph can be realized if we could prove the existence of a foliation
of $M$ by 2-spheres of small area and \emph{intrinsic} diameter. We state this result as a conjecture below
in terms of tree-foliations (see
 Definition \ref{def: tree-foliation}).

\begin{conjecture} \label{conjecture}
    There exists a constant $C>0$ with the following property.
    Let $(S^3,g)$ be a Riemannian 3-sphere with scalar curvature $R_g\geq \Lambda_0>0$.
Then there exists a tree-foliation $\{ \Sigma_t \}_{t \in T}$ of $(S^3,g)$
with 
\begin{enumerate}
    \item $\diam(\Sigma_t) \leq \frac{C}{\sqrt{\Lambda_0}}$;
    \item $\Area(\Sigma_t) \leq \frac{C}{\Lambda_0}$.
\end{enumerate}
Here $\diam(\Sigma_t)=\sup_{x,y}\{dist_{\Sigma_t}(x,y):x,y\in \Sigma_t\}$ denotes the diameter of $\Sigma_t$ in the intrinsic metric on $\Sigma_t$.
\end{conjecture}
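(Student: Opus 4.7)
The plan is to construct the tree-foliation via a family of stable $\mu$-bubbles, following the strategy introduced by Gromov for PSC 3-manifolds. First I would fix a Morse function $h\colon S^3 \to \mathbb{R}$ with two critical points, and for each pair $(t,\tau)$ in a suitable parameter space minimize the functional $\mathcal{H}^2(\partial^*\Om)-\tau\,\mathrm{Vol}(\Om)$ among Caccioppoli sets $\Om$ containing $h^{-1}(-\infty,t-\eps)$ and contained in $h^{-1}(-\infty,t+\eps)$. The resulting minimizer has reduced boundary which is a stable constant mean curvature surface with $H=\tau$; in the simply connected $S^3$ setting, Schoen--Yau type arguments force each component to be a 2-sphere. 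Letting $(t,\tau)$ vary while tracking topological bifurcations, both at critical values of $h$ and at jumps of the minimizer, yields a tree $T$ parametrizing a family $\{\Si_t\}_{t\in T}$ that sweeps out $M$.

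For the area bound (2), I would apply the classical stability estimate. For a stable CMC 2-sphere $\Si \subset M$, combining the second variation inequality with the Gauss equation and Gauss--Bonnet gives
$$\int_\Si \left(\tfrac{1}{2}R_g + |A|^2 + \tfrac{1}{2}H^2\right)\, d\mathcal{H}^2 \;\leq\; \int_\Si K_\Si\, d\mathcal{H}^2 \;=\; 4\pi,$$
and with $R_g\geq \Lambda_0$ this forces $\Area(\Si)\leq 8\pi/\Lambda_0$, giving (2). The tree construction should be set up so that each leaf inherits this stability bound, after a small generic perturbation of the prescribing data to avoid degenerations in the minimization.

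The main obstacle, and the heart of the conjecture, is the intrinsic diameter bound (1). Area control alone does not preclude that a $\mu$-bubble leaf develops long thin necks or drawstring-type regions \emph{intrinsically}, even while its ambient diameter is controlled. To attack this I would attempt an iterated refinement: if a leaf $\Si_t$ violates the intrinsic diameter bound, use isoperimetry on $\Si_t$ to find a short loop $\gamma \subset \Si_t$ dividing $\Si_t$ into two pieces of comparable area, and then use $\gamma$ as a boundary condition for a secondary $\mu$-bubble problem in $M$, transverse to $\Si_t$, refining the tree by introducing a new branching at $t$. The crucial question is whether this refinement terminates at a depth bounded only by $\Lambda_0$, and whether the separating loops produced can be made uniformly short via an additional stability inequality involving $R_g$.

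\textbf{Main obstacle.} I expect the intrinsic diameter control to be the decisive difficulty: I do not see a clean mechanism ruling out iterated drawstrings inside a stable $\mu$-bubble, which is consistent with the fact that the rest of the paper by-passes the conjecture and replaces intrinsic diameter control by a more subtle sweepout argument.
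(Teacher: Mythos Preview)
The statement you are attempting is labelled a \emph{Conjecture} in the paper, and the authors state explicitly that they are \emph{not} able to prove it; they bypass it by introducing the weaker notion of $L$-shortness (Theorem~\ref{thm: L-short foliation}) precisely because intrinsic diameter control of the leaves is out of reach. So there is no proof in the paper to compare against, and your proposal should be read as an attack on an open problem. You already identify the correct failure point: the intrinsic diameter bound (1). Your iterated refinement idea---cutting a bad leaf along a short loop and re-running a transverse $\mu$-bubble problem---has no termination mechanism. Nothing in the stability inequality for a CMC 2-sphere bounds the \emph{intrinsic} diameter (only area and, via Schoen--Yau/Gromov--Lawson band estimates, the \emph{extrinsic} diameter), and the drawstring examples you allude to show that a leaf can have arbitrarily long intrinsic necks while keeping $R_g\ge\Lambda_0$ and small area. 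Each refinement step may produce new leaves with the same pathology, so the depth of the tree is not controlled by $\Lambda_0$ alone.

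There is also a gap earlier in the outline, independent of (1). Minimizers of $\Om\mapsto\mathcal{H}^2(\partial^*\Om)-\tau\,\mathrm{Vol}(\Om)$ with obstacle constraints $\{h<t-\eps\}\subset\Om\subset\{h<t+\eps\}$ need not be stable CMC where they touch the barriers, so your area computation does not apply on the nose; and even away from barriers, minimizers are generally discontinuous in $(t,\tau)$, so ``tracking bifurcations'' does not by itself yield a Morse function $g\circ f$ on $M$ as required by Definition~\ref{def: tree-foliation}. The paper's actual construction of a tree-foliation (Section~\ref{sec: L-short}) uses a discrete mean-curvature-flow-with-surgery argument rather than a $\mu$-bubble sweep precisely to maintain the Morse/foliation structure, and even then it only achieves $L$-shortness, not intrinsic diameter control.
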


In \cite{LiMa}, Mean Curvature Flow with Surgery was used to show that a 3-sphere with positive scalar curvature
admits a tree-foliation by 2-spheres of controlled area and \emph{ambient}
diameter. Unfortunately, this is not enough to guarantee existence of a sweepout by short curves.
To any 2-sphere $\Sigma$ embedded in $M$ one can attach three very long ``fingers" (or ``spikes") that wrap around $\Sigma$ inside $M$, so that they stay in a small neighbourhood
of $\Sigma$. This operation increases the diameter of $\Sigma$ in the ambient metric by an arbitrarily small amount,
but any sweepout of such a sphere by closed curves will contain a curve that climbs up and down one of the spikes and hence will have very large length
(examples like this of Riemannian metrics on the 2-sphere resembling a three-legged starfish were considered
in \cite[Remark 4.10]{sab2004}, \cite[Fig. 1]{liokumovich2014surfaces}).

While we are not able to prove Conjecture \ref{conjecture},
or prove a bound for any other geometric invariant of 2-spheres in 
a foliation that would imply existence of a sweepout by short curves of each sphere, 
we found a way to control a different geometric invariant that we call {\it $L$-shortness}. 
$L$-shortness guarantees that for every closed curve $\gamma$ on a surface
$\Sigma$ embedded in $M$ there exists a continuous family of arcs of length at most $L$ in $M$ connecting
the points of $\gamma$ to a fixed point $\gamma(0)$. Note that the curves may not 
necessary lie on $\Sigma$. We then use deformation techniques that were developed in \cite{NR13} to prove that $L$-shortness of an embedded $2$-sphere $\Sigma$ implies that an arbitrary sweepout of $\Sigma$ can be deformed to a family of closed curves of length controlled in terms of $L$ and, moreover, this deformation can be made continuously with respect to deformations of $\Sigma$. 

Hence, our proof can be summarized as follows. First, in Section \ref{sec: L-short} we construct a tree-foliation of $M$
by $L$-short $2$-spheres for $L$ bounded above in terms of the lower bound for scalar curvature of $M$. Then in Section \ref{sec: parametric}
we consider an arbitrary family of sweepouts of these $2$-spheres (together forming a two-parameter family of closed curves in $M$) and show that $L$-shortness can be used to deform this two-parameter family of (possibly very long) curves to a family of curves of controlled length. In order to prove existence of a tree-foliation by $L$-short $2$-spheres in Section \ref{sec: L-short} we develop a certain combinatorial analog of the Mean Curvature Flow that works by deforming a given mean convex surface along certain minimal $2$-discs in a way that decreases the area by a definite amount after each deformation.

\subsection{Acknowledgments} Y. L. was supported by NSERC Discovery grant. R.R. was supported by NSERC Discovery grant. R. R. gratefully acknowledges the support from the Princeton IAS Summer Collaborators program in 2024 and  the Simons Laufer Mathematical Sciences Institute (formerly MSRI) in Berkeley, California, during the Fall 2024 semester, where part of this paper was completed. Part of this paper was completed while Y. L. and R. R. were at the Metric Geometry trimester program at the Hausdorff Research Institute for Mathematics; they are grateful to the Institute for its hospitality. D.M. was partially supported by NSF grant DMS-1910496.  Part of this paper was written while R. R. was on a sabbatical leave. During this time she was partially supported by the Simons Foundation International Award SFI-MPS-SFM-00006548.

\section{$3$-manifolds non-diffeomorphic to the $3$-sphere} \label{non-sphere}

We first observe that is enough to prove Theorem \ref{thm:mainB} for orientable manifolds.
If $(M^3,g)$ is non-orientable and $\gamma$ is a short geodesic in the orientable double cover of $M$, then
the image of $\gamma$ under the covering map is a short geodesic in $(M,g)$. 
Suppose $(M^3,g)$ is a closed orientable 3-manifold with  scalar curvature $R\geq 6$. By the topological classification of 3-manifolds with positive scalar curvature  (\cite{SY79}, \cite{GrLa83}, \cite{perelman2002entropy},
\cite{perelman2003finite},
\cite{perelman2003ricci})
$M$ must be diffeomorphic to
a connect sum of 
$$(S^3/\Gamma_1)\#\cdots\#(S^3/\Gamma_k)\# (S^2\times S^1)\#\cdots \#(S^2\times S^1)$$
where $\Gamma_1,\ldots,\Gamma_k$ are finite subgroups of $SO(4)$. Thus, assuming $M$ is non-diffeomorphic to the 3-sphere, it implies that $M$ is either reducible or a spherical space form $S^3/\Gamma$, for some nontrivial $\Gamma$. We consider the former case first: 

\begin{theorem}
Suppose $(M^3,g)$ is a closed reducible 3-manifold with scalar curvature $R_g\geq 6$.
Then $M$ contains a non-trivial closed geodesic of Morse index at most one and length at most
$\frac{400\pi}{3}$.
\end{theorem}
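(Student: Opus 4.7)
\noindent\emph{Proof proposal.} The plan is to exploit the essential $2$-sphere guaranteed by reducibility and to produce a short closed geodesic via one-parameter min-max on the free loop space.

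Since $M$ is reducible, it contains an embedded $2$-sphere that is essential in $\pi_2(M)$. Applying area minimization in the isotopy class of this sphere (Meeks--Simon--Yau) yields a stable embedded minimal $2$-sphere $\Sigma \subset M$ still representing a non-trivial class in $\pi_2(M)$. The stability inequality combined with Gauss--Bonnet and the hypothesis $R_g \geq 6$ gives the classical area bound $\Area(\Sigma) \leq \tfrac{4\pi}{3}$, and a standard eigenvalue/comparison argument gives the intrinsic diameter bound $\diam_\Sigma(\Sigma) \leq \tfrac{2\pi}{3}$ (see for instance \cite{LiMa}). These are exactly the $\Lambda_0 = 6$ cases of the general stable-minimal-sphere estimates that the scalar-curvature hypothesis makes available.

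Next, fix a basepoint $p \in \Sigma$ and construct a one-parameter family $\{\gamma_t\}_{t\in[0,1]}$ of piecewise smooth loops on $\Sigma$ based at $p$, with $\gamma_0$ and $\gamma_1$ constant at $p$, that sweeps $\Sigma$ out and thus generates $\pi_2(\Sigma, p)$. Quantitative sweepout constructions on Riemannian $2$-spheres with controlled intrinsic diameter and area (in the spirit of \cite{LNR}) produce such a family with $\length(\gamma_t) \leq \tfrac{400\pi}{3}$ for every $t$. Since $[\Sigma]$ is non-trivial in $\pi_2(M,p)$, and since the natural isomorphism $\pi_1(\Omega M, \Omega_0 M) \cong \pi_2(M, p)$ (coming from the path-loop fibration, split by the inclusion of constant loops) sends the homotopy class of $\{\gamma_t\}$ to $[\Sigma]$, the family is non-contractible rel $\Omega_0 M$.

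Finally, apply Birkhoff's curve-shortening process to $\{\gamma_t\}$, equivalently the negative gradient flow of the energy on the free loop space as in \cite{bott1982}. Were every $\gamma_t$ to flow to a constant loop, the homotopy class in $\pi_1(\Omega M, \Omega_0 M)$ would vanish, contradicting the non-triviality of $[\Sigma]$. Hence the min-max critical level is realized by a non-trivial closed geodesic $\gamma \subset M$ of length at most $\tfrac{400\pi}{3}$, and, being produced by one-parameter min-max on the free loop space, $\gamma$ has Morse index at most $1$.

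The main obstacle is the sharp sweepout bound in the middle step: converting intrinsic control on $\Sigma$ (area $\leq 4\pi/3$, intrinsic diameter $\leq 2\pi/3$) into a based sweepout whose every loop is shorter than $\tfrac{400\pi}{3}$. The remaining ingredients---existence of the essential stable minimal sphere via Meeks--Simon--Yau, the identification of the homotopy class, and the Morse index bound---are standard once the quantitative sweepout is in hand.
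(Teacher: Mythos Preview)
Your proposal is correct and follows essentially the same route as the paper: Meeks--Simon--Yau gives a stable minimal $2$-sphere with $\Area \leq \tfrac{4\pi}{3}$ and intrinsic diameter $\leq \tfrac{2\pi}{3}$, the LNR sweepout bound $200d\max\{1,\log(\sqrt{A}/d)\}$ evaluates to $\tfrac{400\pi}{3}$, and Morse theory on the free loop space yields the geodesic with index $\leq 1$. The only cosmetic difference is that the paper phrases the homotopical nontriviality as ``$\Sigma$ does not bound a $3$-ball, hence by the Poincar\'e Conjecture is not contractible in $M$'' rather than invoking $\pi_2(M)$ directly.
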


\begin{proof}
    Since $M$ is reducible there exists an embedded sphere $S$ in $M$
that does not bound a 3-ball. By \cite{MeSiYa82} we can minimize in the isotopy class of $S$ to obtain a stable minimal 2-sphere $\Sigma$. By classical area and diameter estimate for stable minimal surfaces in positive scalar curvature, $e.g,$ \cite[Proposition A.1]{MaNe12} and  \cite[Corollary 2.4]{LiMa}, we have that $\Sigma$ has area $\Area(\Sigma) \leq \frac{4\pi}{3}$ and intrinsic diameter $\diam(\Sigma) \leq \frac{2\pi}{3}$.

By \cite[Main Theorem A]{LNR} there exists a sweepout of $\Sigma$ by 
a 1-parameter family of closed curves $\{F(t) \}_{t \in [0,1]}$,
$F: ([0,1], \partial [0,1]) \rightarrow (\Omega \Sigma, \Omega_0 \Sigma) \subset
(\Omega M, \Omega_0 M)$
of length at most 
\begin{align*}
    L(F(x)) \leq 200d \max\left\{1, \log \Big(\sqrt{\frac{A}{d}}\Big)\right\} \leq \frac{400\pi}{3}.
\end{align*}
Here we use notation $L(\gamma)$ to denote the length of $\gamma$.

Since $\Sigma$ does not bound a 3-ball it follows by Poincare 
Conjecture that $\Sigma$ is not contractible in $M$ and hence
the family $\{F(x)\} $ is not contractible in $(\Omega M, \Omega_0 M)$. Thus, by \cite{bott1982}, it follows that there exists a closed geodesic of index less than or equal to
$1$ of length at most $\frac{400\pi}{3}$.
\end{proof}

If $M$ is a spherical space form, then the problem reduces to the case when $M\cong S^3$,
by mapping a closed geodesic from the universal cover.

\section{Existence of Morse foliation by $L$-short spheres} \label{sec: L-short}

In this section, we will show how to foliate a 3-sphere with 2-spheres that will have a special geometric property we call $L$-shortness. This property will be used in the next section
to construct a two-parameter family of short closed curves.


\begin{definition}[$L$-Shortness]

Let $\Sigma \subset M^3$ be an embedded surface.
We will say that $\Sigma$ is $L$-short in $M$
if for every closed curve $\gamma: [0,1] \rightarrow \Sigma$ 
with $\gamma(0) = \gamma(1)$
there exists a smooth family of arcs $\{\alpha_t:[0,1] \rightarrow M \}_{t \in [0,1]}$ of length at most $ L$, such that
\begin{itemize}
    \item $\alpha_t(0) = \gamma(0)$;
    \item $\alpha_0 = \alpha_1$ is a constant curve;
    \item $\alpha_{t}(1) = \gamma(t)$.
\end{itemize}
\end{definition}

\begin{remark}
Note that in the definition of $L$-shortness, the interiors of arcs $\alpha_t$ are not required to lie in $\Sigma$.
\end{remark}

We will also need the following notion of tree-foliation, analogous to Definition 3.6 in \cite{LiMa}.

\begin{definition} \label{def: tree-foliation}
 Let $U \subset M^3$ be a subset with $\partial U$ a disjoint
 union of embedded spheres. A family of surfaces $\{\Sigma_x\}_{x \in T}$ is called
 a tree foliation of $U$ if there exist a tree $T$ and continuous functions $f: M \rightarrow T$
 and $g:T \rightarrow \R$, such that 
 \begin{itemize}
     \item  $g \circ f$ is a Morse function and
$T$ is the corresponding Reeb graph (that is, $T = M / \sim$ with $x \sim y$ if
$x$ and $y$ belong to the same connected component of a fiber of $g \circ f$) ;
        \item for each connected component 
$\Sigma$ of  $\partial U$ we have that $\Sigma = f^{-1}(v)$ for a vertex $v$ of degree $1$;
\item  for each edge $E \subset T$ we have that $\{ f^{-1}(t) = \Sigma_t \}_{t \in E^{\circ}}$
        is a smooth family of embedded $2$-spheres.
 \end{itemize}
 \end{definition}

Observe that the following properties of a tree foliation 
follow from the definition:
    \begin{enumerate}
        \item $T$ has vertices of degree $1$ and $3$;
        \item pre-image of a vertex of degree $1$ is a point or a connected component of $\partial U$;
        \item pre-image of a vertex $v$ of degree $3$ satsifes $ f^{-1}(v) = S_1 \cup S_2$,
        where $S_i$ is homeomorphic to a sphere, $S_1 \cap S_2 = \{p\}$
        and $f^{-1}(v) \setminus p$ is smooth and embedded.
        \item For a vertex $v$ of degree $3$ and each edge $E \subset T$  as $t$ converges to $v \in \partial E$
        there is smooth graphical convergence of $\Sigma_t$ to a subset of $f^{-1}(v)$ away from the singular point 
        $p \in f^{-1}(v)$.
    \end{enumerate}


The main result of this section is the following theorem.

\begin{theorem} \label{thm: L-short foliation}
    Let $M^3$ be a 3-sphere with scalar curvature $R\geq 6$.
    There exists a tree foliation $\{\Sigma_t \}$ of $M$,
    such that each $\Sigma_t$ is $L$-short
    for an 
    $L<4500$.
\end{theorem}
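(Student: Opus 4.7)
The plan is to construct the required tree-foliation by performing a discrete analog of Mean Curvature Flow with Surgery on an initial tree-foliation. I would begin by invoking the result of \cite{LiMa}, which produces a tree-foliation $\{\Sigma_t^0\}$ of $M$ by $2$-spheres of area $\leq C_1/\Lambda_0$ and \emph{ambient} diameter $\leq C_2/\sqrt{\Lambda_0}$. As emphasized in the introduction, the intrinsic diameter of these leaves can still be uncontrolled due to long thin ``fingers''; however, because $L$-shortness allows the arcs $\alpha_t$ to leave $\Sigma_t$ and use the ambient geometry, the uniform ambient-diameter bound is the correct thing to leverage.

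Next, I would encode the failure of $L$-shortness geometrically as the presence of a ``bad neck'' on some leaf $\Sigma_t$: if a closed curve in $\Sigma_t$ admits no synchronous null-homotopy in $M$ through arcs of length $\leq L$, then since $\Sigma_t$ sits in an ambient ball of small radius, it must separate a region of $\Sigma_t$ whose ``base'' is a short embedded loop in $M$. Across such a loop one spans a minimal $2$-disc whose area is controlled by the stable Schoen--Yau-type bound $\leq 8\pi/\Lambda_0$ and, crucially, is strictly smaller than the ``finger'' it caps off by a quantifiable deficit $\delta = \delta(\Lambda_0)>0$. The surgery consists of cutting $\Sigma_t$ along the neck and attaching two parallel copies of the minimal disc, while simultaneously deforming the nearby leaves $\Sigma_{t'}$ so that the family remains a continuously varying tree-foliation --- the surgery introducing one additional degree-$3$ vertex in the Reeb tree.

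Since each surgery drops total area by at least $\delta$ and the initial total area is a priori bounded, only finitely many surgeries can occur and the process terminates after a uniformly bounded number of steps. On the terminal family no bad neck exists, so each leaf is $L$-short: a synchronous null-homotopy of any $\gamma \subset \Sigma_t$ can then be assembled by combining the sweepouts of \cite{LNR} along the bounded-diameter ``trunk'' of $\Sigma_t$ with short ambient shortcuts across the surgery-introduced necks. The main obstacle --- and where most of the technical work lies --- is the quantitative surgery step: pinning down a definition of ``bad neck'' whose existence is forced by failure of $L$-shortness, extracting a minimal disc whose area provably beats the removed region by a definite amount, and then tracking the constants through each of the three ingredients (initial \cite{LiMa} foliation, area decrease per surgery, and final sweepout from \cite{LNR}) so that the terminal length bound lands below $4500$.
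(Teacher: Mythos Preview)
Your overall architecture---a discrete flow with surgery, area decrease for termination, and $L$-shortness assembled from \cite{LNR} sweepouts plus ambient shortcuts---is in the right spirit, but the central surgery step contains a real gap.

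You assert that whenever a leaf $\Sigma_t$ fails to be $L$-short, one can locate a ``bad neck'' spanned by a minimal disc whose area is smaller than the finger it caps by a definite $\delta=\delta(\Lambda_0)>0$. This is not justified and, as stated, appears false: a long thin finger can have arbitrarily small area, so capping its base by a minimal disc need not decrease area at all, let alone by a uniform amount. The failure of $L$-shortness is a statement about length/diameter, not area, and there is no direct mechanism converting it into a quantified area deficit. This is precisely why the paper does \emph{not} proceed this way.

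The paper decouples the two issues. The area decrease comes from a separate discrete-MCF step (Proposition~\ref{surgery}): one replaces $\Sigma\cap B_{r_0}(p)$ by an isotopy minimizer, and a compactness argument guarantees a drop $\geq\varepsilon_0$ as long as $\Sigma$ is varifold-far from the finite set of stationary configurations in the given geometrically prime region (so $\varepsilon_0$ depends on $M$, not just $\Lambda_0$). This step may \emph{increase} the intrinsic diameter. The diameter is then restored, after the fact, by cutting along short level-sets of the intrinsic distance function (coarea gives curves of length $\leq 2\pi$) and performing neck-pinches along the resulting area-minimizing discs---an operation that is only shown not to increase area, not to decrease it. The inductive invariant is ``diameter $\leq 14.5$ and area decreasing,'' and $L$-shortness of every intermediate leaf is tracked through both operations via Propositions~\ref{L-short in a ball} and~\ref{prop: neck-pinching L-short}. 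Your proposal conflates the area-decreasing mechanism with the diameter-repair surgery; separating them is what makes the argument go through and what allows the constants to be pinned down.
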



\subsection{Area and diameter control implies $L$-shortness}
Using the following result from \cite{LNR}, we show that an area and diameter upper bound for a 2-sphere implies it must be $L$-short.

\begin{theorem} \label{LNR}
    Suppose $\Sigma$ is a Riemannian 2-sphere
    or a $2$-disc
    of area $\Area (\Sigma) \leq A$ and diameter $\diam(\Sigma) \leq d$ and $\gamma_0, \gamma_1$ are two paths
    connecting points $p, q \in \Sigma$. Then there exists a homotopy $\gamma_t$
    from $\gamma_0$ to $\gamma_1$ with length
 $L(\gamma_t) \leq 2(L(\gamma_1)+ L(\gamma_0))+ 200d \max\{1, \log(\frac{\sqrt{A}}{d})\}$.
\end{theorem}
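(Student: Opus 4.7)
The plan is to reduce the theorem to the construction of a short ``contraction sweepout'' of $\Sigma$ based at a fixed basepoint, and then assemble the desired homotopy by a standard null-homotopy trick. Fix $p$ to be the common starting point of $\gamma_0$ and $\gamma_1$. The core technical step is to produce a continuous family $\{c_x : [0,1] \to \Sigma\}_{x \in \Sigma}$ with $c_x(0)=p$, $c_x(1)=x$, and
\begin{equation*}
\sup_{x \in \Sigma} L(c_x) \;\leq\; L_0 \;:=\; C\,d\,\max\{1,\log(\sqrt{A}/d)\}
\end{equation*}
for a universal constant $C$. Given such a sweepout, I would build the homotopy from $\gamma_0$ to $\gamma_1$ either by null-homotoping the loop $\eta = \gamma_0 \cdot \bar{\gamma_1}$ based at $p$ (through loops $\eta_s = \eta|_{[0,s]} \cdot c_{\eta(s)}^{-1}$) and reinterpreting the null-homotopy as a path homotopy, or directly by curves of the form
\begin{equation*}
\gamma^{(s)} \;=\; \gamma_0|_{[0,s]} \cdot c_{\gamma_0(s)}^{-1} \cdot c_{\gamma_1(s)} \cdot \gamma_1|_{[s,1]},
\end{equation*}
with appropriate reparametrization near $s=0,1$ so that $\gamma^{(0)}=\gamma_0$ and $\gamma^{(1)}=\gamma_1$. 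Each intermediate curve has length at most $L(\gamma_0)+L(\gamma_1)+2L_0$, and the factor of $2$ in front of $L(\gamma_0)+L(\gamma_1)$ in the stated bound absorbs the bookkeeping cost of the reparametrization (and of possibly traversing portions of $\gamma_0, \gamma_1$ more than once to collapse the side trips at the endpoints of the homotopy).

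To construct the sweepout I would apply recursive area-bisection. The key geometric input is a linear isoperimetric--type lemma: on any disc of intrinsic diameter $\leq d$ there exists an arc of length $\lesssim d$ cutting it into two sub-discs of areas at most a fixed fraction of the original (say $3/4$). Iterating yields a binary tree of cuts of depth $\sim \log_2(A/d^2)$ terminating at cells of area $\lesssim d^2$. The path $c_x$ from $p$ to $x$ is then obtained by following the tree of cuts from the cell containing $p$ down to the cell containing $x$, paying $O(d)$ per level and an extra $O(d)$ for a final geodesic segment inside the terminal cell, for a total length $O(d\log(\sqrt{A}/d))$ matching $L_0$.

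The main obstacle is making the family $\{c_x\}$ genuinely \emph{continuous} in $x$ rather than merely existing pointwise: the tree-following rule is discontinuous across each bisecting arc, since $x$ on opposite sides of a cut is assigned to different branches. I would handle this by introducing thin collar neighbourhoods of each bisecting arc and interpolating between the two branch-assignments through explicit homotopies supported in those collars, at the cost of doubling the number of arc-traversals in some $c_x$ (which is absorbed in the universal constant $C$). A secondary subtlety is that the bisecting arcs at each recursive stage must be chosen with some care so that the concatenation produces a genuinely smooth family of arcs, which can be arranged either by a small smoothing near cut-endpoints or by enlarging the class of admissible piecewise-smooth homotopies. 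Once the sweepout and its continuity are established, the length bookkeeping and the assembly of $\gamma_t$ are essentially mechanical.
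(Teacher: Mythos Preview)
Your approach has a genuine topological gap in the sphere case. A continuous family $\{c_x\}_{x \in \Sigma}$ with $c_x(0)=p$ and $c_x(1)=x$ defines a homotopy $H(x,t)=c_x(t)$ from the constant map $\Sigma \to \{p\}$ to $\mathrm{id}_\Sigma$. When $\Sigma \cong S^2$ this is impossible, since $\mathrm{id}_{S^2}$ has degree $1$ and is not null-homotopic. Equivalently, you are asking for a global section of the path fibration $P_p\Sigma \to \Sigma$, $c\mapsto c(1)$; the total space is contractible, so a section would force $\mathrm{id}_\Sigma$ to factor through a point. No amount of collar-interpolation can repair the discontinuities of your tree-following rule across the bisecting arcs: the obstruction is global, not local. (On a disc the obstruction vanishes, and there your sketch is essentially a re-derivation of the cited contraction result itself.)

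The paper sidesteps this by working discretely rather than seeking a global section. It first reduces to the case where $\gamma_1$ is a minimizing geodesic, subdivides $\gamma_0$ into short arcs $\alpha_i$ with endpoints $x_i$, and joins each $x_i$ to $p$ by a \emph{minimizing} geodesic $\tau_i$ --- with no pretense that $\tau_i$ varies continuously in $x_i$. Each geodesic triangle $\tau_i \cup \alpha_i \cup \overline{\tau_{i+1}}$ bounds a disc, and the black-box result \cite[Theorem~1.2]{LNR} contracts it through based loops of length at most $L(\tau_i\cup\alpha_i)+L(\tau_{i+1})+200d\max\{1,\log(\sqrt{A}/d)\}$; Lemma~\ref{pathhomotopy} then converts each such loop contraction into a path homotopy $\tau_i \cup \alpha_i \leadsto \tau_{i+1}$. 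Chaining these slides $c_0=\gamma_0$ to $c_N=\gamma_1$ through curves satisfying the stated bound. Only finitely many discs are treated, so no continuous global family is ever needed. Your argument is repairable by excising a small disc around a point off $\gamma_0 \cup \gamma_1$ and running your construction on the complement, but as written the sphere case fails.
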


\begin{proof}
    The result follows if both $\gamma_0$ and $\gamma_1$ can be homotoped 
    to the same length minimizing geodesic between $p$ and $q$ through curves with the
    the desired length bound. Hence,
    without any loss of generality, we may assume that $\gamma_1$ is a length-minimizing geodesic.
    
    Subdivide $\gamma_0$ into $N$ small arcs $\{ \alpha_i \}_{i=0}^N$ with endpoints $\partial \alpha_i = \{x_i, x_{i+1} \}$ of length less than $ \varepsilon< \text{\rm convrad}(\Sigma)$. After a small
    perturbation we can assume that each $\alpha_i$ is a geodesic arc. Let $\tau_i$ denote a length minimizing geodesic
    from $p$ to $x_i$. Consider a sequence of curves $c_i = \tau_i \cup \bigcup_{j \geq i} \alpha_j$.
   Since $\alpha_i$, $\tau_i$ and $\tau_{i+1}$ are minimizing geodesics, they do not intersect except at the endpoints.
   Hence, they bound a disc $D_i$.
   By \cite[Theorem 1.2]{LNR}, there exists 
   a contraction of $\tau_i \cup \alpha_i \cup\tau_{i+1}$
   to a point through based loops with length increase
   bounded by $L(\tau_i \cup \alpha_i )+ L(\tau_{i+1})+ 200d \max\{1, \log(\frac{\sqrt{A}}{d})\}$. 
   By Lemma \ref{pathhomotopy} there exists a path homotopy
   from $\tau_i \cup \alpha_i$ to $\tau_{i+1}$
   through curves of length at most 
   $$\min \{L(\tau_i \cup \alpha_i ), L(\tau_{i+1}) \} + L(\tau_i \cup \alpha_i )+ L(\tau_{i+1})+ 200d \max\left\{1, \log(\frac{\sqrt{A}}{d})\right\}.$$ 
It   follows that we have a sequence of homotopies from
    $c_i$ to $c_{i+1}$, starting with $\gamma_0=c_0$ and ending with $c_N = \gamma_1$, satisfying the desired length bound.
\end{proof}

As a consequence of Theorem \ref{LNR} we have the following lemma:

\begin{lemma} \label{lemma: small spheres are L-short}
Suppose $\Sigma$ is a 2-sphere or a 2-disc in $M$
with intrinsic diameter $d$ and area $A$.
Then for every $\varepsilon>0$ $\Sigma$ is $L$-short for $L \leq 4d + 200d \max\{1, \log(\frac{\sqrt{A}}{d})\} + \varepsilon$.
\end{lemma}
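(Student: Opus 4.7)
The plan is to mimic the sweepout in the proof of Theorem \ref{LNR}: subdivide $\gamma$ into short arcs, connect each subdivision point to the basepoint by a minimizing geodesic in $\Sigma$, and use Theorem \ref{LNR} to glue successive geodesics. Fix $\varepsilon_1 > 0$ (to be chosen at the end in terms of $\varepsilon$) and a partition $0 = t_0 < \cdots < t_N = 1$ with $L(\gamma|_{[t_i, t_{i+1}]}) < \varepsilon_1$. Set $p := \gamma(0)$, $x_i := \gamma(t_i)$, and let $\tau_i$ be a length-minimizing geodesic in $\Sigma$ from $p$ to $x_i$; then $L(\tau_i) \leq d$, and $\tau_0$ and $\tau_N$ are the constant curve at $p$.

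Since $\Sigma$ is simply connected, for each $i$ the paths $\tau_i \cdot \gamma|_{[t_i, t_{i+1}]}$ and $\tau_{i+1}$ from $p$ to $x_{i+1}$ are homotopic rel endpoints, and Theorem \ref{LNR} yields a path homotopy $\{\rho_i^s\}_{s \in [0,1]}$ between them through curves of length at most
\[
2\bigl(L(\tau_i \cdot \gamma|_{[t_i, t_{i+1}]}) + L(\tau_{i+1})\bigr) + 200d \max\{1, \log(\sqrt{A}/d)\} \leq 4d + 200d \max\{1, \log(\sqrt{A}/d)\} + 2\varepsilon_1.
\]

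The family $\{\alpha_t\}$ is assembled on each $[t_i, t_{i+1}]$ in three stages. On an initial subinterval set $\alpha_t = \tau_i \cdot \gamma|_{[t_i, t]}$, which reduces to $\tau_i$ at $t = t_i$. On a final subinterval set $\alpha_t = \tau_{i+1} \cdot \overline{\gamma|_{[t, t_{i+1}]}}$, reducing to $\tau_{i+1}$ at $t = t_{i+1}$. In between, set $\alpha_t = \rho_i^{s(t)} \cdot \overline{\gamma|_{[t, t_{i+1}]}}$ with $s(t)$ increasing linearly from $0$ to $1$. The endpoint condition $\alpha_t(1) = \gamma(t)$ holds throughout. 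The transition from the initial to the middle stage is made continuous by a smooth reparametrization of $\alpha_t$ on a short $t$-window: $\tau_i \cdot \gamma|_{[t_i, t]}$ and $\tau_i \cdot \gamma|_{[t_i, t_{i+1}]} \cdot \overline{\gamma|_{[t, t_{i+1}]}}$ have the same image in $M$, so one can deform one parametrization into the other without increasing length by more than $O(\varepsilon_1)$. The middle and final stages match directly since $\rho_i^1 = \tau_{i+1}$. At each subdivision point both adjacent intervals give $\alpha_{t_i} = \tau_i$, so the family glues continuously on $[0,1]$, with $\alpha_0 = \alpha_1$ equal to the constant curve at $p$.

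Each $\alpha_t$ then has length at most $4d + 200 d \max\{1, \log(\sqrt{A}/d)\} + 3\varepsilon_1$; choosing $\varepsilon_1 < \varepsilon/3$ gives the claimed bound. The main technical obstacle is the smooth interpolation of parametrizations at the stage boundaries, where the image of $\alpha_t$ in $M$ does not change but the parametrization acquires or loses backtracking; this is routine but fiddly, and one must verify that the added arclength vanishes as $\varepsilon_1 \to 0$ and that the resulting family is smooth (rather than merely continuous) in $t$.
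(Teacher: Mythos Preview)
Your proof is correct and follows essentially the same approach as the paper: subdivide $\gamma$ into short arcs, connect the subdivision points to the basepoint $p=\gamma(0)$ by minimizing geodesics of length $\leq d$, apply Theorem~\ref{LNR} to interpolate between consecutive geodesics, and append short subarcs of $\gamma$ so that the moving endpoint traces $\gamma(t)$. The paper's write-up is terser---it simply says ``by adding a family of subarcs of $\gamma_i$'' where you spell out the three-stage assembly---but the underlying construction and the length bookkeeping are the same.
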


\begin{proof}
   Let $\gamma$ be a closed curve in $\Sigma$ and $p \in \gamma$. We will construct a family of arcs connecting $p$ to the points of $\gamma$ and satsifying the desired length bound.
   Subdivide $\gamma$ into small arcs $\gamma_i$
   by a sequence of pointa $\{p=a_0, ..., a_N =p \}$
   with $\partial \gamma_i = \{a_{i-1}, a_i\} $,
   such that 
each $\gamma_i$ has length bounded by $\delta < \frac{\varepsilon}{10}$. Consider a sequence of  geodesics $\{ \alpha_i \}_{i=1}^{N-1}$ connecting $p$ to $a_i$ and that are length minimizing in $\Sigma$. Note we have $L(\alpha^i)\leq d$. 
By Theorem \ref{LNR} there exists a homotopy $\alpha^i_t$ from 
$\alpha_{i-1} $ to $\gamma_i \cup \alpha_i$
through curves of length bounded by
\begin{align*}
 L( \alpha^i_t) & \leq 2(L( \alpha_{i-1} \cup \gamma_i) + L(\alpha_i)) + 200d \max\left\{1, \log(\frac{\sqrt{A}}{d})\right\}\\
 & \leq 4d + 2\delta + 200d \max\left\{1, \log(\frac{\sqrt{A}}{d})\right\}
\end{align*}
  By adding a family of subarcs of $\gamma_i$ connecting the endpoint $a_{i-1}$ of $\alpha^i_t$ to the points of $\gamma_i$ we obtain
  a family of curves connecting $p$ to the points 
  of $\gamma_i$ that interpolated between $\alpha^{i-1}$
  and $\alpha^i$. Together these homotopies give us the
  desired homotopy $\alpha_t$.
\end{proof}

\subsection{Surgery and $L$-shortness} To construct our desired foliation, we will need to
perform certain cut-and-paste surgeries on $2$-spheres with controlled area and diameter. 
Unfortunately, these operations do not necessarily preserve the bound on intrinsic diameter.
However, the next proposition shows how $L$-shortness can be preserved in appropriate cases.

\begin{proposition} \label{L-short in a ball}
    Let $\eps< \text{\rm convrad}(M)$\footnote{Throughout the paper, $\text{\rm convrad}(M)$ denotes the convexity radius of $M$.}. Suppose $\Sigma_1$ is an embedded sphere
    with $\diam(\Sigma_1) \leq l_0$ and $\Area(\Sigma_1) \leq l_0^2$.
    Let $B$ denote a closed ball of radius $\eps$.
    Suppose $\Sigma_2 \subset M$ is an embedded sphere, such that 
     $S = \Sigma_2 \cap B$  
     satisfies the following: 
\begin{enumerate}
    \item $\Sigma_2 \setminus \Sigma_1 \subset S$;
    \item For each connected component $D$ of $S$
    we have $\partial D \subset \partial D'$,
    where $D'$ is a connected component of $\Sigma_1 \cap B$.
\end{enumerate}
   Then $\Sigma_2$ is $L$-short for $L \leq 204 l_0 +3\varepsilon$.
\end{proposition}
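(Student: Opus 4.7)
The plan is to transfer $L$-shortness from $\Sigma_1$ to $\Sigma_2$, exploiting that the two surfaces agree outside the convex ball $B$ and that any local discrepancy inside $B$ can be spanned by short geodesic bridges.

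First, since $\diam(\Sigma_1) \le l_0$ and $\Area(\Sigma_1) \le l_0^2$, the logarithmic term in Lemma \ref{lemma: small spheres are L-short} equals $1$, giving that $\Sigma_1$ is $(204 l_0+\delta)$-short for every $\delta>0$; fix $\delta\le\varepsilon$.

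Now, given a closed curve $\gamma:[0,1]\to \Sigma_2$, I perturb it to be transverse to $\partial B$ so that $\gamma^{-1}(B)$ is a finite disjoint union of open intervals $(s_i,t_i)$, and I assume the generic case $\gamma(0)\notin B$ (other cases are routine variations: if $\gamma$ stays in $B$, use the geodesic family inside the convex ball $B$; if $\gamma(0)\in B$ but $\gamma$ exits, shift the basepoint along a short geodesic in $B$ to a point of $\Sigma_1$ and absorb the cost into the $\varepsilon$-slack). On each inner interval $(s_i,t_i)$ the endpoints $\gamma(s_i),\gamma(t_i)$ lie in $\partial D\subset\partial D'$ where $D,D'$ are the associated components of $S$ and $\Sigma_1\cap B$; by connectedness of $D'\subset B$, I select an arc $\tilde\gamma_i:[s_i,t_i]\to D'$ from $\gamma(s_i)$ to $\gamma(t_i)$. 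Define a reference closed curve $\tilde\gamma:[0,1]\to\Sigma_1$ by setting $\tilde\gamma = \gamma$ on outer intervals and $\tilde\gamma = \tilde\gamma_i$ on inner intervals; by construction $\tilde\gamma$ is a continuous closed curve on $\Sigma_1$ with $\tilde\gamma(0)=\gamma(0)$.

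Applying the $L$-shortness of $\Sigma_1$ to $\tilde\gamma$ produces a smooth family $\tilde\alpha_t$ of arcs in $M$ from $\gamma(0)$ to $\tilde\gamma(t)$ of length at most $204 l_0+\delta$, with $\tilde\alpha_0=\tilde\alpha_1$ constant. For each inner $t\in(s_i,t_i)$, both $\tilde\gamma(t)$ and $\gamma(t)$ lie in $B$, so by convexity of $B$ they are joined by a unique minimizing geodesic $\beta_t$ of length at most $2\varepsilon$, varying smoothly in $t$ and collapsing to a constant as $t\to s_i$ or $t\to t_i$; on outer intervals take $\beta_t$ to be the constant arc at $\gamma(t)$. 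Then $\alpha_t := \tilde\alpha_t \cdot \beta_t$ is a smooth family of arcs from $\gamma(0)$ to $\gamma(t)$ of length at most $204 l_0 + \delta + 2\varepsilon \le 204 l_0+3\varepsilon$, with $\alpha_0=\alpha_1$ constant. The main technical point to verify is smoothness of the concatenation across the transition times $s_i,t_i$, which follows immediately because $\beta_t$ degenerates to a constant there; the only other subtlety is the bookkeeping for $\gamma(0)\in B$, addressed by the reduction sketched above.
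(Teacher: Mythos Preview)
Your proof is correct and somewhat cleaner than the paper's. Both arguments rest on the same two ingredients—Lemma~\ref{lemma: small spheres are L-short} applied to $\Sigma_1$, and short geodesic bridges inside the convex ball $B$—but they are packaged differently. The paper discretizes $\gamma$ into short segments with endpoints $a_i$, builds for each $a_i$ an explicit short arc $\alpha_i$ to the basepoint (a curve in $\Sigma_1$ of length $\le l_0$, possibly preceded by a geodesic in $B$), and then interpolates between consecutive $\alpha_i \cup \gamma_i$ and $\alpha_{i+1}$ by invoking Theorem~\ref{LNR} (or Lemma~\ref{lemma: small spheres are L-short}) on each small triangle. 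You instead replace the inner portions of $\gamma$ wholesale by arcs $\tilde\gamma_i\subset D'\subset\Sigma_1$ to produce a single closed curve $\tilde\gamma$ on $\Sigma_1$, invoke the $L$-shortness of $\Sigma_1$ once as a black box to get the family $\tilde\alpha_t$, and then append the geodesic bridge $\beta_t$. This avoids the subdivision and the explicit case analysis; the price is that your arcs $\tilde\gamma_i$ may be arbitrarily long, but this is harmless since $L$-shortness does not depend on the length of the input curve and both endpoints of $\beta_t$ lie in $B$. Both proofs hand-wave the case $\gamma(0)\in B \setminus \Sigma_1$; the paper likewise dismisses it as ``a straightforward modification'', so your reduction sketch is on par.
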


\begin{figure}[h!]
\centering	
\includegraphics[width=75mm,scale=0.4]{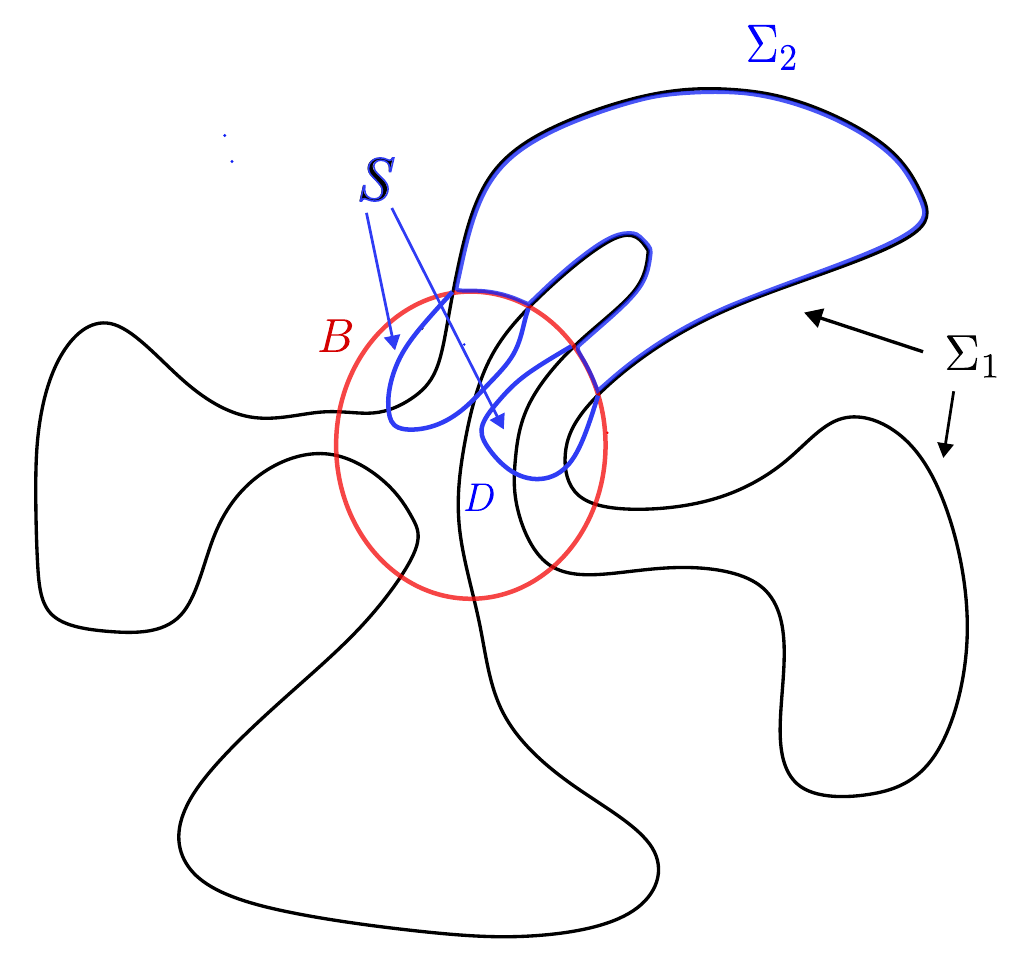}
\caption{$L$-short $\Sigma_2$ after cut-and-paste surgeries inside $B$}
\end{figure}
\begin{proof}
We will prove a slightly stronger result then $L$-shortness.
We will show that given any point $p \in \Sigma_2$ and a curve $\gamma: [0,1] \rightarrow \Sigma_2$, there exists
a family of arcs $\alpha_t$ connecting $p$ to $\gamma(t)$ and, moreover, if $\gamma(0) =\gamma(1) = \{p\}$,
then we can take $\alpha_0 = \alpha_1=\{p\}$. 
We will assume that $p  \in \Sigma_1 \cap \Sigma_2$; the case $p \in \Sigma_2 \setminus \Sigma_1 \subset B$
is a straightforward modification of the argument below. 

Let $\gamma \subset \Sigma_2$ be a path.  Subdivide $\gamma$ into segments $\gamma_i$ of length $<\delta<\eps$ and let $\{ a_i \}_{i=0}^N$ denote the endpoints
  of $\{ \gamma_i \}$, $\partial \gamma_i = \{a_i, a_{i+1} \}$, with
  $a_0 = \gamma(0), a_N= \gamma(1)$. 
  Without any loss of generality we can assume that $\gamma$ 
  intersects $\partial S$ transversally and that
  $\gamma \cap \partial S \subset \{ a_i\}$. We now construct
  a collection of arcs $\alpha_i$ from $p$ to $a_i$. We consider the following two cases:
  \begin{enumerate}
      \item $a_i$ is contained
  in the interior of $\Sigma_2 \setminus S \subset \Sigma_1$; we connect $a_i$ to $p$ by a curve $\alpha_i \subset \Sigma_1$ of length $\leq \diam(\Sigma_1) \leq l_0$ in $\Sigma_1$; \label{case1}
  \item  $a_i$ is contained in a connected component $D$ of $S$, see Figure \ref{Fig16}. Let 
  $b_i$ be a point in $ D \cap \partial B \in \Sigma_1$. We connect $a_i$ to 
point $b_i$ by a geodesic (in the ambient metric) $\tau_i \subset B$ of length at most $\varepsilon$; we connect $b_i$ to $p$ by a curve $\beta_i$ of length $\leq \diam(\Sigma_1) \leq l_0$ in $\Sigma_1$; we set $\alpha_i = \tau_i \cup \beta_i$. \label{case2}
  \end{enumerate}

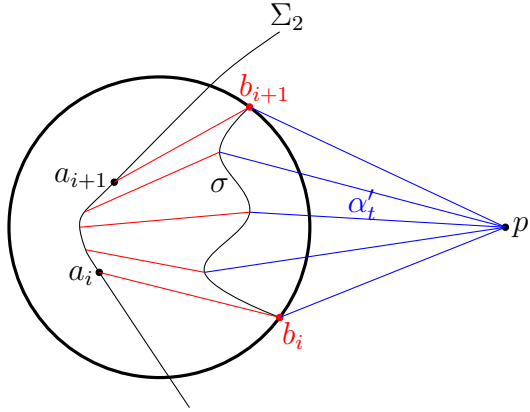
\begin{figure}[h!] 
    \centering
\begin{tikzpicture}[scale=0.2]

\draw [very thick] (-10,0) circle (10);

\draw plot[very thick, smooth ,tension=.5] coordinates { (-2,13) (-6,10) (-13,3) (-15,1) (-15.3,0) (-14.9,-1.5)    (-14,-3)  (-8,-12)};

\node at (-1.5,14) {$\Sigma_2$}; 

\draw plot[very thick, green, smooth ,tension=.65] coordinates { (-4,8) (-6,5) (-4,1) (-7,-3) (-2,-6)};

\node at (-6,2.8)  {$\sigma$};
\node[red] at (-3,9.2) {$b_{i+1}$};
\node[red] at (-1.1,-7.2)  {$b_i$};

\node at (-15.2,3.2) {$a_{i+1}$};
\node at (-15.2,-3.2)  {$a_i$};
\node[blue] at (3.5,1.5) {$\alpha'_t$};

\node at (14,0) {$p$};

\filldraw[red] (-4,8)
circle[radius=6pt];
\filldraw[red] (-2,-6)
circle[radius=6pt];
\filldraw[black] (-13,3)
circle[radius=6pt];
\filldraw[black] (-14,-3)
circle[radius=6pt];

\filldraw[black] (13,0)
circle[radius=6pt];

\draw [blue] (13,0) --(-2,-6);
\draw [blue] (13,0) --(-4,8);
\draw [blue] (13,0) -- (-6,5);
\draw [blue] (13,0) -- (-4,1);
\draw [blue] (13,0) --  (-7,-3);
\draw [red] (-4,8)--(-13,3);
\draw [red] (-6,5) -- (-15,1) ;
\draw [red] (-2,-6) --  (-14,-3) ;
\draw [red] (-4,1) --   (-15.3,0);
\draw [red] (-7,-3) --  (-14.9,-1.5)  ;

\end{tikzpicture}

\caption{Case 2: $a_i$ is contained in a connected component $D$ of $S$}
\label{Fig16}
 \end{figure}

In case (\ref{case2}), if $a_i \in \partial B$, then we set $b_i=a_i$.
  
We now claim that there exists a homotopy of arcs with fixed endpoints $p$ and $a_{i+1}$
interpolating between $\alpha_i \cup \gamma_i$ and $\alpha_{i+1}$ through curves of length
$\leq 204 l_0 +2 \delta$. 
Putting these deformations together we obtain a homotopy from $\alpha_0$ to $\alpha_1$
with desired properties.

If $a_i, a_{i+1} \in \Sigma_1$, then existence of desired homotopy follows from Theorem \ref{LNR}.

Suppose $a_i, a_{i+1}$ lie in a connected component $D$ of $S =\Sigma_2 \cap B$ and 
let $D'$ denote a connected component of $\Sigma_1 \cap B$ with $\partial D \subset \partial D'$.
Let $\sigma \subset D'$ be a curve connecting $b_i$ to $b_{i+1}$. Note that
we don't have a bound on the length of $\sigma$.
By Lemma \ref{lemma: small spheres are L-short} there exists a family of 
arcs $\alpha_t' \subset \Sigma_1$ of controlled length interpolating between
$\beta_i$ and $\beta_{i+1}$ with endpoints in $\sigma$. Since $\sigma$
lies inside the ball $B$ there exists a family of arcs connecting
$\sigma$ to $\alpha_{i+1}$ through curves of length $\leq 2 \varepsilon$.
This gives the desired family of arcs $\alpha_t$.
\end{proof}

\begin{definition}
    Let  $V \subset M$ be a domain with smooth boundary
    and $\Sigma \subset \partial V$ be an embedded 2-sphere.
    Let $\{ D_i \}_{i=1}^k $
    be a collection of disjoint embedded 2-discs in $V$, satisfying:
    \begin{itemize}
        \item $Int(D_i) \subset M \setminus \Sigma$;
        \item  $\partial D \subset \Sigma$ is a disjoint union of embedded closed
         curves.
    \end{itemize}
 A family of embedded surfaces $\{ \Sigma_t \}$
 is called an $\varepsilon$-neck-pinching along $\{ D_i \}$
 if it is obtained from $\Sigma$ by deforming the small cylinders $N_{\varepsilon}(\partial D_i) \cap \Sigma$
 along $D_i$ until they pinch. 

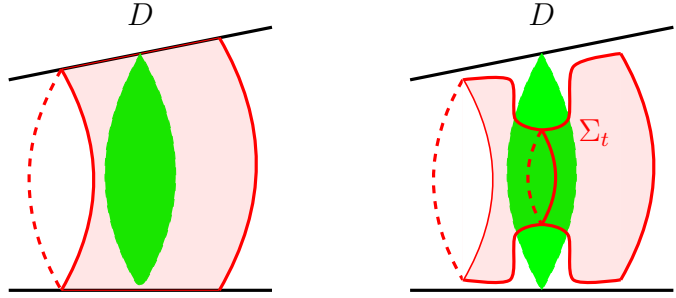
\begin{figure}[h!] 
    \centering
\begin{center}
\begin{minipage}{.2\textwidth}
\begin{tikzpicture}[scale=0.35] 
\draw[very thick] (4,9) -- (9,10) -- (14,11);
\draw[very thick] (4,1) -- (9,1) -- (14,1); 

\draw[very thick, red,dashed] (6,9.4) to [bend right =30] (6,1);
\draw[very thick, red]  (6,9.4) to [bend left =30] (6,1) ;

\draw[red]  (6,9.4) -- (12,10.6);
\draw[red]  (6,1)  -- (12,1);

\draw[very thick, red]  (12,10.6) to [bend left =30] (12,1) ;

\filldraw[green,dashed, rounded corners] (9,10) to [bend right =30] (9,1) to [bend right =30] (9,10)  ;

\node at (9,11.5)  {$D$};

\begin{scope}
\clip  (6,9.4) to [bend left =30] (6,1)  to (12,1) to [bend right =30]  (12,10.6)  to (6,9.4);


\fill[red, opacity=0.1] (6,9.4) to [bend left =30] (6,1) to (12,1) to [bend right =30] (12,10.6) to (6,9.4);
\end{scope} 
\end{tikzpicture}
\end{minipage}
\begin{minipage}{.2\textwidth}
	\begin{tikzpicture}
 \end{tikzpicture}
\end{minipage}
\begin{minipage}{.2\textwidth}
\begin{tikzpicture}[scale=0.35] 
\draw[very thick] (4,9) -- (9,10) -- (14,11);
\draw[very thick] (4,1) -- (9,1) -- (14,1);

\draw[very thick, red,dashed] (6,9) to [bend right =30] (6,1.4);
\draw[very thick, red]  (6,9) to [bend left =30] (6,1.4) ;


\draw[very thick, red]  (12,10) to [bend left =30] (12,1.4) ;

\filldraw[green,dashed] (9,10) to [bend right =30] (9,1) to [bend right =30] (9,10)  ;

\node at (9,11.5)  {$D$};

\draw[very thick, red] plot [ smooth, tension=.5]   coordinates{(6,9) (7.8,9) (8,7.5) (9,7.1)  (10,7.5) (10.2,9.5) (12,10) };
\node[red] at (11,7)  {$\Sigma_t$};
\draw[very thick, red] plot [ smooth, tension=.5]   coordinates{(6,1.4) (7.8,1.4) (8,3.1) (9,3.5) (10,3.1) (10.2,1.4) (12,1.4)};

\draw[dashed] [very thick,red] (9,7.1) to [bend right =30] (9,3.5);
\draw [very thick,red] (9,7.1) to [bend left =30] (9,3.5);

\path[name path=1]   plot [ smooth, tension=.5]   coordinates{ (6,9) (7.8,9) (8,7.5) (9,7.1)  (10,7.5) (10.2,9.5) (12,10) };
\path[name path=2]  plot [ smooth, tension=.5]   coordinates{(6,1.4) (7.8,1.4) (8,3.1) (9,3.5) (10,3.1) (10.2,1.4) (12,1.4)};

\path[name path=3]  (12,10) to [bend left =30] (12,1.4);
\path[name path=4] (6,9) to [bend left =30] (6,1.4);
\path[name path=5]  (12,10) to  (12,1.4);
\path[name path=6] (6,9) to (6,1.4);

\tikzfillbetween[of=1 and 2]{red, opacity=0.1};
 \tikzfillbetween[of=3 and 5]{red, opacity=0.1};
\tikzfillbetween[of=4 and 6]{white};

\begin{scope}
\clip  (6,9.4) to [bend left =30] (6,1)  to (12,1) to [bend right =30]  (12,10.6)  to (6,9.4);



\end{scope}

\end{tikzpicture}

\end{minipage}

\end{center}
\caption{$\varepsilon$-neck-pinching along a disk $D$}
 \end{figure}
 
More precisely, given an $\varepsilon>0$, we realize $\varepsilon$-neck-pinching as a tree-foliation $\{ \Sigma_t \}_{t \in T}$ of a region $V'$,
 $N_{\varepsilon/2}(\Sigma \cup \bigcup D_i)\cap V \subset V' \subset N_{\varepsilon}(\Sigma \cup \bigcup D_i)\cap V$. The set of vertices of $T$ can be decomposed $\mathcal{V}=\{v_j\}_{j=0}^{k+1} \cup \{w_l \}_{l=1}^k$, such that:
 \begin{enumerate}
     \item Vertices $\{v_j\}_{j=0}^k$ have degree $1$, $\Sigma_{v_0}=\Sigma$
     and $\bigsqcup_{j=1}^{k+1} \Sigma_{v_j} = \partial V' \setminus \Sigma$;
     \item Vertices $\{w_l \}_{l=1}^k$ have degree $3$.
 \end{enumerate}
 Moreover, if $\Sigma$ is minimal or mean convex, and $D_i$ are minimal embeddings,
 then $\partial V' \setminus \Sigma$ is a collection of embedded spheres with
 mean curvature pointing inside $V \setminus V'$.
\end{definition}

The next proposition shows how $L$-shortness behaves under $\varepsilon$-neck-pinching.

\begin{proposition} \label{prop: neck-pinching L-short}
    Suppose $\Sigma$ is an $L$-short embedded sphere and $\{ D_i\}_{i=1}^k$
 a collection of disjoint embedded discs with $\partial D_i \subset \Sigma$, satisfying:
\begin{enumerate}
    \item[(a)] $\length(\partial D_i)<l_1$,
    \item[(b)] $\diam(D_i) \leq l_2$,
    \item[(c)] $\Area(D_i) \leq l_2^2$,

\end{enumerate}
Suppose $\{ \Sigma_t \}$ is an $\varepsilon$-neck-pinching family along $\{ D_i \}$.
    Then $\Sigma_t$ is $L'$-short for $L' \leq L + l_1 + 204 l_2+ 4\varepsilon$.
\end{proposition}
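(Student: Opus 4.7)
The plan is to reduce $L'$-shortness of $\Sigma_t$ to the $L$-shortness of $\Sigma$ combined with the $L$-shortness of each disc $D_i$. By Lemma~\ref{lemma: small spheres are L-short} applied with $d=l_2$ and $A\leq l_2^2$ (so $\sqrt{A}/d\leq 1$ and the maximum in the formula equals $1$), each $D_i$ is $L_0$-short with $L_0 \leq 204 l_2 + \varepsilon$.

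Given a closed curve $\gamma:[0,1]\to \Sigma_t$ based at $p$, first perturb $\gamma$ to meet the necks $N_\varepsilon(\partial D_i)$ transversally; this decomposes $\gamma$ into outer arcs in $\Sigma \setminus \bigcup_i N_\varepsilon(\partial D_i)$, inner excursions $\sigma_j \subset D_{i(j)}$ with endpoints $r_j, r_j' \in \partial D_{i(j)}$, and transitional arcs through the necks of ambient length $\leq \varepsilon$. Assume without loss of generality that $p$ is in the outer part (the case $\gamma \subset D_i$ follows directly from the $L_0$-shortness of $D_i$). Construct a shadow closed curve $\tilde\gamma \subset \Sigma$, parametrized by the same $s\in[0,1]$ as $\gamma$, by keeping outer arcs unchanged, replacing each $\sigma_j$ by the shorter boundary arc $\tau_j \subset \partial D_{i(j)}$ from $r_j$ to $r_j'$ (of length $\leq l_1/2$), and bridging across each neck by a short arc in $\Sigma$ of length $\leq \varepsilon$. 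Apply the $L$-shortness of $\Sigma$ to $\tilde\gamma$ to obtain continuous arcs $\tilde\alpha_s:[0,1]\to M$ from $p$ to $\tilde\gamma(s)$ of length $\leq L$, with $\tilde\alpha_0=\tilde\alpha_1$ constant.

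For each excursion $j$, apply the $L_0$-shortness of $D_{i(j)}$ to the closed curve $\sigma_j\cup\tau_j^{-1}$ based at $r_j$ to obtain arcs $\mu_v^{(j)}$ of length $\leq L_0$ with $\mu_0^{(j)}=\mu_1^{(j)}$ constant at $r_j$. For $s$ outside all excursions, define $\alpha_s := \tilde\alpha_s$, together with a short ambient bridging arc of length $\leq \varepsilon$ from $\tilde\gamma(s)\in\Sigma$ to $\gamma(s)\in\Sigma_t$ (trivial for $s$ in an outer arc, $\leq \varepsilon$ for $s$ in a transitional arc). For $s$ inside an excursion $j$, define $\alpha_s := \tilde\alpha_s \cup \rho_s$, where $\rho_s$ first traverses $\tau_j$ from $\tau_j(s)$ back to $r_j$ (length $\leq l_1/2$) and then follows $\mu_{v(s)}^{(j)}$ from $r_j$ to $\sigma_j(s)$ (length $\leq L_0$), with $v(s)$ the parameter in the $\sigma_j$-portion of $\sigma_j\cup\tau_j^{-1}$ corresponding to $\sigma_j(s)$. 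This yields a length bound of at most $L + l_1/2 + L_0 + 2\varepsilon \leq L + l_1 + 204 l_2 + 4\varepsilon$.

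The main obstacle is ensuring continuity of $\{\alpha_s\}$ at the excursion boundaries. Continuity at the entry $s=s_j$ is automatic since both the $\tau_j$-traverse in $\rho_{s_j}$ and $\mu_0^{(j)}$ are constant at $r_j$, so $\alpha_{s_j} = \tilde\alpha_{s_j}$ glues seamlessly to the preceding outer regime. At the exit $s=s_j'$, however, $\rho_{s_j'}$ is a nontrivial loop at $r_j'$ (the $\tau_j$-traverse has length $\leq l_1/2$ and $\mu_{1/2}^{(j)}$ goes from $r_j$ to $r_j'$), creating a mismatch with the outer-mode arc $\tilde\alpha_{s_j'}$. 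This is repaired by inserting a short transition interpolation over $s$ in a small interval just after $s_j'$: use the second half of $\mu_v^{(j)}$ (which traces $\tau_j^{-1}$ back to a constant at $r_j$) synchronized with the motion of $\tilde\alpha_u$ along the outer portion of $\tilde\gamma$ immediately past the excursion. All arcs in this interpolation remain within the bound $L + l_1 + 204 l_2 + 4\varepsilon$. Finally, at $s\in\{0,1\}$ no excursion is active and $\tilde\alpha_0 = \tilde\alpha_1$ is constant at $p$, so $\alpha_0 = \alpha_1$ is also constant, completing the verification that $\Sigma_t$ is $L'$-short.
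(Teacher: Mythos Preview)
Your proof is correct and follows essentially the same strategy as the paper's: decompose $\gamma\subset\Sigma_t$ into ``outer'' pieces that lie (after a tiny projection) on $\Sigma$ and ``inner'' excursions near the discs $D_i$; replace each excursion by an arc of $\partial D_i$ to produce a shadow closed curve on $\Sigma$; apply the $L$-shortness of $\Sigma$ to the shadow; and use Lemma~\ref{lemma: small spheres are L-short} on each $D_i$ (with $d=l_2$, $A\le l_2^2$) to build the bridging arcs. The only notable difference is organizational: the paper builds for each excursion a continuous family of arcs $\alpha^j_s$ joining the moving point on the excursion $a_j(s)$ to the moving point on its replacement $b_j(s)$, with $\alpha^j_0$ and $\alpha^j_1$ both constant; this automatically glues at both ends of the excursion. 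You instead anchor everything at the entry point $r_j$ via the $L_0$-shortness family $\mu^{(j)}_v$, which forces you to insert an explicit contraction of the residual loop $\tau_j^{-1}*\mu^{(j)}_{1/2}$ at the exit. Your description of that contraction (``use the second half of $\mu^{(j)}_v$'') is a bit telegraphic but can be made precise as $\tau_j^{-1}*\mu^{(j)}_v*[\tau_j\text{ from the endpoint of }\mu^{(j)}_v\text{ to }r_j']$ for $v\in[1/2,1]$, followed by the trivial contraction of $\tau_j^{-1}*\tau_j$; this stays within $L+l_1+L_0+2\varepsilon\le L+l_1+204l_2+4\varepsilon$, so the bound is preserved.
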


\begin{proof}
    Let $\gamma$ be a curve on $\Sigma_t$. We will construct 
    a family of short arcs $\alpha_t$ connecting points of $\gamma$ to a point $p \in \gamma$.
    
    Let $S_t = \Sigma \cap N_\varepsilon(\Sigma)$. 
    We will fist suppose that $p \in S_t$.
    After a small perturbation, we may assume that $\gamma$ intersects 
    $\partial S_t$ transversally and let $A^{i}_t$ denote the connected components
    of $\Sigma_t \setminus S_t$ that lies in the small neighbourhoods $N_\varepsilon(D_i)$.

     For each connected component $a_j \subset A^{i}_t$
    of $\gamma \setminus S_t$, let $b_j$ be a curve in $\Sigma \cap N_{\varepsilon}(\partial D_i)$ connecting the endpoints $\partial a_j$.  It follows from Lemma \ref{lemma: small spheres are L-short} that there exists a continuous family of arcs $\alpha^j_s$
    connecting $a_j(s)$ to $b_j(s)$, starting and ending on a constant map, and of length bounded by $l_1 + 204 l_2  + 2 \varepsilon$.
    (Even though $a_j$ do not lie on the discs $D_j$, they  belong to an $\varepsilon$-neighborhood of 
    $D_j$, so, the result follows by projecting $a_j$ onto $D_j$ and connecting 
    points of the projection $a_j'$ to $a_j$ by arcs of length $<\varepsilon$.)

    Let $\gamma' \subset S'$ be a curve obtained from $\gamma$ by replacing each $a_j$ with $b_j$. Let
    $\gamma''$ denote the curve obtained by projecting $\gamma'$ to $\Sigma$. Since $\Sigma$ is $L$-short,  
    there exists a family of arcs $\alpha_t'$ connecting $p$ to $\alpha_t'(1) = \gamma''(t)$
    of length bounded by $L + \varepsilon$.
    Adding a small arc to $\alpha_t'(1)$ in the tubular neighbourhood of $\Sigma$ to connect it to $\gamma'(t)$ and then adding arcs $\alpha^j$
    gives the desired family of arcs with length bounded 
    by $l_1 + 204l_2 + 4 \varepsilon$.
\end{proof}

\begin{remark}\label{remalocalpinch}
We will also need the following local version of Proposition \ref{prop: neck-pinching L-short}:
Given $(M^3,g)$, let $r>0$ be  real number smaller than the convexity radius
of $M$.
Suppose $\Sigma$ is an $L$-short 2-sphere and $D$ is an embedded disk contained inside of a ball $B_r\cap\Sigma$. Then, there exists an $\varepsilon$-neck-pinching family $\{\Sigma_t\}$ along $D$ such that $\Sigma_t$ is $L'$-short for $L'\leq L+2r$. The proof follows by a straightforward modification of the proof of Proposition \ref{prop: neck-pinching L-short}.
\end{remark}

\subsection{Discrete MCF with surgery}
We don't know how to control $L$-shortness 
of surfaces in Mean Curvature Flow.
To overcome this issue we describe a combinatorial
version of the flow. In this version, we repeatedly replace
portions of the surface in a small ball by a certain minimizer in the isotopy
class until the surface converges to a stable minimal surface.

We need the following slight modification of the notion of geometrically prime regions from \cite{LiMa}. 

\begin{definition} \label{def: prime}
A 3-manifold $V$ with non-empty boundary is
geometrically prime if 
\begin{enumerate}
    \item $V$ is diffeomorphic to $S^3$ with some 3-balls removed;
    \item there are no closed embedded minimal surfaces
    in the interior of $V$;
    \item there exists a closed connected component $\Sigma$ 
(that we'll call ``large'') of $\partial V$ that is either mean convex or minimal of Morse index $1$;
    \item $\partial V\setminus \Sigma $ is either empty or a disjoint union 
    of stable minimal 2-spheres.
\end{enumerate}

\end{definition}

\begin{definition} \label{def: isotopy minimizer}
    Let $U$ be an open set with mean convex boundary and $\Sigma \subset cl(U)$ be a surface with $\partial \Sigma \subset \partial U$.
    By the Theorem of W. Meeks, L. Simon and S.T. Yau \cite{MeSiYa82} (see also  \cite[Theorem 7.3]{CD})  
there exists a minimal surface $S \subset U$
    with $\partial S = \partial \Sigma$ obtained by minimizing area among surfaces isotopic
    to $\Sigma$ with fixed boundary and possibly with some necks pinched. We call such $S$ an isotopy minimizer for $\Sigma$ in $U$.
    Let $\mathcal{A}(\Sigma, U)$ denote the area of an isotopy minimizer for $\Sigma$ in $U$.
\end{definition}

The next lemma shows how to foliate the region between a surface $\Sigma$ and a new surface $\hat \Sigma$ obtained from $\Sigma$ by replacing with an isotopy minimizer in a small ball. 

\begin{lemma}\label{lemma:folsmall}
    Let $U$ be a geometrically prime region in $(M,g)$ and
    $\Sigma$ its large connected component of $\partial U$.
    Let $r, \varepsilon \in(0,\text{\rm convrad}(M)/2]$,  $\varepsilon<\frac{1}{100}$ and $p\in\Sigma$.
    Suppose $\partial B_r(p)$ interesects $\Sigma$ transversally and let $S$ be an isotopy minimizer for $\Sigma \cap B_r(p)$ in $U \cap B_r(p)$.
    Then there exist a mean convex surface $\hat\Sigma$ and subset $V$ of $U$ lying between $\Sigma$ and $\hat \Sigma$,
    such that $V$ admits a Morse function
    $g:V\longrightarrow[0,1]$ with the following properties:
 \begin{enumerate}
     \item[\rm (1)] $\Sigma = g^{-1}(0)$;
     \item[\rm (2)] $\hat\Sigma = g^{-1}(1)$, and it  
     satisfies the area estimate: $$\Area(\hat\Sigma) \leq \Area(\Sigma \setminus B_r(p)) +  \mathcal{A}(\Sigma, B_r(p) \cap U); $$
     \item[\rm (3)] $g^{-1}(t) \setminus B_{r +2\eps}(p)$ is a family of graphical surfaces over $\Sigma$ with $C^{2,\alpha}$-norm less than $2\varepsilon$.
 \end{enumerate}
 \end{lemma}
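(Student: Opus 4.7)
The plan is to run the Meeks-Simon-Yau minimization process inside the ball $B_r(p)$ while keeping $\Sigma$ essentially fixed outside, and to realize the resulting deformation as a tree-foliation encoded by a Morse function $g$. Inside $B_r(p)$: by \cite{MeSiYa82}, the isotopy minimizer $S$ of $\Sigma \cap B_r(p)$ in $U \cap B_r(p)$ arises as the limit of surfaces obtained from $\Sigma \cap B_r(p)$ by a finite sequence of boundary-fixing isotopies interspersed with finitely many $\eps'$-neck-pinch surgeries along disjoint embedded minimal disks (for $\eps' \ll \eps$), with area decreasing monotonically. I parametrize this process as a piecewise-smooth family $\{\Sigma^s_{\mathrm{in}}\}_{s \in [0,1]}$ with $\Sigma^0_{\mathrm{in}} = \Sigma \cap B_r(p)$ and $\Sigma^1_{\mathrm{in}}$ of area at most $\mathcal{A}(\Sigma, B_r(p) \cap U)$ (up to an arbitrarily small error).

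Next I glue to the outside. Since every isotopy fixes the boundary $\Sigma \cap \partial B_r(p)$, the surfaces $\Sigma^s_{\mathrm{in}}$ agree with $\Sigma$ as topological sets along $\Sigma \cap \partial B_r(p)$. To obtain a $C^{2,\alpha}$-smooth global family, I use a radial cutoff $\chi : M \to [0,1]$ equal to $1$ on $B_r(p)$ and vanishing outside $B_{r+\eps}(p)$ to extend $\Sigma^s_{\mathrm{in}}$ to a family $\Sigma^s$ defined over all of $M$, coinciding with $\Sigma^s_{\mathrm{in}}$ inside $B_r(p)$, with a graphical $C^{2,\alpha}$-small perturbation of $\Sigma$ outside $B_{r+2\eps}(p)$, and interpolating between them in the shell via the normal exponential map on $\Sigma$ damped by $\chi$. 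Since $\eps < \text{\rm convrad}(M)/2$, all $\Sigma^s$ remain embedded and $C^{2,\alpha}$-close to the construction targets, with graphical norm $\leq 2\eps$ on the outer piece. Setting $V$ to be the region swept out and defining $g : V \to [0,1]$ by $g(x) = s$ for $x \in \Sigma^s$, a small generic perturbation makes $g$ Morse, with critical points precisely at the neck-pinch instants; these become degree-$3$ vertices of the corresponding Reeb tree in the sense of Definition \ref{def: tree-foliation}.

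Set $\hat{\Sigma} := g^{-1}(1)$. Property (1) is immediate; property (3) holds by the choice of cutoff in the shell. For the area estimate, $\hat{\Sigma} \cap B_r(p)$ coincides with $\Sigma^1_{\mathrm{in}}$ of area $\leq \mathcal{A}(\Sigma, B_r(p) \cap U)$, while $\hat\Sigma \setminus B_r(p)$ is a small graphical perturbation of $\Sigma \setminus B_r(p)$; selecting the perturbation in the shell to be an inward normal push (using mean convexity of $\Sigma$) makes it area non-increasing, delivering (2).

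The main obstacle is ensuring \emph{mean convexity} of $\hat\Sigma$ across the seam $\Sigma \cap \partial B_r(p)$, where a minimal surface $S$ meets a merely mean convex $\Sigma$. Inside $B_r(p)$, $\hat\Sigma = S$ is minimal; outside $B_{r+2\eps}(p)$, $\hat\Sigma$ is a $C^{2,\alpha}$-small perturbation of $\Sigma$ (which after a preliminary small inward push along the outward mean curvature normal can be taken strictly mean convex). In the transition shell, mean convexity is arranged by interpolating via foliations by equidistant surfaces from $\partial B_r(p)$ in $U$ and invoking the maximum principle to compare mean curvatures. Simultaneously achieving smoothness, mean convexity, and the area bound across the seam is the delicate technical step of the argument.
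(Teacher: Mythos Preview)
Your outline has two genuine gaps, and in both places the paper takes a cleaner route that you are missing.

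\textbf{Mean convexity of $\hat\Sigma$.} You correctly identify the seam along $\Sigma\cap\partial B_r(p)$ as the crux, but your proposed fix (``interpolating via foliations by equidistant surfaces from $\partial B_r(p)$ \ldots\ invoking the maximum principle'') is not an argument; equidistant surfaces from $\partial B_r(p)$ have no reason to interpolate between the two pieces with the right sign on $H$, and you never explain what you are comparing in the maximum principle. The paper's device is simple and you should adopt it: form the piecewise-smooth sphere $S\cup(\Sigma\setminus B_r(p))$, which is mean convex in the viscosity sense (minimal inside the ball, mean convex outside, corner along $\partial B_r(p)$ pointing the right way), and run mean curvature flow for a very short time $t_0(\varepsilon)$. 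This immediately smooths the corner, produces a strictly mean convex $\hat\Sigma$, keeps $\hat\Sigma$ graphical with $C^{2,\alpha}$-norm $<2\varepsilon$ away from $B_{r+2\varepsilon}(p)$ by Ecker--Huisken interior estimates, and decreases area, giving (2) and (3) simultaneously. Your cutoff-in-the-shell construction is not needed.

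\textbf{Realizing the Meeks--Simon--Yau process as a foliation.} You assert that the minimization in $B_r(p)$ can be parametrized as a continuous area-decreasing family $\{\Sigma^s_{\mathrm{in}}\}$ that sweeps out a region $V$ (so that $g(x)=s$ is well-defined). This is not what \cite{MeSiYa82} gives you: the minimizing sequence consists of isotopies that need not move monotonically, so the leaves may cross and $g$ is ill-defined. The paper avoids this entirely. In the base case where $S$ is isotopic to $\Sigma\cap B_r(p)$ (no neck-pinches), the region between $\Sigma$ and $\hat\Sigma$ is diffeomorphic to $\hat\Sigma\times[0,1]$, so the Morse function is trivial; the actual minimizing sequence is irrelevant. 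When $S$ has extra components (neck-pinches occurred in the limit), the paper does \emph{not} try to track the minimization. Instead it finds, via Meeks--Yau, a single embedded minimal disc $D$ in $U\cap B_r(p)\setminus S$ separating two components of $S$, performs an explicit $\varepsilon$-neck-pinch along $D$ (Remark \ref{remalocalpinch}), and inducts on the number of components. This replaces your uncontrolled ``finite sequence of $\eps'$-neck-pinch surgeries'' by one controlled pinch per inductive step.
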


\begin{proof}

Consider first the case where \( \partial (\Sigma \cap B_r(p)) \) is isotopic to the minimizer \( S \cap B_r(p) \) through isotopies inside the ball \( B_r(p) \) with fixed boundary, as shown in Figure \ref{Fig20}.

\begin{figure}[h!]
    \centering
    \begin{tikzpicture}[scale=0.25] 

\draw[very thick](0,0) circle (10);

\draw[very thick,red ] (-10,0) to [bend left=15] (10,0);

\draw[very thick] plot [ smooth, tension=.8]   coordinates{ (-14,-4) (-10,0) (-6,5) (0,2) (6,4)(10,0) (14, -4)};

\draw[very thick,blue] plot [ smooth, tension=.4]   coordinates{ (-14,-5) (-9.5,-1) (0,0) (9.5,-1) (14, -5)};

\node at (0,4) {$\Sigma$};

\node[red] at (-6,2) {$S$};

\node[blue] at (-6,-2) {$\hat S$};

\node at (2,-8) {$B_{r_0}(p)$};

\end{tikzpicture}
    \caption{$S \cap B_r(p)$ and \( \Sigma \cap B_r(p) \) are isotopic.}
    \label{Fig20}
\end{figure}
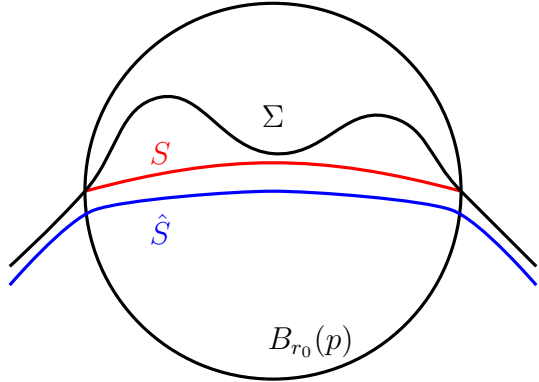

Since \( S \cup (\Sigma \setminus B_r(p)) \) is a 2-sphere with corners (along \( \partial B_r(p) \)), \( \Sigma \) is mean convex, and \( S \cap B_{r_0}(p) \) is minimal, we can flow \( S \cup (\Sigma \setminus B_r(p)) \) by mean curvature flow for a small time \( t_0 > 0 \), obtaining a family of mean convex surfaces \( \Si_t \) terminating at \( \hat \Sigma \).
Let $\hat S$ denote $\hat \Sigma \cap B_r(p)$.
By interior estimates for mean curvature flow, $e.g.$~\cite{EcHu91}[Theorem 4.2], if we choose \( t_0 = t_0(\varepsilon) \) sufficiently small, \( \hat \Sigma\) will satisfy a bound on its $C^{2, \alpha}$-norm as in property (3). Additionally, since \( S \cup (\Sigma \setminus B_r(p)) \) has mean curvature pointing away from \( \Sigma \), the monotonicity of area along the flow ensures that \( \hat \Sigma\) satisfies the inequality in (2). The desired Morse function \( g: V \longrightarrow [0,1] \) can then be constructed by noting that \( V \) is diffeomorphic to \( \hat \Sigma \times [0,1] \), and one can patch a foliation of \( V \cap B_r(p) \) with the foliation of \( V \setminus B_r(p) \) by the surfaces \( \Si_t \) outside of \( B_r(p) \).

By the  Definition \ref{def: isotopy minimizer} of isotopy minimizers we have that the topology of $S$ can defer from the topology of $\Sigma \cap B_{r_0}(p)$ only 
by finitely many neck-pinches. If follows that $n_c(S)$, the number of connected components of $S$, is greater than or equal to $n_c(\Sigma \cap B_{r_0}(p))$, the number of connected components of  $\Sigma \cap B_{r_0}(p))$. Moreover, $n_c(S) = n_c(\Sigma \cap B_{r_0}(p))$ only if $S$
and $\Sigma \cap B_{r_0}(p)$ are isotopic in $B_{r_0}(p)$. The proof now proceeds by induction on $k = n_c(S)-n_c(\Sigma \cap B_{r_0}(p))$.


Assume we have proved the Lemma for a fixed value of $k \geq 0$.
Suppose the minimizer $S$
had $n_c(\Sigma \cap B_{r_0}(p))+k+1$ connected components  
(Figure \ref{Fig19} illustrates the case 
$n_c(S) =2$, $n_c(\Sigma \cap B_{r_0}(p))$).


\begin{figure}[h!]
    \centering
    \begin{tikzpicture}[scale=0.25] 

\draw[very thick](0,0) circle (10);
\draw[thin] (-10,0) to [bend right=25] (10,0);
\draw[thin,dashed] (-10,0) to [bend left=25] (10,0);

\draw[very thick] (-5.2,4.3) circle [x radius=2cm, y radius=1cm, rotate=40];
\draw[very thick] (6,3) circle [x radius=2cm, y radius=4cm, rotate=40];
\draw[very thick] (-5,-5) circle [x radius=1cm, y radius=1.5cm, rotate=40];

\draw[very thick] (-4.1,5.8) to  (3.9,6.25);
\draw[very thick, red] plot [ smooth, tension=.8]   coordinates{ (-6.75,3) (-5,2.4) (-3,5)   (-4.1,5.8)};

\draw[very thick] (-6.75,3) to [bend left=25] (-6.1,-4);
\draw[very thick] (-4,-6.2) to  (7.7,-0.25);
\draw[red,very thick] (-4,-6.2) to [bend left=35] (7.7,-0.25);
\draw[red,very thick] (-6.1,-4) to [bend left=15]  (3.9,6.25);

\draw[blue,dashed, very thick] (-5.6,-1) to [bend left=15] (0,6.1);
\draw[very thick, blue] plot [ smooth, tension=.5]   coordinates{(-5.6,-1) (-3.5,1.7)  (-1.5,3)(0,6.1)};

\node[blue] at (0,7) {$\gamma$};

\node at (4,-5.7) {$\Sigma$};

\node[red] at (2,1) {$S$};

\end{tikzpicture}
    \caption{$S$ and \( \Sigma \) are not smoothly isotopic.}
    \label{Fig19}
\end{figure}
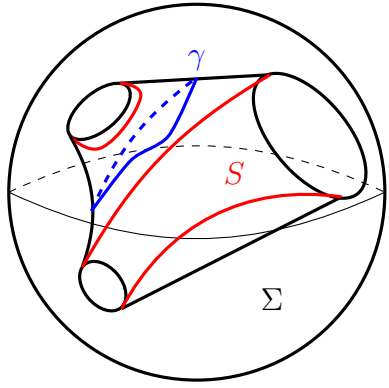

By Definition \ref{def: isotopy minimizer}
there exists a disc $D' \subset U$ with
$\gamma = \partial D' \subset \Sigma \cap B_r(p)$ with the property that $D'$ separates two connected components of $S$.
By Meeks-Yau \cite{MeYa82} we can minimize
in the class of embedded discs that are isotopic to $D'$ in $U \cap B_r(p) \setminus S$ with fixed boundary $\gamma$ to obtain an embedded disc $D$.

By Remark \ref{remalocalpinch}, disc $D$
can be used to deform $\Sigma$ by a neck-pinching family, which is \( L' \)-short, with \( L' \leq L + 2r \). After the deformation, we reduce the number of boundary components of \( \Sigma \) by one. We can now apply the inductive assumption to the new boundary and region we obtained.
\end{proof}

\begin{remark}\label{remasmallfol}
By Remark \ref{remalocalpinch}, if in addition to the hypothesis of Lemma \ref{lemma:folsmall} one assumes the surface $\Sigma$ is $L$-short, then for all $\varepsilon$ sufficiently small we can conclude that surfaces $g^{-1}(t)$ will be $L'$-short, for an $L'\leq L+3r$.
\end{remark}

 Given a geometrically prime region $N$, let
 $\Sigma_i$ denote the stable minimal boundary
 components of $N$. For $A>0$, we consider the set of
 stationary integral varifolds of the form:
 $$\mathcal{V}_{N,A}= \left\{V=\sum_j a_{i_j} \Sigma_{i_j},~a_{i_j} \in \mathbb{N},~||V||(N) \leq A \right\}.$$

 Since the coefficients $a_{i_j}\geq0$, the varifold measure of a given $V=\sum_j a_{i_j} \Sigma_{i_j}$ is given by: 
 $$||V||(N)=\sum_j a_{i_j} \Area(\Sigma_{i_j}),$$
and, so, for every $A>0$, the set $\mathcal{V}_{N,A}$ is finite. 

The next Proposition shows that, in a geometrically prime region, using isotopies and neck-pinches in a small ball of fixed radius, one can always decrease the area on an embedded sphere $\Sigma$ by a definite amount unless $\Sigma$, as a varifold, lies in a small neighborhood of $\mathcal{V}_{N,A}$. We will use the $\F$ distance function of varifolds from Pitts \cite{Pitts81}, a metrization of varifold weak topology.

\begin{proposition} \label{surgery}
   Let $N \subset M$ be a geometrically prime region. Given $A>0$ and $\eta_0>0$, there exist $r_0 \in (0, \text{\rm convrad}(M)/2)$ and $\eps_0>0$
   with the following properties.
    Let $\Sigma$ be an embeddded sphere and $U$
    connected component of $N \setminus \Sigma$, such that
    \begin{itemize}
        \item $\partial U$ is mean convex;
        \item $\Area(\Sigma) \leq  A$;
        \item $\F(\Sigma, \mathcal{V}_{N,A}) > \eta_0$.
    \end{itemize}
    Then there exists a ball $B_{r_0}(p)$, such that $\Area(\Sigma \cap B_{r_0}(p))-\mathcal{A}(\Sigma \cap B_{r_0}(p),U \cap B_{r_0}(p))> \varepsilon_0$.
\end{proposition}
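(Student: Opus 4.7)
The plan is to argue by contradiction using varifold compactness. Fix $A>0$ and $\eta_0>0$ and suppose the conclusion fails. Then we can extract sequences $r_n \to 0$, $\varepsilon_n \to 0$, embedded spheres $\Sigma_n \subset N$ and components $U_n$ of $N \setminus \Sigma_n$ satisfying the three bulleted hypotheses but with
$$\Area(\Sigma_n \cap B_{r_n}(p)) - \A(\Sigma_n \cap B_{r_n}(p), U_n \cap B_{r_n}(p)) \leq \varepsilon_n$$
for every $p \in M$ for which $\partial B_{r_n}(p)$ meets $\Sigma_n$ transversally. Since $\Area(\Sigma_n) \leq A$, Allard's compactness theorem yields a subsequence with $[\Sigma_n] \to V$ in the $\F$ topology, where $V$ is an integer rectifiable varifold with $\|V\|(N) \leq A$ supported in the closure of $N$.

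The first main step is to show $V$ is stationary. For any smooth vector field $X$ compactly supported in an open ball $B$ of radius less than $r_n/2$ whose closure lies in $\mathrm{int}(N)$, the flow $\varphi_t$ defines an admissible isotopy of $\Sigma_n \cap B_{r_n}(p)$ in $U_n \cap B_{r_n}(p)$ for small $|t|$ and any $p$ with $B \subset B_{r_n}(p)$. Hence by the definition of $\A$ and the hypothesis,
$$\Area(\varphi_t(\Sigma_n)) \geq \Area(\Sigma_n) - \varepsilon_n.$$
Comparing with the first-variation expansion $\Area(\varphi_t(\Sigma_n)) = \Area(\Sigma_n) + t\,\delta V_n(X) + O(t^2)$ and optimizing over $t$ yields $|\delta V_n(X)| \leq C_X \sqrt{\varepsilon_n}$, so in the limit $\delta V(X) = 0$ for every such $X$. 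A partition of unity extends stationarity to all compactly supported $X$ in $\mathrm{int}(N)$, and the same bound applied to inward-pointing vector fields near $\partial N$ rules out mass of $V$ concentrating on any strictly mean convex portion of $\partial N$.

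The second main step is regularity and stability of $V$. The assumption that $\Sigma_n$ is within $\varepsilon_n$ of isotopy-minimizing in each ball of radius $r_n$ fits $\Sigma_n$ into the almost-minimizing framework of \cite{Pitts81}; combined with the Schoen--Simon regularity for stable minimal surfaces in dimension $3$, this shows that $V$ is supported on a smooth embedded minimal surface with integer multiplicities. Stability of the support follows because an unstable direction would produce an isotopic perturbation of $\Sigma_n$ in a small ball that decreases area by a definite amount, violating the hypothesis for large $n$. By Definition \ref{def: prime}, $N$ contains no closed embedded minimal surfaces in its interior, so $\mathrm{supp}(V) \subset \partial N$. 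The large component of $\partial N$ is either strictly mean convex (already excluded) or minimal of Morse index one (excluded by stability of $\mathrm{supp}(V)$). Therefore $V$ is supported on the stable minimal sphere components $\Sigma_i$ of $\partial N$, and by integrality and the mass bound $V = \sum_j a_{i_j}\,\Sigma_{i_j}$ with $a_{i_j} \in \mathbb{N}$ and $\|V\|(N) \leq A$, i.e., $V \in \mathcal{V}_{N,A}$.

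Since $[\Sigma_n] \to V$ in the $\F$ metric, we have $\F(\Sigma_n, \mathcal{V}_{N,A}) \leq \F(\Sigma_n, V) \to 0$, which contradicts the standing assumption $\F(\Sigma_n, \mathcal{V}_{N,A}) > \eta_0$, proving the proposition. The main technical obstacle is passing from the single-step isotopy-minimizing data in shrinking balls to the Pitts-style almost-minimizing condition that yields both smoothness and stability of $\mathrm{supp}(V)$; once that is in place, the rest is a soft compactness-and-rigidity argument that exploits the structure of geometrically prime regions.
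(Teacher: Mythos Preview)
Your overall strategy---contradiction, varifold compactness, stationarity, regularity via an almost-minimizing framework, then ruling out the limit using the structure of a geometrically prime region---is exactly the paper's approach. The paper cites Colding--De Lellis \cite[Theorem 7.1]{CD} directly for the step ``$1/j$-a.m.\ $\Rightarrow$ smooth minimal limit'' rather than going through Pitts and Schoen--Simon, but this is a packaging difference, not a mathematical one. Your final paragraph (ruling out the interior by Definition \ref{def: prime}, ruling out the large boundary component by mean convexity or index, concluding $V\in\mathcal{V}_{N,A}$) is in fact more explicit than the paper, whose written proof stops at ``the support of $V$ is a smooth minimal surface'' and leaves that last deduction to the reader.

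There is, however, a genuine gap in your setup. In negating the statement you take \emph{both} $r_n\to 0$ and $\varepsilon_n\to 0$. The negation only says that for every pair $(r_0,\varepsilon_0)$ the conclusion fails; you are free to keep $r_0$ fixed and send only $\varepsilon_0\to 0$, and this is what the paper does. Your choice breaks the stationarity argument: you test $\delta V_n$ against a vector field $X$ supported in a ball of radius $<r_n/2$, but since $r_n\to 0$ no fixed nonzero $X$ works for all large $n$, so you cannot pass to the limit to get $\delta V(X)=0$. The same problem obstructs the almost-minimizing step, which also requires a fixed scale. The fix is simply to set $r_n\equiv r_0$ for some $r_0<\mathrm{convrad}(M)/2$ and let $\varepsilon_n=2^{-n}$; then your argument goes through and coincides with the paper's.
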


\begin{proof}
We argue by contradiction.
    Suppose the result does not hold. 
    Then there exists a sequence $\Sigma_i\subset U_i$, such that 
    \begin{equation}\label{eq: area deficit}
         \Area(\Sigma_i \cap B_{r_0}(p))-\mathcal{A}(\Sigma \cap B_{r_0}(p),U_i \cap B_{r_0}(p))\leq 2^{-i}
    \end{equation}
    for all $p\in N$.

Since the areas of $\Sigma_i$ are bounded by $A$
there exists a subsequence of $\{ |\Sigma_i| \}$ that converges to a varifold $V$.
    We claim that $V$ is a stationary varifold.
    Indeed, otherwise, there exists a vector field $W$ supported in $B_{r_0}(p)\cap U_i$ and an isotopy $F(\cdot,t) $ of $N$  generated by $W$ (the isotopy  is equal to 
    identity outside $B_{r_0}(p) \cap U_i$)
    that reduces areas of $\Sigma_i$ for all sufficiently large $i$.
   Observe that the condition (\ref{eq: area deficit}) implies that after taking a subsequence we may assume
   that $\{\Sigma_j \}$ is $1/j$-a.m. in the sense of \cite[Definition 3.2]{CD}.
   By \cite[Theorem 7.1]{CD} the support of $V$ is a smooth minimal surface.
\end{proof}

We will also need a lemma that allows us to foliate regions bounded by spheres of very small area by $L$-short spheres.

\begin{lemma} \label{very small area}
    Let $M$ be a Riemannian 3-manifold and $\delta>0$. There exists $\varepsilon(M, \delta)>0$ with the following property.  
    Let $U \subset M$ be a
    subset of $M$ with
    $Vol(U) < Vol(M \setminus U)$ and 
    $\partial U =  \Sigma$
    an $L$-short mean convex 2-sphere with $\Area(\Sigma) < \varepsilon$.

Then there exists a tree-foliation $\{\Sigma_t\}_{t \in T}$ of $U$, starting with $\Sigma_{t_0} = \Sigma$, by $L'$-short surfaces 
for $L'\leq L + \delta$.
\end{lemma}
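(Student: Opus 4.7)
The key geometric observation is that a mean convex $2$-sphere $\Sigma$ of sufficiently small area bounds a region $U$ that is not merely of small volume but also \emph{thin}: every point of $U$ lies in a small tubular neighborhood of $\Sigma$. Once this is established, any tree-foliation of $U$ consists of surfaces close to $\Sigma$, and the $L$-shortness of $\Sigma$ transfers to each leaf with an error controlled by $\delta$.

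The plan is to first invoke the local isoperimetric inequality on the fixed compact $3$-manifold $M$ to obtain $Vol(U) \leq C_1(M)\,\varepsilon^{3/2}$ for $\varepsilon$ small, using that $U$ is the smaller-volume side. Since any ambient ball $B_\rho(x) \subset U$ has volume at least $c(M)\rho^3$ for $\rho$ below the injectivity radius, the thickness $\rho_0$ of $U$ (the supremum of radii $\rho$ for which some ball $B_\rho(x)$ is contained in $U$) is bounded by $C_2(M)\,\varepsilon^{1/2}$; in particular $\sup_{x \in U} d(x, \Sigma) \leq \rho_0$. I choose $\varepsilon(M, \delta)$ small enough that $\rho_0 < \delta/10$ and $\rho_0 < \text{\rm convrad}(M)$. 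Then I take any smooth tree-foliation $\{\Sigma_t\}_{t \in T}$ of $U$ with $\Sigma_{t_0} = \Sigma$, which exists by standard Morse theory on the compact $3$-manifold $U$ (in the main application $M \cong S^3$, $U$ is a $3$-ball and radial level sets from an interior point suffice).

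To verify $L'$-shortness of each leaf $\Sigma_t$: for a closed curve $\gamma \subset \Sigma_t$ based at $p = \gamma(0)$, I subdivide $\gamma$ into many short arcs, project the subdivision points to nearest points on $\Sigma$ via ambient geodesics of length $\leq \rho_0$, and connect consecutive projections by short paths inside $\Sigma$ (using convexity at scale $\rho_0$) to form a closed piecewise smooth curve $\tilde\gamma \subset \Sigma$ based at the projection of $p$. Applying the $L$-shortness of $\Sigma$ to $\tilde\gamma$ yields a family $\tilde\alpha_s$ of arcs in $M$ of length at most $L$; concatenating these with the short connecting segments (and reparameterizing near $s = 0, 1$) produces the desired family $\alpha_s$ for $\gamma$ of length at most $L + 2\rho_0 + o(1) \leq L + \delta$.

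The principal technical obstacle is ensuring continuity of the arc family $\alpha_s$ together with the constancy condition $\alpha_0 = \alpha_1 \equiv p$, since a global nearest-point projection to $\Sigma$ may fail along the cut locus of $\Sigma$. This is resolved by subdividing $\gamma$ finely enough that consecutive subdivision points lie in a common convex ball of $M$ (within which short connecting geodesics are unique and depend smoothly on their endpoints), and by a small reparameterization of the arc family near $s = 0, 1$ that collapses the connecting segments into the constant path at $p$. All additional length contributions in this construction are bounded by $O(\rho_0) < \delta$ by the choice of $\varepsilon$.
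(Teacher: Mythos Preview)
Your thinness estimate is correct: by the isoperimetric inequality and the lower volume bound for ambient balls, every point of $U$ lies within $\rho_0 \leq C(M)\,\varepsilon^{1/2}$ of $\Sigma$. The gap is in the step ``connect consecutive projections by short paths \emph{inside} $\Sigma$.'' Two points $b_i, b_{i+1} \in \Sigma$ that are $3\rho_0$-close in the ambient metric of $M$ need not be close in the intrinsic metric of $\Sigma$: a $2$-sphere of area $<\varepsilon$ can carry a thin finger of length $\ell$ and circumference $\sim \varepsilon/\ell$, and nearest-point projections of consecutive subdivision points of $\gamma \subset \Sigma_t$ can land on opposite sides of such a finger, forcing any connecting path in $\Sigma$ to have length $\gtrsim \ell$. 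There is no bound on the normal injectivity radius (reach) of $\Sigma$ in terms of its area, so one cannot produce a continuous nearest-point projection of $\gamma$ onto $\Sigma$ either; and replacing the intrinsic connectors by ambient geodesics yields a closed curve $\tilde\gamma$ not contained in $\Sigma$, so the $L$-shortness hypothesis on $\Sigma$ does not apply to it. This is exactly the ambient-versus-intrinsic diameter obstruction discussed in the introduction of the paper, and it is not resolved by the fine subdivision you propose.

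The paper avoids this projection problem altogether. It slices $\Sigma$ by a nested family of ambient spheres $\partial B_{r_i}(p)$ chosen via the coarea formula so that each intersection $\Sigma \cap \partial B_{r_i}(p)$ has total length $< r/20$, caps these short boundary curves by area-minimizing discs $D_\gamma$ (which, by the isoperimetric inequality and the monotonicity formula, stay inside balls of radius $r/4$), and performs neck-pinchings along the $D_\gamma$. Each resulting component $C_k \cup \bigcup D_{\gamma_j}$ is then shown to lie in a single convex ambient ball of radius $r < \delta/2$ and is therefore $2r$-short directly from the ambient geometry; the neck-pinching family itself is $(L+\delta)$-short by the local estimate in Remark~\ref{remalocalpinch}. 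Thus the foliation is built so that every leaf is either in a small ball or arises from a controlled local pinch of $\Sigma$, and no transfer of $L$-shortness through a projection onto $\Sigma$ is needed.
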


\begin{proof}
Fix $r\in (0, \frac{\delta}{2})$, so that $r$ is smaller than the convexity radius
of $M$ and every ball of radius $r$
is $2$-bilipschitz diffeomorphic to a Euclidean ball of the same radius.
Suppose $\varepsilon>0$ is such that $\frac{\varepsilon}{r}<\frac{r}{100}$.

Let $K =\lfloor \frac{2\diam(M)}{r} \rfloor$.
Let $p \in \Sigma$. 
Fix a point $p \in \Sigma$.
By coarea inequality we can find balls $\{B_{r_i}(p)\}_{i=1}^K$ with $r_i \in [ir/5, (i+1)r/5]$
and
$$\length(\partial B_{r_i}(p) \cap \Sigma) \leq \frac{r}{20}.$$
Let $\gamma$ denote a connected component of $  \Sigma \cap \partial B_{r_i}(p)$. From the length estimate  we have that $\gamma$ is contained in a ball $B_{r/40}(q)$ for some $q \in \gamma$. By \cite{MeYa82}  there exists an embedded disc $D_{\gamma} \subset U$  that
minimizes area among all discs in $U$
with boundary $\gamma$. 
It follows from the isoperimetric inequality and monotonicity formula for minimal surfaces that, assuming $\varepsilon$ is sufficiently small, we have \begin{equation}\label{D_gamma}
    D_{\gamma} \subset U \cap B_{r/4}(q)
\end{equation}


Consider a connected component $C_k$ of $\Sigma \setminus \bigcup_i \partial B(p,r_i)$ that does not contain $p$. Since $\Sigma$ is a 2-sphere there will be exactly one connected component of $\partial C_k$ (let's call it $\gamma_1$) that is closer to $p$ than all other connected components of $\partial C_k$. By construction, we have that for every point $x \in C_k$ we have 
\begin{equation} \label{eq: dist to gamma1}
    dist_M(x,\gamma_1)<\frac{2r}{5}
\end{equation}

Let $\Sigma_k= C_k \cup \bigcup_j D_{\gamma_j}$,
where $\gamma_j$ denotes the connected components of $\partial C_k$.
From (\ref{D_gamma}) we have that each $D_{\gamma_j}$ is contained in a ball of radius $\leq \frac{r}{4}$ centered around a point on $\partial D_{\gamma_j}$. Combining this with (\ref{eq: dist to gamma1}) we obtain that
$\Sigma_k \subset B_r(p')$ for some $p' \in \gamma_1$.
Since $r>0$ is less than the convexity radius
of $M$
any surface contained in $B_r(p')$ is $L$-short
for $L< 2r$. Let $V_k$ denote the region bounded by $\Sigma_k$ in $B_r(p')$. By Alexander's theorem there exists a diffeomorphism
$\Phi: B(1) \rightarrow V_k$, where $B(1)$ denotes the unit ball in $\R^3$. Then
$$S_t = \Phi(\{(x,y,x): x^2+y^2 +z^2 = (1-t)^2\})$$
defines a foliation of $V_k$ by $L$-short 2-spheres.

By Remark \ref{remalocalpinch},
there exists a neck-pinching family starting on $\Sigma$ along discs 
$\{D_\gamma\}$ that is $L'$-short for $L'\leq L+\delta$.
\end{proof}

\subsection{Proof of Theorem \ref{thm: L-short foliation}}
By approximation, it is enough to prove the result for generic metrics (in the sense of Baire Category). Thus, by a result of White \cite{White:bumpy}, we may assume the metric $g$ is such that all of its closed embedded minimal surfaces have no nontrivial Jacobi fields. As in the proof of \cite[Theorem 2.7]{LiMa}, we decompose $M$ as a union of geometrically prime regions $\{M_i\}$ with disjoint interiors, and since $M$ is diffeomorphic to the 3-sphere, for each $i$, the large boundary component $S_i$ of $\partial M_i$ is a minimal 2-sphere of Morse index $1$ and all other boundary components are stable minimal 2-spheres. Since $R\geq6$, by classical diameter and area estimates for minimal surfaces (cf., \cite[Theorem 2.1, Corollary 2.4]{LiMa}), the large boundary components $S_i$ have area at most $4\pi$ and diameter at most $\frac{4\pi}{3}$, and all other components have area at most $\frac{2\pi}{3}$ and diameter $\frac{2\pi}{3}$.
 
We will define a tree-foliation of each $M_i$ by $L$-short spheres for an $L<4500$. It is clear that
this implies existence of an $L$-short tree-foliation of $M$.

Now we describe the construction of the tree-foliation for $M_i$. We start by using the first eigenfunction of the 
stability operator, as in the proof of \cite[Lemma 3.2]{MaNe12} to deform the large boundary component $S_i$ to a surface $\Sigma^0$ in the interior
of $M_i$ via an area-decreasing deformation in the tubular neighbourhood 
of $S_i$. Note that $\Sigma^0$ will have mean curvature pointing away from $S_i$ at every point. 
Using Proposition \ref{surgery}, we then define a sequence of area-decreasing deformations and surgeries.
At the end of each step, we obtain a (possibly disconnected) surface  $\Sigma^k= \bigsqcup_j \Sigma^k_j$ of area 
\begin{equation*}
\Area(\Sigma^k) \leq \Area(\Sigma^0) - k\eps_0.
\end{equation*}
with mean curvature pointing away from $S_i$ (we allow for number of connected components of $\Sigma^k$ to possibly increase with $k$).

The construction proceeds by induction on $k$.
Let $l_0 = 14.5$ and $\eta_0>0$ be a small constant depending on $M_i$ to be
specified later. Let $r_0 \in (0, \frac{\text{\rm convrad}(M)}{2})$ and $\varepsilon_0$ 
be constants depending on $\eta_0$ that satisfy
the conclusions of Proposition \ref{surgery}. Without any loss of generality can assume 
$r_0, \varepsilon_0< 0.1$.
We suppose that we have constructed a tree foliation of $U_{k-1} \subset M_i$,
such that the following holds:

\begin{itemize}
    \item[($i$)] $M_i \setminus U_{k-1} = \bigsqcup V_j$, where each $V_j$
    is a geometrically prime region;
    \item[($ii$)] the large boundary 
    component $\Sigma^{k-1}_j$ of $V_j$ satisfies 
    $\diam(\Sigma^{k-1}_j) \leq l_0$ and $\Area(\Sigma^{k-1}_j) \leq  \Area(\Sigma_0) - (k-1)\eps_0$;
    \item[($iii$)]  $\Sigma^{k-1}_j$
is at a distance greater than $\eta_0$ in $\F$ varifold distance
from every stable minimal sphere in $\partial M_i$. 
\end{itemize}

The new surface $\Sigma^k$ will be obtained by applying the following procedure to each connected component  $\Sigma^{k-1}_j$ of $\Sigma^{k-1}$. Since $\diam(\Sigma^{k-1}_j) \leq l_0$ and, by hypothesis ($ii$),
$$\Area(\Sigma^{k-1}_j) \leq  \Area(\Sigma_0) - (k-1)\eps_0<\Area(\Sigma_0)\leq 4\pi < l_0^2,$$
Lemma \ref{lemma: small spheres are L-short} gives that $\Sigma^{k-1}_j$ is $L$-short for $L\leq 2958$. Using hypothesis $(iii)$, we apply, for each $j$,
 Proposition \ref{surgery} to find a ball $B_{r_0}(p)$ and 
surface $S\subset V_j$, so that $S \setminus B_{r_0}(p)=
\Sigma^{k-1}_j \setminus B_{r_0}(p)$
and $S \cap B_{r_0}(p)$ is an isotopy minimizer for
$\Sigma^{k-1}_j \cap B_{r_0}(p)$ in $B_{r_0}(p)$, satisfying
$$\Area(S \cap B_{r_0}(p) )< \Area (\Sigma^{k-1}_j \cap B_{r_0}(p))- \varepsilon_0.$$
By Lemma \ref{lemma:folsmall}, there exists a surface $\hat{\Sigma}_j^{k-1}$ such that
\[
\Area(\hat\Sigma_j^{k-1}) \leq \Area(\Sigma_j^{k-1} \setminus B_r(p)) +\Area(S \cap B_{r_0}(p)) \leq \Area(\Sigma^0) - k\eps_0.
\]

Since each \( {\Sigma}_j^{k-1} \) is \( L \)-short with \( L = 2958 \), it follows from Remark \ref{remasmallfol} that the Morse foliation described in Lemma \ref{lemma:folsmall}, which spans the region between \( \hat{\Sigma}_j^{k-1} \) and \( {\Sigma}_j^{k-1} \), has leaves that are \( L \)-short for \( L = 2958 + 3r_0 \).

Now we come to the key difficulty in the proof: the operation of replacing a part of the surface with an isotopy minimizer inside a small ball (which is described in Proposition \ref{surgery} and Lemma \ref{lemma:folsmall}) may potentially increase the intrinsic diameter of the surface in a way we cannot control. We introduced the notion of $L$-short surfaces to deal specifically with this problem.

 For each connected component $\hat S$ of $\hat{\Sigma}_j^{k-1}$, we now consider two possibilities: if the intrinsic diameter $d$ of $\hat S$ satisfies $d \leq l_0=14.5$, then $\hat S$ bounds a new geometrically prime region, which we label  $\Sigma^k_{j'}$, and it satisfies the hypotheses of the inductive step. If $d>14.5$, we pick points $p_0$ and $q_0$ at a distance $d$ in $\hat S$ and consider boundaries of geodesic balls $\partial B_{p_0}^{\hat S}(r)=\partial \{dist_{\hat S}(x,p)\leq r: x \in \hat S \}$, $r\in (0,d]$. For each integer $l=1,\ldots, \lceil d \rceil$, consider the region $A_l= B_{p_0}^{\hat S}(l)\setminus B_{p_0}^{\hat S}(l-1)$. Since $\Area(\hat{S})<4\pi$, by the coarea formula, for each $l$, there exists a radius $r_l\in (l-1,l)$, such that  $\partial B_{p_0}^{\hat S}(r_l)$ is a finite collection of embedded smooth curves of total length at most $2\pi$. Observe that $\partial B_{p_0}^{\hat S}(r_l)$ may not be connected, and thus we denote its connected components as $\gamma_{l,1}, \gamma_{l,2}, \ldots, \gamma_{l,k_l}$. This way, each connected component of $\hat{S} \setminus \bigcup_{l,i} \gamma_{l,i}$ has area at most $4\pi$ and boundary curves of length at most $2\pi$. For such component $C$, we glue in area-minimizing disks $D_{m}$ along the boundary curves of $C$, and perform neck-pinching isotopies along each $D_{m}$. By the filling radius estimate, each disk $D_{m}$ has diameter at most 
 $$\frac{2 \pi }{3} + \frac{\length(\partial D_{m})}{2}+\frac{2 \pi }{3}\leq 7\pi/3$$
 and since the area of $\hat S$ is at most $4\pi$, we have $\Area(D_{m}) \leq \frac{\Area(\hat S)}{2}\leq 2 \pi$. Thus, by Proposition \ref{prop: neck-pinching L-short}, the surfaces obtained in the process of neck-pinching will all be $L$-short for 
$$L\leq 2958+3r_0+2 \pi + 204\frac{7 \pi}{3} + 4 \varepsilon_0<4500$$

After pinching, each connected component $E$ is obtained by a small area-decreasing deformation of $C \cup \bigcup_{m \in N_C }D_{m}$,
where $N_C$ denotes the set of indices with the property
$\partial D_m \subset \partial C$ 
for all $m \in N_C$,
which is a sphere of area at most $\Area(\Sigma_0) - k\eps_0< 4 \pi$ and diameter $d<14.5$. First we explain why the area bounds hold. Let $\tilde{D}_{m} \subset \hat S$ denote a disc in $\hat S$ with $\partial \tilde{D}_{m} = \partial D_{m}$ and such that the interior of 
$C$ is disjoint from the interior of 
$\tilde{D}_{m}$. Since 
$D_{m}$ is area minimizing in the class of
discs isotopic to $\tilde{D}_{m}$ with fixed boundary we have
$$\Area(E) \leq 
\Area(C \cup \bigcup_{m  \in N_C} \tilde{D}_{m})=
\Area(\hat{S})\leq  \Area(\Sigma_0) - k\eps_0< 4 \pi$$
Now we explain the diameter bound.
Let $p_1,p_2 \in E$. Observe that since $S$
is a sphere there exists a unique connected component of
$\partial C$ that is closer to $p$ than all other connected components $\gamma$ of 
$\partial C$. We have $dist_{\hat S}(p_i, \gamma) < \frac{2 \pi}{3} + 2$, and thus: $$dist(p_1,p_2) < 2\left(\frac{2 \pi}{3} + 4\right)  + \frac{1}{2}\textrm{L}(\gamma)<14.5.$$
After doing this on each connected component $\Sigma^{k-1}_j$ of $\Sigma^{k-1}$, and taking the union of new 
$\Sigma^{k}_{j'}$, we obtain the surface $\Sigma^k$.

Since for each $\Sigma^k_j$ we have $\Area(\Sigma^k_j) \leq \Area(\Sigma_0) - k\eps_0
$, it is clear that the inductive process cannot go on forever. It stops when the large components of the geometrically prime regions still to be foliated do not satisfy the hypothesis $(iii)$, that is, its $\F$-distance to the stable components of the boundary must be small. This implies that $\Sigma^k_j$ is $\eta_0$-close in the varifold sense to a union $\G$ of some of the stable minimal boundary components of $M_i$ (with some integer multiplicities). Since varifold convergence does not imply Hausdorff convergence, we argue as follows. Recall, both $\Sigma^k_j$ and each connected component of $\G$ have bounds on area and diameter and thus are $L$-short for $L=2958$. We consider equidistant surfaces $\G_\delta = \partial N_\delta(\G)$, $\delta>0$, to $\G$ and their intersections with $\Sigma_k$. Given $\delta \in (0,r_0)$ small and $\varepsilon_1< \frac{\delta^2}{100}$, we may choose $\eta_0=\eta_0(\delta, \varepsilon_1)$ such that $\Sigma^k_j$ being $\eta_0$-close to $\G$ in varifold sense implies that  $\Sigma^k_j \setminus N_\delta(\G)$ has area less than
$\varepsilon_1$. 
Using the coarea inequality we can find
$h \in (\delta, 2 \delta)$, such that
the intersection $\Sigma^k_j \cap \partial N_h(\G)$
has length less than $\frac{\delta}{10}$. By the isoperimetric inequality each connected component of
$\Sigma^k_j \cap \partial N_h(\G)$
bounds a small area disc inside 
$\partial N_h(\G)$.
Starting with the innermost boundary curve we can apply
 Remark \ref{remalocalpinch} to define neck-pinching families along these small discs. In the end, we obtain a tree foliation that starts on $\Sigma^k_j$
 and deforms it to a disjoint union
 of surfaces
 $\Sigma^L \cup \Sigma^S$, where $\Sigma^L \subset N_{2 \delta}(\G)$
 and $\Sigma^S$ satisfies
 $\Area(\Sigma^S)< \varepsilon_1$.
 
We apply Lemma \ref{very small area}
to foliated small regions bounded by connected components of $\Sigma^S$. Finally, we can foliate regions between $\Sigma^L$ and $\Gamma$ in an arbitrary way. All surfaces in the foliation will be $L+ 2 \delta$-short for $L<4500$ since they are all contained in the $\delta$-tubular neighborhood of a connected component of $\G$, which is $L$-short.

\section{Parametric sweepout of $L$-short spheres} \label{sec: parametric}

Let $M$ be a Riemannian $3$-sphere with scalar curvature bounded below by  $6$.
Let $\Omega M$ denote the space of closed, piecewise differentiable curves on $M$, and $\Omega_xM$, the subspace of closed curves of length at most $x$. (Note that $\Omega_0M$ will then denote the space of constant curves, which can be identified with $M$ itself.)

In this section we will construct a sweepout of $M$ by ``short" curves, $i.e.$, a map $f:D^2 \longrightarrow \Omega_x M$ where $\partial D^2$ is mapped to $\Omega_0M$ and that is not contractible over $2$-disks with fixed boundary. Here $x = 10 ^5$, which will immediately imply the proof of Theorem ~\ref{thm:mainC}.

\begin{theorem} \label{sweepout of M}
    Suppose $M = (S^3,g)$ is a Riemannien 3-sphere
    with scalar curvature $\geq 6$.
    There exists a noncontractible family 
    $$f: (D^2, \partial D^2) \rightarrow (\Omega_x M, \Omega_0 M),$$ where $x = \finalbound$.
\end{theorem}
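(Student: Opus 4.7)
The plan is to take the tree-foliation by $L$-short $2$-spheres provided by Theorem \ref{thm: L-short foliation} and to assemble ``short'' sweepouts of its leaves into a noncontractible 2-parameter family of closed curves in $M$ of length at most $\finalbound$. The construction proceeds in three stages: set up a raw (possibly long) 2-parameter family whose parameter space is $D^2$, shorten each loop individually using $L$-shortness, and then verify that the shortened family still represents the generator of $\pi_2(\Omega M, \Omega_0 M) \cong \Z$.

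For the raw family, I would apply Theorem \ref{thm: L-short foliation} to obtain a tree-foliation $\{\Sigma_t\}_{t \in T}$ with $L < 4500$; choose a continuously varying basepoint $p_t \in \Sigma_t$, and on each edge $E$ of $T$ a smoothly varying degree-one sweepout $G_t : [0, 1] \to \Omega \Sigma_t$ based at $p_t$. The delicate point is at a degree-$3$ vertex $v \in T$, where $\Sigma_v = S_1 \vee S_2$ is a wedge of two spheres joined at a pinch point and the three incident edges see $\Sigma_v$ as $S_1 \# S_2$ (on the ``before'' edge) or as $S_1$ or $S_2$ alone (on the two ``after'' edges). I would pick $G_v$ to sweep $S_1$ on $s \in [0, 1/2]$ and $S_2$ on $[1/2, 1]$; on an ``after'' edge carrying just $S_i$, set $G_t$ to sweep $\Sigma_t = S_i$ on the matching half of $[0, 1]$ and be constant on the other half, so that the sweepouts glue consistently at $v$. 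Assembling the rectangles $E \times [0, 1]$ along these pinch loci and collapsing the constant-loop parts to the boundary, the parameter space $P$ is homeomorphic to $D^2$ with $\partial P$ sent into $\Omega_0 M$, giving the raw map $f_0 : (D^2, \partial D^2) \to (\Omega M, \Omega_0 M)$.

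For the shortening step, fix $(t, s) \in P$ and let $\gamma = G_t(s) \subset \Sigma_t$; by $L$-shortness there is a smooth family of arcs $\{\alpha_\theta\}_{\theta \in [0, 1]}$ in $M$, each of length at most $L$, with $\alpha_\theta(0) = p_t$, $\alpha_\theta(1) = \gamma(\theta)$, and $\alpha_0 = \alpha_1$ constant. Following the replacement strategy used in the proof of Theorem \ref{LNR} and Lemma \ref{lemma: small spheres are L-short}, I would subdivide $\gamma$ into fine subarcs at points $\gamma(\theta_i)$, replace each subarc by the concatenation $\alpha_{\theta_i} \cdot \alpha_{\theta_{i+1}}^{-1}$ of length at most $2L$, and interpolate using the family $\{\alpha_\theta\}$, thereby homotoping $\gamma$ inside $\Omega M$ to a constant loop through loops of length bounded by a fixed multiple of $L$ (at most $\finalbound = 5L$ after careful bookkeeping). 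The key point is that the $L$-shortness arcs come from the canonical surgery construction of Section \ref{sec: L-short}, so they vary continuously in $(t, s)$ even across the degree-$3$ vertices of $T$, where Proposition \ref{prop: neck-pinching L-short} and Remark \ref{remalocalpinch} ensure the $L$-bound persists through the neck-pinchings; this continuity lets the parameter-wise shortening be assembled into a single continuous map $f : (D^2, \partial D^2) \to (\Omega_{\finalbound} M, \Omega_0 M)$ homotopic to $f_0$ relative to $\partial D^2$.

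For noncontractibility, I would check that $f_0$ is a degree-one sweepout of $M$: the evaluation $D^2 \times S^1 \to M$ has degree one because each point of $M$ lies on exactly one leaf $\Sigma_t$ and is traversed by exactly one curve $G_t(s)$. Hence $f_0$ generates $\pi_2(\Omega M, \Omega_0 M) \cong \Z$, and $f$ inherits the same class because the shortening deformation is a homotopy relative to $\partial D^2$. The main obstacle will be the shortening step: producing the deformation continuously in $(t, s)$ with the length bound $\finalbound$ holding uniformly, and the hardest case is at the degree-$3$ vertices where the topology of $\Sigma_t$ changes between $S_1 \# S_2$ and $S_1 \vee S_2$ and the short-arc system must transition smoothly. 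The $L$-shortness machinery in Section \ref{sec: L-short} is tailored precisely so that $L$ is preserved through the surgeries that occur at these vertices, which is what makes the continuous assembly possible.
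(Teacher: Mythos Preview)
Your raw-family construction and the noncontractibility check are essentially the same as the paper's (the paper builds the complex $K$ from rectangles $e_i \times I$ and glues at degree-$3$ vertices just as you describe). The gap is in the shortening step.

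First, observe what the $L$-shortness arcs actually give you. By definition, for a closed curve $\gamma$ on $\Sigma_t$ the arcs $\alpha_\theta$ satisfy $\alpha_0 = \alpha_1 = \{p_t\}$; so the ``replacement loop'' $\alpha_{\theta_0}\cdot\alpha_{\theta_1}^{-1}\cdot\alpha_{\theta_1}\cdots\alpha_{\theta_N}^{-1}$ telescopes to the constant loop, and the homotopy you describe is literally a nullhomotopy of $\gamma$ in $M$. If this could be done continuously in the parameter $(t,s)$, the resulting map $f$ would land in $\Omega_0 M$ and would represent $0$ in $\pi_2(\Omega M,\Omega_0 M)$, contradicting the nontriviality you want. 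So $L$-shortness applied pointwise cannot be the shortening mechanism.

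Second, and relatedly, the $L$-shortness arcs are \emph{not} asserted to vary continuously with $\gamma$. The definition is a pointwise existence statement (``for every closed curve $\gamma$ there exists\ldots''), and the proofs in Section~\ref{sec: L-short} build the arcs from choices of minimizing geodesics and subdivisions that need not depend continuously on the curve. Your claim that ``the $L$-shortness arcs come from the canonical surgery construction, so they vary continuously in $(t,s)$'' is unjustified. What the paper actually does (Proposition~\ref{mainprop: short arcs}) is to invoke $L$-shortness only at the \emph{vertices} of a fine subdivision of $K$, obtaining for each vertex $v_i$ a family of arcs of length $\le L$; it then extends to the $1$-skeleton using Lemma~\ref{lem:family of arcs} (cost $\le 3L$) and to the $2$-cells using Lemma~\ref{Generallemma} (cost $\le 5L$), exploiting the fact that on each small cell the family $F$ is a ``$\delta$-thin strip''. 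This skeleton-by-skeleton interpolation is exactly what produces, for each $x\in K$, a closed arc $H(x,1)$ of length $\le 5L$ that is \emph{not} forced to be constant, and it is what supplies the continuity in $x$ that $L$-shortness alone does not give.
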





To prove Theorem \ref{sweepout of M}.
 we will fist define a family $F: (K, \partial K) \rightarrow (\Omega M,  \Omega_0 M)$, where $K$ is a 2-complex homeomorphic to a disc $D^2$, of potentially very long closed curves, but with the property that each curve $F(x)$ lies on an $L$-short $2$-sphere. The construction of $F$ proceeds as follows. By Theorem ~\ref{thm: L-short foliation} there exists a tree-foliation parametrized by a tree $T$,  $\{ \Sigma_x\}_{x \in T}$, and each 
$\Sigma_x$ is $L$-short with $L<4500$.

\begin{figure}[h!] 
    \centering
 

\begin{tikzpicture} [scale=0.5] 

\draw[thick]  (7,.5) -- (7,10.5);

\draw[->,thick] (1,4) to [bend left=25] (6,4);
\node at (3.5,5.5) {$\large S$};

\node[draw,circle,inner sep=2pt,fill=black] at (-3,10) {};
\node[draw,circle,inner sep=2pt,fill=black] at (-3,8){};

\draw[]   (-3,8)--(-3,10);

\node[draw,circle,inner sep=2pt,fill=black] at (-1,6){};
\node[draw,circle,inner sep=2pt,fill=black] at (-6,5){};

\draw[]   (-3,8)--(-1,6);
\draw[]   (-3,8)--(-6,5);

\node[draw,circle,inner sep=2pt,fill=black] at (-5,2.5){};
\node[draw,circle,inner sep=2pt,fill=black] at (-7,3.5){};

\draw[]    (-6,5)--(-5,2.5);
\draw[]    (-6,5)--(-7,3.5);

\node[draw,circle,inner sep=2pt,fill=black] at (-2,5){};
\node[draw,circle,inner sep=2pt,fill=black] at (0,4){};

\draw[]    (-1,6) -- (-2,5);
\draw[]    (-1,6) -- (0,4);

\node[draw,circle,inner sep=2pt,fill=black] at (-2,2){};
\node[draw,circle,inner sep=2pt,fill=black] at (1,3){};

\draw[]    (0,4) -- (-2,2) ;
\draw[]    (0,4) -- (1,3) ;

\node[draw,circle,inner sep=2pt,fill=black] at (0,1){};
\node[draw,circle,inner sep=2pt,fill=black] at (2,1){};

\draw[]    (2,1) -- (1,3) ;
\draw[]    (0,1) -- (1,3) ;

 \end{tikzpicture}
 
\caption{$S:T \longrightarrow \R$}
\label{Fig.3}
\end{figure}

Recall that a tree foliation 
$\{\Sigma_x \}_{x \in T}$ comes with maps $f:M \to T$  and $S: T \longrightarrow \R $, such that $\Si_x=f^{-1}(x)$ and $S \circ f$ is a Morse function.

\begin{figure}[h!] 
    \centering
\begin{tikzpicture}[scale=.75]

\draw (-1,6) to (1,6);
\draw (-1,3) to (1,3);
\draw (-1,6) to (-1,3);
\draw (1,6) to (1,3);

\draw[dashed] (-1,3) to (-3,2);
\draw[dashed] (-0,3) to (-1,2);
\draw[dashed] (1,3) to (3,2);
\draw[dashed] (0,3) to (1,2);

\draw(-3,2) to (-1,2);
\draw(-3,-1) to (-1,-1);
\draw (-3,-1) to (-3,2);
\draw (-1,-1) to (-1,2);

\draw(3,2) to (1,2);
\draw(3,-1) to (1,-1);
\draw (3,-1) to (3,2);
\draw (1,-1) to (1,2);

\draw[dashed] (-3,-1) to (-4,-2);
\draw[dashed] (-2,-1) to (-2.25,-2);
\draw[dashed] (-2,-1) to (-2,-2);
\draw[dashed] (-1,-1) to (-0.25,-2);

\draw[dashed] (3,-1) to (4,-2);
\draw[dashed] (2,-1) to (2.25,-2);
\draw[dashed] (2,-1) to (2,-2);
\draw[dashed] (1,-1) to (0.25,-2);

\draw(-4,-2) to (-2.25,-2);
\draw(-4,-5) to (-2.25,-5);
\draw(-4,-2) to (-4,-5);
\draw(-2.25,-2) to (-2.25,-5);

\draw(-2,-2) to (-0.25,-2);
\draw(-2,-5) to (-0.25,-5);
\draw(-2,-2) to (-2,-5);
\draw(-0.25,-2) to (-0.25,-5);

\draw(2,-2) to (0.25,-2);
\draw(2,-5) to (0.25,-5);
\draw(2,-2) to (2,-5);
\draw(0.25,-2) to (0.25,-5);

\draw(4,-2) to (2.25,-2);
\draw(4,-5) to (2.25,-5);
\draw(4,-2) to (4,-5);
\draw(2.25,-2) to (2.25,-5);


\draw (7,5) to (9,5);
\draw (7,2) to (9,2);
\draw (7,2) to (7,5);
\draw (9,2) to (9,5);

\draw (7.5,2) to (7.5,5);
\draw (8,2) to (8,5);
\draw (8.5,2) to (8.5,5);
\draw (7,4) to (9,4);
\draw (7,3) to (9,3);

\draw (7,2) to (6,0);
\draw (8,2) to (7,0);
\draw (6,0) to (7,0);
\draw (6.5,0) to (7.5,2);
\draw (6.5,1) to (7.5,1);

\draw (9,2) to (10,0);
\draw (8,2) to (9,0);
\draw (10,0) to (9,0);
\draw (9.5,0) to (8.5,2);
\draw (8.5,1) to (9.5,1);

\draw (6,0) to (5.5,-3);
\draw (6,-3) to (6.5,0);
\draw (5.5,-3) to (6,-3);
\draw (6.5,0) to (7,-3);
\draw (7,-3) to (7.5,-3);
\draw (7.5,-3) to (7,0);

\draw (5.67,-2) to (6.16,-2);
\draw (5.834,-1) to (6.334,-1);

\draw (6.833,-2) to (7.333,-2);
\draw (6.67,-1) to (7.17,-1);

\draw (9,0) to (8.5,-3);
\draw (9,-3) to (9.5,0);
\draw (8.5,-3) to (9,-3);
\draw (9.5,0) to (10,-3);
\draw (10,-3) to (10.5,-3);
\draw (10.5,-3) to (10,0);

\draw (8.67,-2) to (9.16,-2);
\draw (8.834,-1) to (9.334,-1);
\draw (9.833,-2) to (10.333,-2);
\draw (9.67,-1) to (10.17,-1);


\draw[->] (4,1.5) to [bend left=15] (6,1.5);
\node at (6.5,3) {$K$};

\node at (2,4.5) {$e_1\times I$};
\node at (-4,.5) {$e_2\times I$};
\node at (4,.5) {$e_3\times I$};
\node at (-4,-5.7) {$e_4\times I$};
\node at (-1.2,-5.7) {$e_5\times I$};
\node at (1.2,-5.7) {$e_6\times I$};
\node at (3.2,-5.7) {$e_7\times I$};

\end{tikzpicture}
\caption{Constructing $K$}
\label{Fig.4}
\end{figure}
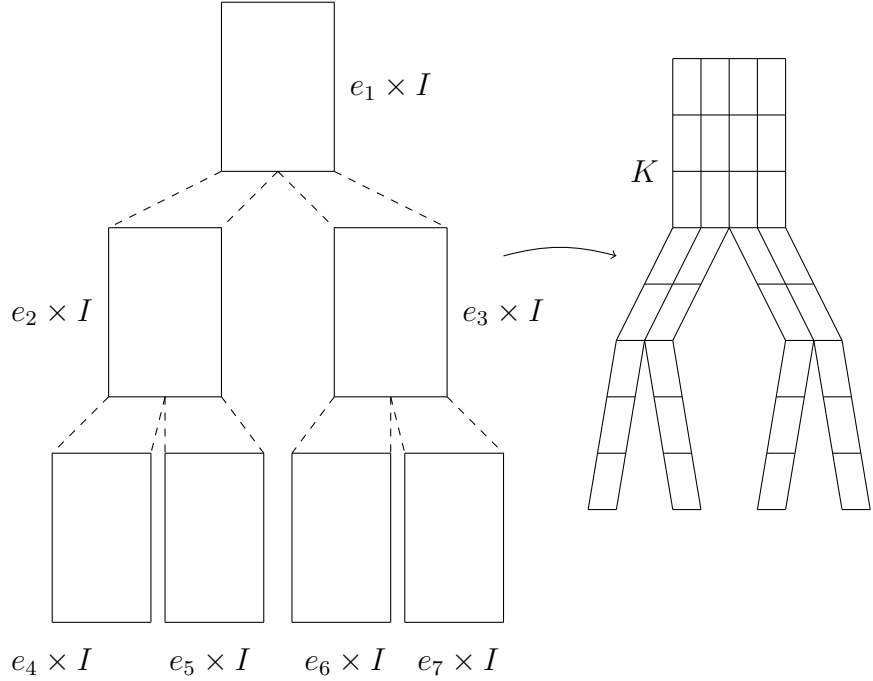

We will next construct a CW complex $K$, corresponding to $T$ in the following way. First for each edge $e_i$ of $T$ we will construct a rectangle $R_i=e_i \times [0,1]$. 
We will now glue the rectangles that correspond to the neighboring edges of $T$ in the following way. Consider an edge $e_i$ and the two edges $e_j$ and $e_k$ situated directly below $e_i$. We will identify the lower edge of $R_i$ with the upper edges of $R_j$ and 
$R_k$ via positive linear maps $p_j: [0,1] \longrightarrow [0, \frac{1}{2}]$, $p_2: [0,1] \longrightarrow [\frac{1}{2},1]$. The resulting complex will be a disk, (see Fig. ~\ref{Fig.4}). 






Let us now describe a possibly long sweepout $F: K \longrightarrow \Omega M$ by closed curves. 
The boundary of $K$ will be mapped to constant curves, while for a given rectangle $e_i \times I$ in $K$ and each horizontal segment $t \times I$, family $F(x), x \in (t,I)$, will be  generated by a sweepout of a sphere $\Sigma_t$, which will vary continuously with the sphere. 

We describe how this family is constructed in the neighbourhood of the bottom side of a rectangle $e_i \times I$, where it is glued 
to the top sides of rectangles $e_j \times I$
and $e_k \times I$. 
Let $v$ denote the vertex
of $T$ where edges $e_i,e_j,e_k$ meet. 
Relabeling if necessary we may assume that 
$\{\Sigma \}_{t\in e_i}$ undergoes a neck-pinch singularity at $v$.
Let $p$
denote the singular point of $\Sigma_v$.
By the definition of tree foliation
for $t \in e_i$ with $t \to v$ there exists
a family of simple closed curves $\gamma_t \subset \Sigma_t$ converging
to $p$ with $L(\gamma_t) \rightarrow 0$.
Let $D^1_t$ and $D^2_t$ denote the two connected components of $\Sigma_t \setminus \gamma_t$.
We fix two continuous families of diffeomorphisms 
$\Phi_t^i:D(1) \rightarrow  D^i_t$, $i=1,2$, where $D(1)$
denotes the unit disc in $\R^2$,
with $\Phi^1_t|_{\partial D(1)} = \Phi^2_t|_{\partial D(1)}$. We
define $F(t,s) = \Phi_t^1(\{x^2+y^2=(2s)^2\})$
for $s \leq \frac{1}{2}$ and
$F(t,s) = \Phi_t^2(\{x^2+y^2=(2-2s)^2\})$
for $s > \frac{1}{2}$. Note that $F(t, \frac{1}{2}) = \gamma_t \to p$  as $e_i \ni t \to v$.
For $t \in e_j$ very close to $v$ we can continuously extend
family $\{ F(v,s)\}_{s \in [0,\frac{1}{2}]}$
to a sweepout of $\Sigma_t$; and for 
$t \in e_k$ very close to $v$ we can continuously extend
family $\{ F(v,s)\}_{s \in [\frac{1}{2},1]}$
to a sweepout of $\Sigma_t$.
The construction is illustrated in Fig. ~\ref{Fig.8}.

\begin{figure}[h!] 
    \centering
\begin{tikzpicture}[scale=.5]
\draw[name path=sphere 1]  plot [smooth cycle,tension=1] coordinates {(-10,0) 
    (0,8) (10,0) (0,-9)};    
\path[name path=line 1] (-7,-9) -- (-7,0);
\path [name intersections={of=sphere 1 and line 1,by=A}];
\path[name path=line 2] (-7,9) -- (-7,0);
\path [name intersections={of=sphere 1 and line 1,by=A}];
\path [name intersections={of=sphere 1 and line 2,by=B}];
\draw[red] (A) to [bend right=30] (B);
\draw[densely dotted, red] (A) to [bend left=30] (B);

\path[name path=line 3] (-3.5,-9) -- (-3.5,0);
\path[name path=line 4] (-3.5,9) -- (-3.5,0);
\path [name intersections={of=sphere 1 and line 3,by=C}];
\path [name intersections={of=sphere 1 and line 4,by=D}];
\draw[red] (C) to [bend right=30] (D);
\draw[densely dotted, red] (C) to [bend left=30] (D);

\path[name path=line 5] (-0,-9) -- (-0,0);
\path[name path=line 6] (-0,9) -- (-0,0);
\path [name intersections={of=sphere 1 and line 5,by=E}];
\path [name intersections={of=sphere 1 and line 6,by=F}];
\draw[red] (E) to [bend right=20] (F);
\draw[densely dotted, red] (E) to [bend left=20] (F);

\path[name path=line 7] (3,-9) -- (3,0);
\path[name path=line 8] (3,9) -- (3,0);
\path [name intersections={of=sphere 1 and line 7,by=G}];
\path [name intersections={of=sphere 1 and line 8,by=H}];
\draw[red] (G) to [bend right=30] (H);
\draw[densely dotted, red] (G) to [bend left=30] (H);

\path[name path=line 9] (7,-9) -- (7,0);
\path[name path=line 10] (7,9) -- (7,0);
\path [name intersections={of=sphere 1 and line 9,by=I}];
\path [name intersections={of=sphere 1 and line 10,by=J}];
\draw[red] (I) to [bend right=30] (J);
\draw[densely dotted, red] (I) to [bend left=30] (J);

    \draw[name path=sphere 2]  plot [smooth cycle,tension=0.8] coordinates {(-8,0) 
    (-5,5) (.25,2) (5,5) (8,0) (5,-5) (0,-2) (-5,-5) }; 

    \path[name path=line 11] (-4,-6) -- (-4,0);
    \path[name path=line 12] (-4,6) -- (-4,0);  
    \path [name intersections={of=sphere 2 and line 11,by=K}];
    \path [name intersections={of=sphere 2 and line 12,by=L}];
    \draw[red] (K) to [bend right=30] (L);
\draw[densely dotted, red] (K) to [bend left=20] (L);

\path[name path=line 13] (4,-6) -- (4,0);
    \path[name path=line 14] (4,6) -- (4,0);  
    \path [name intersections={of=sphere 2 and line 13,by=M}];
    \path [name intersections={of=sphere 2 and line 14,by=N}];
    \draw[red] (M) to [bend right=20] (N);
\draw[densely dotted, red] (M) to [bend left=20] (N);


\draw[name path=sphere 3]  plot [smooth cycle,tension=0.8] coordinates {(-7,0) 
    (-5,3) (-1.5,.8)  (0,0) (-1.5,-.8) (-5,-3)};

\draw[name path=sphere 4]  plot [smooth cycle,tension=.8] coordinates {(7,0) 
    (5,3) (1.5,.8)  (0,0) (1.5,-.8) (5,-3)};

\path [name intersections={of=sphere 3 and line 11,by=O}];
    \path [name intersections={of=sphere 3 and line 12,by=P}];
    \draw[red] (O) to [bend right=30] (P);
\draw[densely dotted, red] (O) to [bend left=20] (P);

\path[name path=line 13] (4,-6) -- (4,0);
    \path[name path=line 14] (4,6) -- (4,0);  
    \path [name intersections={of=sphere 4 and line 13,by=Q}];
    \path [name intersections={of=sphere 4 and line 14,by=R}];
    \draw[red] (Q) to [bend right=20] (R);
\draw[densely dotted, red] (Q) to [bend left=20] (R);


\draw[name path=sphere 5] (-4.3,0) circle[radius=55pt];
\draw[name path=sphere 6] (4.3,0) circle[radius=55pt];

\path [name intersections={of=sphere 5 and line 11,by=S}];
    \path [name intersections={of=sphere 5 and line 12,by=T}];
    \draw[red] (S) to [bend right=30] (T);
\draw[densely dotted, red] (S) to [bend left=20] (T);

\path [name intersections={of=sphere 6 and line 13,by=U}];
    \path [name intersections={of=sphere 6 and line 14,by=V}];
    \draw[red] (U) to [bend right=10] (V);
\draw[densely dotted, red] (U) to [bend left=10] (V);


\filldraw[red] (-10,0) circle[radius=3pt];
\filldraw[red] (-8,0)
circle[radius=3pt];
\filldraw[red] (-7,0)
circle[radius=3pt];
\filldraw[red] (-6.25,0)
circle[radius=3pt];
\filldraw[red] (-2.35,0)
circle[radius=3pt];
\filldraw[red] (2.35,0)
circle[radius=3pt];
\filldraw[red] (6.25,0)
circle[radius=3pt];
\filldraw[red] (10,0) 
circle[radius=3pt];
\filldraw[red] (8,0)
circle[radius=3pt];
\filldraw[red] (7,0)
circle[radius=3pt];
\filldraw[red] (0,0)
circle[radius=3pt];

    \end{tikzpicture}
\caption{Sweepout of a sphere}
\label{Fig.8}
\end{figure}
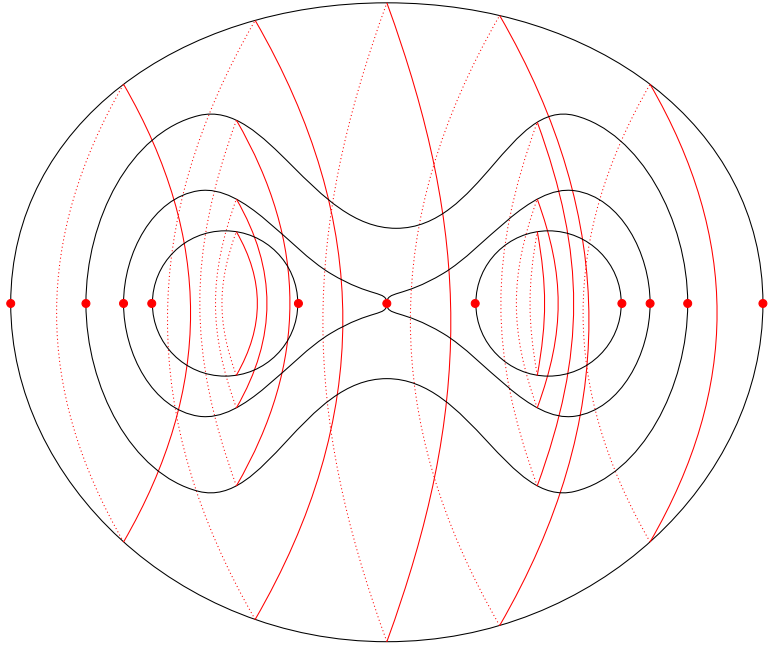

Once we defined $F(x)$ for all $x \in U_v \times I$, where $U_v$ denotes a neighbourhood of a vertex $v \in T$, we can extend to the rest of $K$, since for each edge $e_i$ this amounts to interpolating between two homotopic families in $(\Om M, \Om_0 M)$.

%

This finishes the construction of homotopically non-trivial map $F: (K, \partial K) \rightarrow ( \Omega M,  \Omega_0 M)$, where each curve $F(x)$ lies on an $L$-short sphere.

We will now show that this family can be homotopically 
deformed to a family of short closed curves.
Let $\Pi_x M$ denote the space piecewise smooth maps
from $[0,1]$ to $M$ of length $\leq x$.
Here and below $R(\delta)$ will denote any function of $\delta$ that tends to $0$ as $\delta$ tends to $0$. 

\begin{proposition} \label{mainprop: short arcs}
    Suppose $F: (K, \partial K) \rightarrow ( \Omega M,  \Omega_0 M)$ is a family of closed curves, such that
    each $F(x)$ lies on an $L$-short sphere.
    For every $\delta>0$ there exists a map $H: (K \times [0,1], \partial K  \times [0,1]) 
    \rightarrow (\Pi_{5L +R(\delta)} M, \Pi_0M)$ with the following properties:
\begin{enumerate}
    \item $H(x,0)$ is a constant map with image $F(x)(0)$;
    \item  
    $H(x,t)(0) = F(x)(0)$ and $H(x,t)(1) = F(x)(t)$.
\end{enumerate}
\end{proposition}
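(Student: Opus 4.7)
The plan is to construct $H$ by extending the pointwise applications of $L$-shortness over a fine triangulation of $K$, interpolating between vertex-data with controlled length. The extension proceeds skeletally (0-skeleton, then 1-skeleton, then 2-skeleton), and the constant $5L$ will emerge from the combinatorics of the 2-dimensional extension. First, by uniform continuity of $F$ and compactness of $K$, choose a triangulation $\mathcal{T}$ of $K$ of mesh so small that for every pair of points $x, y$ in a common closed simplex, $\sup_{\tau \in [0,1]} d_M(F(x)(\tau), F(y)(\tau)) < R(\delta)$, where $R(\delta)$ is much smaller than the convexity radius of $M$. Any two such nearby points are then joined by a unique minimizing geodesic of length $\leq R(\delta)$, depending continuously on endpoints. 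At each vertex $v$ of $\mathcal{T}$, the curve $F(v)$ lies on an $L$-short sphere, so $L$-shortness yields a smooth family $\{\alpha^v_t\}_{t \in [0,1]}$ of arcs with $\alpha^v_t(0) = F(v)(0)$, $\alpha^v_t(1) = F(v)(t)$, $\alpha^v_0 \equiv \alpha^v_1$ constant, and each of length $\leq L$; set $H(v, t) = \alpha^v_t$.

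On an edge $[v_0, v_1]$ of $\mathcal{T}$ parametrized by $s \in [0,1]$, consider the ``transported'' arcs
\[ \tilde\alpha^{v_i}_t(s) := \sigma_{F(x(s))(0),\, F(v_i)(0)} \cdot \alpha^{v_i}_t \cdot \sigma_{F(v_i)(t),\, F(x(s))(t)}, \quad i \in \{0,1\}, \]
each going from $F(x(s))(0)$ to $F(x(s))(t)$ and of length $\leq L + 2R(\delta)$. I interpolate between the two via a \emph{bow-tie} concatenation
\[ \sigma \cdot \alpha^{v_0}_t \cdot \mu(t) \cdot (\alpha^{v_1}_t)^{-1} \cdot \mu' \cdot \alpha^{v_1}_t \cdot \tau(t), \]
of length $\leq 3L + O(R(\delta))$, slid continuously into place as $s$ traverses the edge.

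For a 2-simplex $\Delta = [v_0, v_1, v_2]$ the map $H$ is then defined on $\partial \Delta \times [0,1]$; I extend across $\Delta \times [0,1]$ via a \emph{five-term concatenation} visiting all three $\alpha^{v_i}_t$'s, with $\alpha^{v_0}$ and $\alpha^{v_1}$ each appearing twice (once forward, once backward):
\[ \sigma \cdot \alpha^{v_0}_t \cdot \mu_1(t) \cdot (\alpha^{v_1}_t)^{-1} \cdot \mu_2 \cdot \alpha^{v_2}_t \cdot \mu_3(t) \cdot (\alpha^{v_0}_t)^{-1} \cdot \mu_4 \cdot \alpha^{v_1}_t \cdot \tau(t), \]
of total length $\leq 5L + O(R(\delta))$. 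The short connectors $\mu_i, \sigma, \tau$ are tuned continuously in $x \in \Delta$ so that the restriction to each edge of $\partial \Delta$ matches the previously defined bow-tie.

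To enforce the boundary conditions: at $t = 0$ every $\alpha^v_0$ is constant, so the five-term concatenation collapses to a short loop lying in a convex ball about $F(x)(0)$; contracting this loop in a small $t$-collar gives $H(x, 0) \equiv F(x)(0)$ without affecting the overall length bound. For $x \in \partial K$, the curve $F(x)$ is constant in $\tau$, so the transported arcs are themselves constants and the construction delivers $H(x, t) \in \Pi_0 M$. \textbf{Main obstacle}: the delicate combinatorial bookkeeping required to match the 1-skeleton bow-ties to the 2-skeleton five-term concatenation on common faces, and to choose all short connectors continuously in all parameters. The constant $5L$ is inherent to the strategy: on a 2-simplex, continuously interpolating three distinct $3L$-long edge-bow-ties forces an interior arc passing through all three vertex arcs $\alpha^{v_i}_t$ with one doubled, giving exactly five $L$-bounded pieces. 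This follows the deformation technique of \cite{NR13}, adapted here to the $L$-short setting.
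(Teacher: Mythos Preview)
Your overall skeleton-by-skeleton strategy, the use of $L$-shortness at the vertices, and the target constant $5L$ all match the paper's proof. However, your 2-skeleton step contains a genuine gap that you yourself flag but do not close. You write down a five-term word and then state that the ``Main obstacle'' is matching it to the three edge bow-ties on $\partial\Delta$; this is precisely the content of the proof, and it is left undone. In fact your specific five-term word $\alpha^{v_0}(\alpha^{v_1})^{-1}\alpha^{v_2}(\alpha^{v_0})^{-1}\alpha^{v_1}$ does not restrict to the three bow-ties you prescribed: on the edge $[v_1,v_2]$, for instance, collapsing a subword does not produce $\alpha^{v_1}(\alpha^{v_2})^{-1}\alpha^{v_2}$. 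There is also a missing mechanism on edges: ``slid continuously into place'' only becomes a homotopy once you observe that the loop $\alpha^{v_0}_t\,\mu(t)\,(\alpha^{v_1}_t)^{-1}$ contracts through short loops \emph{by decreasing the parameter $t$}, since each $\alpha^{v_i}_0$ is constant. This $t$-contraction is the engine behind Lemmas~\ref{pathhomotopy} and~\ref{shortcurves}; concatenations alone do not interpolate.

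The paper sidesteps the 2-cell matching problem entirely. It subdivides $K$ into small \emph{rectangles} that are $\delta$-thin long strips, and for each 2-cell it spends one \emph{additional} application of $L$-shortness at the center point $\tilde p$, producing central arcs $a^p_t$ of length $\le L$. Lemma~\ref{Generallemma} then extends the boundary data (already of length $\le 3L$ from the edge step) across the cell by interpolating \emph{radially}: along each segment from $\tilde p$ to a boundary point one runs the same quadrilateral argument as on edges, between the central arc and the given boundary arc. This adds $2L$ to the edge bound and tautologically agrees with the prescribed boundary data, giving $3L+2L=5L$. The conceptual point is that one extra invocation of $L$-shortness in the interior reduces the 2-cell extension to a family of one-parameter problems along rays, avoiding any three-vertex combinatorial matching.
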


In other words, for fixed $x$, $H(x,t)$ is a 1-parameter family of arcs connecting $F(x)(0)$ to all other points of $F(x)$. 
$\{ H(x,1) \}_{x \in K}$ is then a family 
of short closed curves.

\begin{proof}[Proof of Theorems \ref{sweepout of M}  and Theorem \ref{thm:mainC} from
Proposition \ref{mainprop: short arcs}]
    Let $F:(K, \partial K) \rightarrow ( \Omega M,  \Omega_0 M)$ be the non-contractible family of closed
    curves constructed above and let $H(x,t)$ be the corresponding family 
    of arcs from Proposition \ref{sweepout of M}.
   Since $H(x,1)(0) = H(x,1)(1)$, identifying the endpoints of $[0,1]$ we obtain a map $\tilde{F}:(K, \partial K) \rightarrow ( \Omega M,  \Omega_0 M)$,
   where $\tilde{F}(x)$ has the same image and length as $H(x,1)$. In particular, the length of $\tilde{F}(x) $ is bounded by $5L + R(\delta) \leq \finalbound$ for all $x \in K$. 

   We have that $F$ is homotopic to $\tilde{F}$
   with the homotopy given by the family of closed curves
   \begin{equation*}
       h(x,t) = H(x,t) * F(x)|_{[t,1]}
   \end{equation*}
   Here $*$ denotes concatenation of two paths.
   This proves Theorem \ref{sweepout of M}.

   It follows from the Morse theory on the space of closed curves \cite{bott1982} that there exists a closed geodesic $\gamma$ in $M$ of length 
   less than $\finalbound + R(\delta)$. Choosing $\delta$ sufficiently small we obtain a bound of $\finalbound$.
\end{proof}

In the rest of this section we prove Proposition \ref{mainprop: short arcs}.


The proof will follow from a 
 construction that first appeared in ~\cite{NR13}, where it was done in the case of based point loops or paths spaces on $M$. Here we modify this construction for the free loop space. The result will follow from the sequence of lemmas below. 

\begin{lemma} \label{pathhomotopy}
Let us consider a digon formed by paths $e_1, e_2$  connecting points $p_1, p_2$ of lengths $l_1, l_2$ respectively, (see fig. ~\ref{Fig.13} (a)). Suppose loop $\alpha=e_1*\bar{e}_2$ based at $p_1$, (see fig. ~\ref{Fig.13} (b)) can be contracted over loops $\alpha_\tau$ based at $p_1$ of length at most $l_3$, (see fig. ~\ref{Fig.13} (c)). Then $e_1$
is path-homotopic to $e_2$ over curves of length at most $\min \{l_1, l_2\}+ l_3$.

Moreover, a parametric version of this Lemma holds: 

Suppose we are given a manifold $X$ and families of paths
$\{e_1^x\}_{x \in X}$, $\{e_2^x\}_{x \in X}$  connecting points $p_1(x), p_2(x)$ of lengths $l_1(x), l_2(x)$ respectively, and suppose there exists a family of contractions of loops $\alpha(x)=e_1^x*\bar{e}_2^x$ based at $p_1(x)$,  over loops based at $p_1(x)$ of length at most $l_3(x)$. Then there exists a family of path homotopies of $e_1^x$
to $e_2^x$ over curves of length at most 
$\min_{i=1,2} \max_{x \in X } (l_i(x)+ l_3(x))$.

\end{lemma}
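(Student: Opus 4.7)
The plan is to construct the path homotopy as a concatenation of two explicit families: a \emph{finger-growth} stage that appends an out-and-back spike along the shorter of $e_1, e_2$, followed by a \emph{contraction} stage that applies the given null-homotopy $\{\alpha_\tau\}$. A useful preliminary observation is that $l_3 \geq l_1 + l_2$, since $\alpha_0 = \alpha$ itself occurs among the $\alpha_\tau$ and has length $l_1 + l_2$.

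Assume first $l_2 \le l_1$. For $s \in [0,1]$ I would set
\[
\eta_s \;=\; e_1 \cdot (\bar e_2|_{[0,s]}) \cdot \overline{(\bar e_2|_{[0,s]})},
\]
i.e.\ $e_1$ with an out-and-back finger along $\bar e_2$ appended at $p_2$. Then $\eta_0 = e_1$, $\eta_1 = e_1 \cdot \bar e_2 \cdot e_2$, the endpoints $p_1, p_2$ are fixed throughout, and the length of $\eta_s$ is $l_1 + 2 s l_2 \le l_1 + 2 l_2 \le l_2 + l_3$. Next, for $\tau \in [0,1]$ put $\gamma_\tau = \alpha_\tau \cdot e_2$. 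Then $\gamma_0 = \alpha \cdot e_2 = \eta_1$, $\gamma_1 = e_2$, and each $\gamma_\tau$ has length at most $l_3 + l_2$. Concatenating $\{\eta_s\}$ with $\{\gamma_\tau\}$ yields a path-homotopy from $e_1$ to $e_2$ through curves of length $\le l_2 + l_3 = \min\{l_1, l_2\} + l_3$.

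When $l_1 \le l_2$, I would run the mirror construction: swap $e_1 \leftrightarrow e_2$, and replace $\alpha_\tau$ with the reverse contraction $\bar\alpha_\tau$ (which null-homotopes $\bar\alpha = e_2 \cdot \bar e_1$ through loops at $p_1$ of length still bounded by $l_3$). The same argument then produces a path-homotopy of length $\le l_1 + l_3$, and the two cases combine to give the claimed bound $\min\{l_1,l_2\} + l_3$.

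For the parametric version, every step in the above construction depends continuously on $x \in X$ (through $\alpha_\tau(x)$, $e_1^x$, $e_2^x$), so the pointwise construction furnishes a continuous family of path-homotopies. The only subtlety is that to keep everything continuous one must pick a single index $i \in \{1, 2\}$ uniformly over $X$, which is exactly why the resulting bound takes the form $\min_{i=1,2}\max_{x \in X}(l_i(x) + l_3(x))$. I do not anticipate any real obstacle; the entire argument amounts to careful bookkeeping of finger lengths together with the preliminary inequality $l_3 \ge l_1 + l_2$.
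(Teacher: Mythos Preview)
Your proof is correct and follows essentially the same route as the paper: grow an out-and-back finger to pass from $e_1$ to $e_1*\bar e_2*e_2=\alpha*e_2$, then apply the contraction $\alpha_\tau*e_2$, with the mirror construction giving the other bound. Your explicit observation $l_3\ge l_1+l_2$ is exactly what is needed to control the finger-growth stage; the paper uses this implicitly when it asserts the $l_2+l_3$ bound without further comment.
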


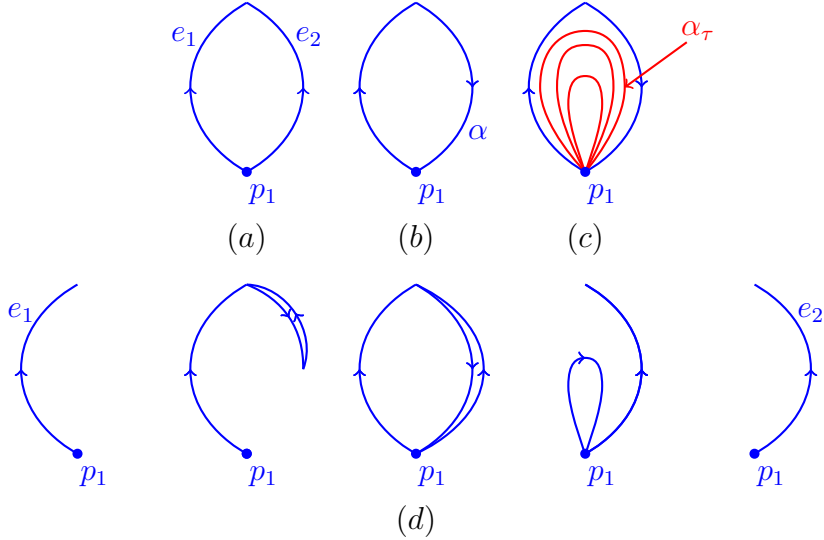
\begin{figure}[h!] 
    \centering 
\begin{tikzpicture}[scale=0.75]
    \begin{scope}[thick,decoration={
    markings,
    mark=at position 0.5 with {\arrow{>}}}
    ] 
    \draw[postaction={decorate},blue] plot [smooth,tension=1.2] coordinates {(-3,-1.5) (-4,0) (-3,1.5)};
    \draw[postaction={decorate},blue] plot [smooth,tension=1.2] coordinates {(-3,-1.5) (-2,0) (-3,1.5)};

    \begin{scope}[thick,decoration={
    markings,
    mark=at position 0.5 with {\arrow{>}}}
    ] 
    \draw[postaction={decorate},blue] plot [smooth,tension=1.2] coordinates {(0,-1.5) (-1,0) (0,1.5)};
    \draw[postaction={decorate},blue] plot [smooth,tension=1.2] coordinates {(0,1.5) (1,0) (0,-1.5)};
    \end{scope}

    \begin{scope}[thick,decoration={
    markings,
    mark=at position 0.5 with {\arrow{>}}}
    ] 
    \draw[postaction={decorate},blue] plot [smooth,tension=1.2] coordinates {(3,-1.5) (2,0) (3,1.5)};
    \draw[postaction={decorate},blue] plot [smooth,tension=1.2] coordinates {(3,1.5) (4,0) (3,-1.5)};

    \draw[red] plot [smooth,tension=1] coordinates {(3,-1.5) (3.7,0) (3,1) (2.2,0) (3,-1.5)}; 
    \draw[red] plot [smooth,tension=1] coordinates {(3,-1.5) (3.5,0) (3,.75) (2.5,0) (3,-1.5)};
    \draw[red] plot [smooth,tension=0.9] coordinates {(3,-1.5) (3.3,-.3) (3,.2) (2.7,-.3) (3,-1.5)};
    \end{scope}

    \filldraw[blue] (3,-1.5) circle[radius=2pt];
    \filldraw[blue] (0,-1.5) circle[radius=2pt];
    \filldraw[blue] (-3,-1.5) circle[radius=2pt];

    \node[blue] at (3.3,-1.9) {$p_1$};
    \node[blue] at (0.3, -1.9) {$p_1$};
    \node[blue] at (-2.7,-1.9) {$p_1$}; 

    \node[blue] at (-4.1,0.9) {$e_1$}; 
    \node[blue] at (-1.9,0.9) {$e_2$}; 

    \node[blue] at (1.1,-0.8) {$\alpha$}; 

    \node[red] at (5,1) {$\alpha_\tau$};
    \draw[red,->] (4.8,0.8) to (3.7,0);

    \node at (3,-2.7) {$(c)$};
    \node at (0, -2.7) {$(b)$};
    \node at (-3,-2.7) {$(a)$};

    \begin{scope}[thick,decoration={
    markings,
    mark=at position 0.5 with {\arrow{>}}}
    ] 
    \draw[postaction={decorate},blue] plot [smooth,tension=1.2] coordinates {(-3,-6.5) (-4,-5) (-3,-3.5)};
    \draw[postaction={decorate},blue] (-3,-3.5) to [bend left=35] (-2,-5);
    \draw[postaction={decorate},blue] (-2,-5) to [bend right=55] (-3,-3.5);

    \filldraw[blue] (-3,-6.5) circle[radius=2pt];

    \draw[postaction={decorate},blue] plot [smooth,tension=1.2] coordinates {(0,-6.5) (-1,-5) (0,-3.5)};
    \draw[postaction={decorate},blue] plot [smooth,tension=1.2] coordinates { (0,-3.5) (1,-5) (0,-6.5)};
    \draw[postaction={decorate},blue] plot [smooth,tension=1.2] coordinates { (0,-6.5)  (1.2,-5) (0,-3.5) };
    
    \filldraw[blue] (-0,-6.5) circle[radius=2pt];
    \node[blue] at (0.3,-6.9) {$p_1$};

    \filldraw[blue] (-3,-6.5) circle[radius=2pt];
    \node[blue] at (-2.7,-6.9) {$p_1$};

     \draw[postaction={decorate},blue] plot [smooth,tension=1.2] coordinates {(-6,-6.5) (-7,-5) (-6,-3.5)};

   \filldraw[blue] (-6,-6.5) circle[radius=2pt];
    \node[blue] at (-5.7,-6.9) {$p_1$};

    \draw[postaction={decorate},blue] plot [smooth,tension=1.2] coordinates {(3,-6.5) (4,-5) (3,-3.5)};

   \filldraw[blue] (3,-6.5) circle[radius=2pt];
    \node[blue] at (3.3,-6.9) {$p_1$};

    \draw[postaction={decorate},blue] plot [smooth,tension=1.2] coordinates {(3,-6.5) (4,-5) (3,-3.5)};

     \draw[postaction={decorate},blue] plot [smooth,tension=0.9] coordinates {(3,-6.5) (2.7,-5.3) (3,-4.8)  (3.3,-5.3) (3,-6.5)};

    \draw[postaction={decorate},blue] plot [smooth,tension=1.2] coordinates {(6,-6.5) (7,-5) (6,-3.5)};

    \filldraw[blue] (6,-6.5) circle[radius=2pt];
    \node[blue] at (6.3,-6.9) {$p_1$};
    \node at (0, -7.7) {$(d)$};

    \node[blue] at (-7,-4) {$e_1$}; 
    \node[blue] at (7,-4) {$e_2$};

    \end{scope}   
    \end{scope}
    \end{tikzpicture}

\caption{Constructing path homotopy}
\label{Fig.13}

\end{figure}

\begin{proof}
 Assume we are in the non-parametric case first. Consider the following path-homotopy: $$e_1 \sim e_1 * \bar{e}_2 *e_2 \sim \alpha_t *e_2 \sim e_2$$
 (see fig. ~\ref{Fig.13} (d)).
 Note that the length of curves in this homotopy is at most $l_2+l_3$. Similarly, one can construct a homotopy of length at most $l_1+l_3$. 
 
 Applying the same construction to parametric families we obtain the desired families of path homotopies.
\end{proof}

\begin{lemma} \label{shortcurves}
Consider a quadrilateral formed by two pairs of opposite sides: 
$e_i=e_i(t)$ and $\sigma_i=\sigma_i(s)$, $i=0,1$, (see fig. ~\ref{Fig.5} (a)). Let $l_i = length (e_i)$ and 
$m_i = length (\sigma_i)$. Consider a loop $\alpha$ based at $\sigma_0(\frac{1}{2})=p$ corresponding to the concatenation $\bar{\sigma}_0|_{[\frac{1}{2}, 1]} * e_0 * \sigma_1 * \bar{e}_1 * \bar{\sigma}_0|_{[0,\frac{1}{2}]}$. (Here and below $\bar{\beta}$ will denote the curve $\beta$ travelled in the opposite direction.)
Suppose $\alpha$ is contractible to $p$ along loops $\alpha_\tau$ based at $p$ of length at most $l_3$. Then there exists a one-parameter family of curves 
$\gamma_s$ connecting $\sigma_0(s)$ with a corresponding $\sigma_1(s)$ of length at most $\min\{l_1, l_2\}+l_3+2(m_1 + m_2)$, such that $\gamma_0=e_0$ and $\gamma_1=e_1$.

Moreover, the corresponding parametric version of this Lemma holds.
\end{lemma}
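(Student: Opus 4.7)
My plan is to reduce Lemma \ref{shortcurves} to Lemma \ref{pathhomotopy} by reinterpreting the quadrilateral as a digon based at $p=\sigma_0(1/2)$. I introduce the two auxiliary paths
\[
P_0 = \bar{\sigma}_0|_{[1/2,1]} * e_0 * \sigma_1|_{[0,1/2]}, \qquad P_1 = \sigma_0|_{[1/2,1]} * e_1 * \bar{\sigma}_1|_{[0,1/2]},
\]
both going from $p$ to the midpoint $\sigma_1(1/2)$. A direct concatenation check shows $P_0 * \bar{P}_1 = \alpha$, so by hypothesis this loop at $p$ is contractible through loops of length at most $l_3$. Applying Lemma \ref{pathhomotopy} to $P_0$ and $P_1$, I obtain a path-homotopy $\{h_\tau\}_{\tau \in [0,1]}$ from $P_0$ to $P_1$ with fixed endpoints at $p$ and $\sigma_1(1/2)$, through paths of length at most
\[
\min\{L(P_0),L(P_1)\} + l_3 \;\le\; \min\{l_1,l_2\} + (m_1+m_2) + l_3.
\]

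To produce a family $\gamma_s$ with endpoints at $\sigma_0(s)$ and $\sigma_1(s)$ instead of $p$ and $\sigma_1(1/2)$, I prepend and append short subarcs of $\sigma_0$ and $\sigma_1$ that move the endpoints:
\[
\gamma_s \;=\; A_s * h_{\tau(s)} * B_s,
\]
where $A_s \subset \sigma_0$ connects $\sigma_0(s)$ to $p$ (oriented appropriately depending on whether $s \le 1/2$ or $s \ge 1/2$), $B_s \subset \sigma_1$ connects $\sigma_1(1/2)$ to $\sigma_1(s)$, and $\tau \colon [0,1] \to [0,1]$ is a homeomorphism with $\tau(0)=0$ and $\tau(1)=1$. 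Since $L(A_s) \le m_1$ and $L(B_s) \le m_2$, each $\gamma_s$ has length at most $\min\{l_1,l_2\} + l_3 + 2(m_1+m_2)$, matching the stated bound.

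The only subtlety is that plugging in $s=0$ or $s=1$ into the formula for $\gamma_s$ recovers $e_0$ and $e_1$ only up to two small backtracks along $\sigma_0$ and $\sigma_1$; for example, at $s=0$ the formula gives $\sigma_0|_{[0,1/2]} * \bar{\sigma}_0|_{[1/2,1]} * e_0 * \sigma_1|_{[0,1/2]} * \bar{\sigma}_1|_{[1/2,1]}$, which reduces to $e_0$ by canceling the two backtracks at $\sigma_0(0)$ and $\sigma_1(0)$. I resolve this by inserting a length-nonincreasing reparametrization near the ends $s=0,1$ that cancels these backtracks, which clearly does not affect the length bound above. The parametric version is obtained by carrying out the same construction pointwise in the external parameter and invoking the parametric version of Lemma \ref{pathhomotopy}, with the continuous dependence on $x$ following from the fact that every ingredient of the construction (the paths $P_0(x), P_1(x)$, the homotopies $h_\tau(x)$, and the detours $A_s(x), B_s(x)$) depends continuously on $x$.
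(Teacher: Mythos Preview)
Your proof is correct and follows essentially the same approach as the paper: both reduce to Lemma~\ref{pathhomotopy} applied to the identical pair of paths $P_0,P_1$ (the paper calls them $b_0,b_1$) from $p$ to $\sigma_1(1/2)$, and then pad with subarcs of $\sigma_0,\sigma_1$ to move the endpoints to $\sigma_0(s),\sigma_1(s)$. The only cosmetic difference is that the paper assembles $\gamma_s$ in three pieces (slide along $e_0$ for $s\le 1/2$, insert the path-homotopy at $s=1/2$, slide along $e_1$ for $s\ge 1/2$), which makes $\gamma_0=e_0$ and $\gamma_1=e_1$ hold on the nose, whereas your single formula $A_s*h_{\tau(s)}*B_s$ requires the extra backtrack-cancellation step at the ends.
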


\begin{proof}
    First note that $b_0=\bar{\sigma}_0|_{[\frac{1}{2},1]} * e_0 * \sigma_1|_{[0,\frac{1}{2}]}$ is path-homotopic to 
    $b_1=\sigma_0|_{[\frac{1}{2},1]} * e_1 * \bar{\sigma}_1|_{[0,\frac{1}{2}]}$ by Lemma ~\ref{pathhomotopy} over the curves of length at most $\min\{l_1, l_2\}+l_3 + 2(m_1+m_2)$. One can, for example, consider  the following path-homotopy: $b_0 \sim b_0 * \bar{b}_1 * b_1 \sim \alpha_\tau * b_1 \sim b_1$. 
   Parametrize the curves in this path-homotopy by $x$ to obtain a $1$-parameter family of curves $b_x$. 
    
    Now, for each $s \in [0,1]$ let us connect the ``opposite" points, $\sigma_0(s)$ with $\sigma_1(s)$ as follows. 
For $s \in [0,\frac{1}{2}]$ One can connect $\sigma_0(s)$ with $\sigma_1(s)$ by the curves 
$\bar{\sigma}_0|_{[s,0]} * e_0 *\sigma_1|_{[0,s]}$, while for $s \in [\frac{1}{2},1]$ (see fig. ~\ref{Fig.5} (b)-(d)). We can connect
the opposite points by curves $\sigma_0|_{[s,1]}*e_1*\bar{\sigma}_1|_{[1,s]}$, (see fig. ~\ref{Fig.5} (f)-(h)). Note that this will 
result in a possible discontinuity at $s=\frac{1}{2}$. However, we will fill this discontinuity by the 
family of curves defined in the paragraph above, (see fig. ~\ref{Fig.5} (e)). Thus, the curves connecting $\sigma_0(\frac{1}{2})$
and $\sigma_1(\frac{1}{2})$ will not be unique and we can take $s=f(r)$, $r \in [0,1]$ for some 
non decreasing function $f(r)$. 

Figure ~\ref{Fig.5} demonstrates a family $\gamma_s$ continuously connecting the "opposite" points of $\sigma_1, \sigma_2$ starting with $e_0$ and ending with $e_1$. 

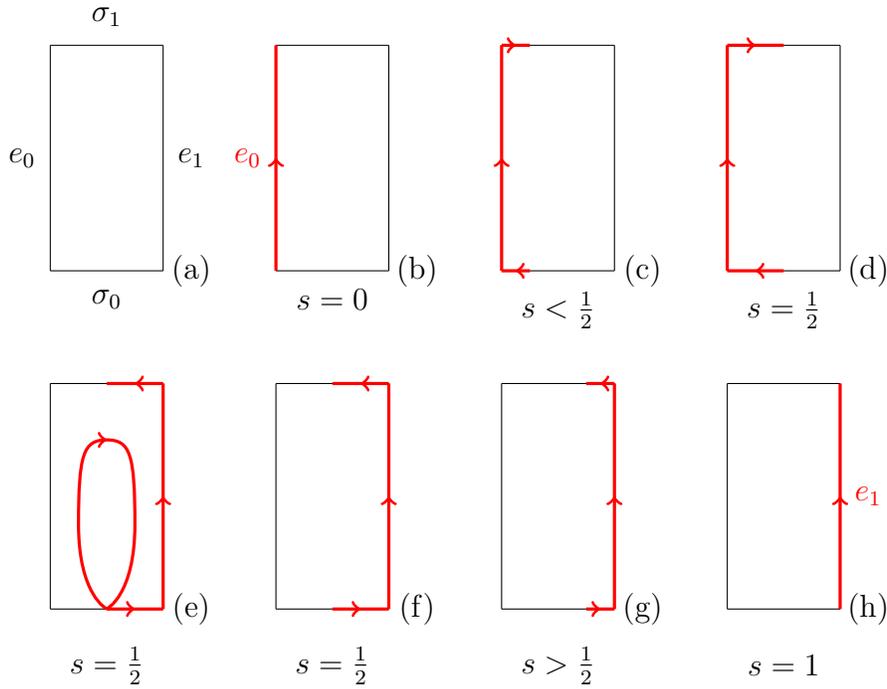
\begin{figure}[h!] 
    \centering
    \begin{tikzpicture} [scale=.75]
    

\draw (-7,5) to (-5,5);
\draw (-3,5) to (-1,5);
\draw (-7,5) to (-7,1);
\draw (-5,5) to (-5,1);

\draw (-7,1) to (-5,1);
\draw (-3,1) to (-1,1);
\draw (-3,1) to (-3,5);
\draw (-1,1) to (-1,5);

\draw (7,1) to (5,1);
\draw (3,1) to (1,1);
\draw (3,1) to (3,5);
\draw (5,1) to (5,5);

\draw (7,5) to (5,5);
\draw (3,5) to (1,5); 
\draw (1,1) to (1,5);
\draw (7,1) to (7,5);


\draw (-7,-1) to (-5,-1);
\draw (-3,-1) to (-1,-1);
\draw (-7,-1) to (-7,-5);
\draw (-5,-1) to (-5,-5);

\draw (-7,-5) to (-5,-5);
\draw (-3,-5) to (-1,-5);
\draw (-3,-5) to (-3,-1);
\draw (-1,-5) to (-1,-1);

\draw (7,-5) to (5,-5);
\draw (3,-5) to (1,-5);
\draw (3,-5) to (3,-1);
\draw (5,-5) to (5,-1);

\draw (7,-1) to (5,-1);
\draw (3,-1) to (1,-1); 
\draw (1,-5) to (1,-1);
\draw (7,-5) to (7,-1);

\node at (-4.5,1) {(a)};
\node at (-0.5,1) {(b)};
\node at (3.5,1) {(c)};
\node at (7.5,1) {(d)};

\node at (-4.5,-5) {(e)};
\node at (-0.5,-5) {(f)};
\node at (3.5,-5) {(g)};
\node at (7.5,-5) {(h)};

\node at (-7.5,3) {$e_0$};
\node at (-4.5,3) {$e_1$};
\node at (-6,0.5) {$\sigma_0$};
\node at (-6,5.5) {$\sigma_1$};

\node[red] at (-3.5,3) {$e_0$};

\begin{scope}[very thick,decoration={
    markings,
    mark=at position 0.5 with {\arrow{>}}}
    ] 
    \draw[postaction={decorate},red] (-3,1) to (-3,5);
\end{scope}

\node at  (-2,0.5) {$s=0$};

\begin{scope}[very thick,decoration={
    markings,
    mark=at position 0.5 with {\arrow{>}}}
    ] 
    \draw[postaction={decorate},red] (1.5,1) to (1,1);
    \draw[postaction={decorate},red] (1,1) to (1,5);
    \draw[postaction={decorate},red] (1,5) to (1.5,5);
\end{scope}

\node at  (2,0.3) {$s<\frac{1}{2}$};

\begin{scope}[very thick,decoration={
    markings,
    mark=at position 0.5 with {\arrow{>}}}
    ] 
    \draw[postaction={decorate},red] (6,1) to (5,1);
    \draw[postaction={decorate},red] (5,1) to (5,5);
    \draw[postaction={decorate},red] (5,5) to (6,5);
\end{scope}
\node at  (6,0.3) {$s=\frac{1}{2}$};

\begin{scope}[very thick,decoration={
    markings,
    mark=at position 0.5 with {\arrow{>}}}
    ] 
    \draw[postaction={decorate},red] plot [smooth,tension=1.5] coordinates {(-6,-5) (-6.5,-3.5)  (-6,-2)  (-5.5,-3.5)  (-6,-5)};
    \draw[postaction={decorate},red] (-6,-5) to (-5,-5);
    \draw[postaction={decorate},red] (-5,-5) to (-5,-1);
    \draw[postaction={decorate},red] (-5,-1) to (-6,-1);
\end{scope}
\node at  (-6,-6) {$s=\frac{1}{2}$};

\begin{scope}[very thick,decoration={
    markings,
    mark=at position 0.5 with {\arrow{>}}}
    ]
    \draw[postaction={decorate},red] (-2,-5) to (-1,-5);
    \draw[postaction={decorate},red] (-1,-5) to (-1,-1);
    \draw[postaction={decorate},red] (-1,-1) to (-2,-1);
\end{scope}
\node at  (-2,-6) {$s=\frac{1}{2}$};

\begin{scope}[very thick,decoration={
    markings,
    mark=at position 0.5 with {\arrow{>}}}
    ]
    \draw[postaction={decorate},red] (2.5,-5) to (3,-5);
    \draw[postaction={decorate},red] (3,-5) to (3,-1);
    \draw[postaction={decorate},red] (3,-1) to (2.5,-1);
\end{scope}
\node at  (2,-6) {$s>\frac{1}{2}$};

\begin{scope}[very thick,decoration={
    markings,
    mark=at position 0.5 with {\arrow{>}}}
    ]
    \draw[postaction={decorate},red] (7,-5) to (7,-1);
\end{scope}
\node at  (6,-6) {$s=1$};
\node[red] at (7.5,-3) {$e_1$};

\end{tikzpicture}
\caption{Constructing $\gamma_s$}
\label{Fig.5}

\end{figure}

We observe that the same construction can be done parametrically.

\end{proof}

In this paper we will apply this lemma when $m_1, m_2$ are smaller than some small $\delta$. In this case we will conclude that lengths of the  $1$-parameter family of curves constructed above is at most
$l_3+l_2+R(\delta)$. 

\begin{definition} A one-parameter family of curves $G(s,t) = e_s(t)$, (see Fig. ~\ref{Fig.11}), $s, t \in I$ will be called a long  $\delta$-thin strip if the lengths of all transversal curves $\sigma_t(s)=G(s,t)$ are smaller than $\delta$ for all $t \in I$. We call the strip long, because  for each $s \in I$ we don't have a control over the lengths of curves $e_s(t)$. 
\end{definition}

\begin{figure}[h!] 
    \centering
\begin{tikzpicture}[thick]

    \draw [blue] (-5,1)--(-5,-1);
    \draw [blue] (5,1)--(5,-1);
    
    \draw [blue] (-5,1)--(5,1);
    \draw [blue] (-5,-1)--(5,-1);
    \draw [blue] (-4.5,1)--(-4.5,-1);
    \draw [blue] (-4,1)--(-4,-1);
    \draw [blue] (-3,1)--(-3,-1);
    \draw [blue] (-3.5,1)--(-3.5,-1);
    \draw [blue] (-2,1)--(-2,-1);
    \draw [blue] (-2.5,1)--(-2.5,-1);
    \draw [blue] (-1,1)--(-1,-1);
    \draw [blue] (-1.5,1)--(-1.5,-1);      
    \draw [blue] (-0.5,1)--(-.5,-1);
    \draw [blue] (0,1)--(0,-1);
    \draw [blue] (0.5,1)--(0.5,-1);
    \draw [blue] (4,1)--(4,-1);
    \draw [blue] (4.5,1)--(4.5,-1);
    \draw [blue] (3.5,1)--(3.5,-1);
    \draw [blue] (3,1)--(3,-1);
    \draw [blue] (2.5,1)--(2.5,-1);
    \draw [blue] (2,1)--(2,-1);
    \draw [blue] (1.5,1)--(1.5,-1);
    \draw [blue] (1,1)--(1,-1);

    \node at (1.5,-1.8) {$\sigma_t(s),\quad \textrm{length}(\sigma_t(s))<\delta$};
    \draw[->] (-1,-1.8) to [bend left = 50] (-1.5,-1.2);

    \end{tikzpicture}
\caption{$\delta$-thin strip $F:I \longrightarrow \Omega M$}
\label{Fig.11}

\end{figure}
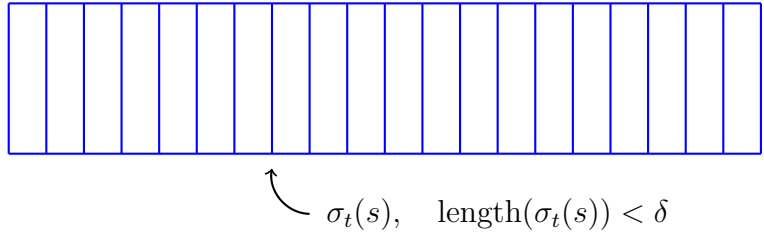

\begin{definition}

  $F: I^m \longrightarrow \Pi M$ will be called a long $\delta$-thin strip, 
if there exists a continuous family of curves $\alpha_{\tilde{x}}$,  connecting $\tilde{x} \in \partial I^m$ with the point $\tilde{p}=\left(\frac{1}{2},...,\frac{1}{2} \right)$ with the property
that for each $t \in [0,1]$ the curve $S_t: \alpha_{\tilde{x}} \to M$
defined by $S_t(x) = F(x)(t)$ has length less than $\delta$
(see Figure \ref{Fig.1}). 
\end{definition}

\begin{figure}[h!] 
    \centering
\begin{tikzpicture}[thick, scale=0.50] 
	
	
	\draw (-7,-2) to (-3,-2);
	\draw (-7,2) to (-3,2); 
	\draw (-7,2) to (-7,-2);
	\draw (-3,2) to (-3,-2);
	
	
	\draw[very thick] (3,3.5) to (3,-3.5);
	\draw[very thick] (6,3.5) to (6,-3.5);
	\draw[very thick] (3,3.5) to (6,3.5);
	\draw[very thick] (3,-3.5) to (6,-3.5);
	
	\draw [very thick, densely dashed] (5,3.5) to (5,-1.5);
	\draw (5,5.5) to (5,3.5);
	
	\draw[very thick]   (8,5.5) to (8,-1.5);
	\draw[very thick]   (5,5.5) to (8,5.5);
	
	\draw[very thick, densely dashed] (5,-1.5) to (8,-1.5);
	\draw[very thick]  (6,-1.5) to (8,-1.5);
	
	\draw[very thick] (3,3.5) to (5,5.5);
	\draw [very thick, densely dashed] (3,-3.5) to (5,-1.5);
	
	\draw[very thick] (6,3.5) to (8,5.5);
	\draw[very thick] (6,-3.5) to (8,-1.5);

	
	\draw (3,.5) to (6,.5);
	\draw   (5,2.5) to (8,2.5);
	\draw  (3,.5) to (5,2.5);
	\draw  (6,.5) to (8,2.5);
	
	\filldraw (5.5,1.5) circle[radius=2.5pt];
	
	\draw  (5.5,1.5) to (3,.5);
	\draw  (5.5,1.5) to (4,.5);
	\draw  (5.5,1.5) to (5,.5);
	\draw  (5.5,1.5) to (6,.5);
	
	\draw  (5.5,1.5) to (3.7,1.2);
	\draw  (5.5,1.5) to (4.5,2);
	\draw  (5.5,1.5) to (5,2.5);

	\draw  (5.5,1.5) to (5.8,2.5);
	\draw  (5.5,1.5) to (7,2.5);
	\draw (5.5,1.5) to (8,2.5);
	
	\draw (5.5,1.5) to (6.7,1.2);
	\draw(5.5,1.5) to (7.3,1.8);

    
    	\filldraw (-5,0) circle[radius=2.5pt];
    	\node at (-4.5, -.2)  { $\tilde p$};
    	\filldraw (-7,-1)  circle[radius=2.5pt];
    	\draw (-5,0) to  (-7,-1);
    	\node at (-6, .2)  { $ \alpha_{\tilde{x}}$};
        \node at (-7.5, -1)  { $ \tilde{x}$};
        \node at (-5, -3)  {\large $I^m$};
    	
    	\draw[->] (-2,0) to (2,0);
    	\node at (0, -.65)  {\large $F$};
    	
    	
        \node at (2.55,.7) { $S_t$};
        
\end{tikzpicture}
\caption{$\delta$-thin strip $F:I^2 \longrightarrow \Omega M$}
\label{Fig.1}
\end{figure}


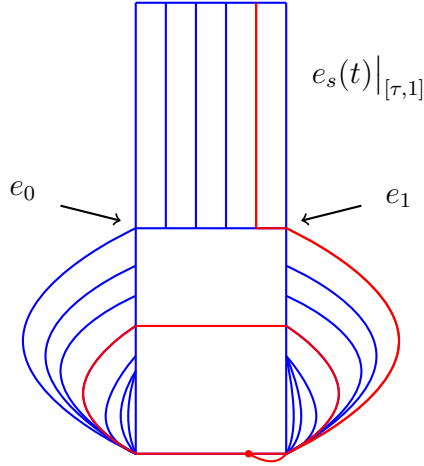
\begin{figure}[h!] 
    \centering
\begin{tikzpicture}[thick]

\draw [blue] (-1,3) -- (1,3);
\draw [blue] (-1,0) -- (1,0);
\draw [blue] (-1,-3) -- (1,-3);
\draw [blue] (-1,3) -- (-1,-3);
\draw [blue] (1,3) -- (1,-3);

\draw [red] plot [smooth, tension=1] coordinates {(1,-3) (2.5,-1.5) (1,0)};
\draw [blue] plot [smooth, tension=1] coordinates {(-1,-3) (-2.5,-1.5) (-1,0)};

\foreach \a/\b/\c in {
  2.2/-1.7/-0.5,
  2.0/-1.9/-0.9,
  1.7/-2.2/-1.3,
  1.4/-2.5/-1.7,
  1.2/-2.5/-1.7,
  1.1/-2.5/-1.7
} {
  \draw [blue] plot [smooth, tension=1] coordinates {(1,-3) (\a,\b) (1,\c)};
}

\foreach \a/\b/\c in {
  -2.2/-1.7/-0.5,
  -2.0/-1.9/-0.9,
  -1.7/-2.2/-1.3,
  -1.4/-2.5/-1.7,
  -1.2/-2.5/-1.9,
  -1.1/-2.5/-1.9
} {
  \draw [blue] plot [smooth, tension=1] coordinates {(-1,-3) (\a,\b) (-1,\c)};
}

\draw [blue] (-0.6,3) -- (-0.6,0);
\draw [blue] (-0.2,3) -- (-0.2,0);
\draw [blue] (0.2,3) -- (0.2,0);
\draw [red] (0.6,3) -- (0.6,0);
\draw [red, thick] (0.6,0) -- (1,0);
\filldraw[red] (0.5,-3) circle (1pt);

\begin{scope}[decoration={
    markings,
    mark=at position 0.5 with {\arrow{>}}}, postaction={decorate}
]
  \draw [red] (0.6,0) -- (0.6,3);
  \draw [red] plot [smooth, tension=1] coordinates {(1,-3) (2.5,-1.5) (1,0)};
  \draw [red] plot [smooth, tension=1] coordinates {(1,-1.3) (1.7,-2.2) (1,-3)};
  \draw [red] plot [smooth, tension=1] coordinates {(-1,-3) (-1.7,-2.2) (-1,-1.3)};
  \draw [red] (-1,-1.3) -- (1,-1.3);
  \draw [red] (1,-3) -- (-1,-3);
  \draw [red] plot [smooth, tension=1] coordinates {(0.5,-3) (0.8,-3.1) (1,-3)};
\end{scope}

\node at (2.5,0.4) {$e_1$};
\draw[->] (2,0.3) -- (1.2,0.1);

\node at (-2.5,0.5) {$e_0$};
\draw[->] (-2,0.3) -- (-1.2,0.1);

\node at (2.1,2) {$e_s(t)\big|_{[\tau, 1]}$};

\end{tikzpicture}
\caption{Homotopy between $F$ and $\tilde{F}$}
\label{Fig.10}

\end{figure}

\begin{lemma} \label{lem:family of arcs}
Let $F: I \longrightarrow \Omega M, F(s)=e_s(t)=e_s$ be a long $\delta$-thin strip.  Let $a^0_t$, $a^1_t$ be two one-parameter families of curves with the following properties:
\begin{itemize}
  \item
$a^i_0= e_i(0)$;
    \item 
    $a^i_t(0) = e_i(0)$;
    \item 
    $a^i_t(1) = e_i(t)$.
\end{itemize}
Suppose the maximal length of curves in both families is majorized by $L$.
    Then there exists a continuous family of curves $H: I \times I \longrightarrow  \Pi_{3L+R(\delta)}M$,
    such that 
    \begin{itemize}
        \item $H(x,i)= a^i_x$ for $i=0,1$.
        \item $H(x,t)$ is an arc of length $\leq 3L + R(\delta)$
        connecting $F(x)(0)$ to $F(x)(t)$.
    \end{itemize}
\end{lemma}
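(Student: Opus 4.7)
The strategy is to invoke the parametric version of Lemma \ref{shortcurves} with parameter $t \in [0,1]$. For each $t$, let $Q_t$ be the quadrilateral in $M$ whose opposite sides are $e^t_0 := a^0_t$ and $e^t_1 := a^1_t$, both of length at most $L$, and $\sigma_0(s) := F(s)(0)$ and $\sigma^t_1(s) := F(s)(t)$ for $s \in [0,1]$. The long $\delta$-thin strip hypothesis, applied to the boundary paths $\alpha_0, \alpha_1$ from $\partial I$ to $\tilde p = \tfrac12$, yields that $\sigma_0$ and $\sigma^t_1$ each have length less than $2\delta$, giving $m_1, m_2 \leq 2\delta$ in the notation of Lemma \ref{shortcurves}.

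Let $p := F(\tfrac12)(0) = \sigma_0(\tfrac12)$ and write $\alpha(t)$ for the based loop at $p$ built from $Q_t$ as in Lemma \ref{shortcurves}. Two features are crucial: the basepoint $p$ is independent of $t$, so the loops $\alpha(t)$ are all based at the same point; and at $t = 0$ the quadrilateral $Q_0$ degenerates---since $a^0_0, a^1_0$ are the constant curves at $F(0)(0)$ and $F(1)(0)$ and $\sigma^0_1 = \sigma_0$---so $\alpha(0)$ reduces to a back-and-forth loop along portions of $\sigma_0$ and is nullhomotopic to $p$ through based loops of length at most $2\delta$. Concatenating this short contraction of $\alpha(0)$ with the family $\tau \mapsto \alpha(\tau)$, $\tau \in [0,t]$, furnishes a contraction of $\alpha(t)$ through based loops at $p$ of length at most $2L + 4\delta$, so that we may take $l_3 \leq 2L + 4\delta$.

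The parametric version of Lemma \ref{shortcurves} then produces, for each $t$, a family $\gamma^t_s$ of curves in $M$, continuous in $(s,t)$, connecting $\sigma_0(s) = F(s)(0)$ to $\sigma^t_1(s) = F(s)(t)$, with $\gamma^t_0 = a^0_t$ and $\gamma^t_1 = a^1_t$, satisfying the length estimate
$$\length(\gamma^t_s) \leq \min\{l_1,l_2\} + l_3 + 2(m_1 + m_2) \leq L + (2L + 4\delta) + 2(2\delta + 2\delta) = 3L + 12\delta.$$
Setting $H(x,t) := \gamma^t_x$ and $R(\delta) := 12\delta$ yields a map $H : I \times I \to \Pi_{3L + R(\delta)} M$ with the required endpoints, boundary values, and length bound.

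The main obstacle I anticipate is carrying out the parametric application of Lemma \ref{shortcurves} honestly: one must verify that the building blocks $b_0, b_1$ and the nullhomotopy $\alpha_\tau$ appearing in the proof of that lemma vary continuously in $t$, and that at $t = 0$ the construction collapses compatibly with any boundary conventions required downstream (so that $H(x,0)$ is, or can trivially be adjusted to, a constant loop at $F(x)(0)$ up to an error of size $R(\delta)$). This reduces to tracking the explicit concatenation $b_0 \sim b_0 * \bar b_1 * b_1 \sim \alpha_\tau * b_1 \sim b_1$ in the proof of Lemma \ref{shortcurves} with $t$ as an extra parameter.
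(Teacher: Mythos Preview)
Your proposal is correct and follows essentially the same approach as the paper: form the quadrilateral with sides $a^0_t$, $a^1_t$, $\sigma_0(s)=F(s)(0)$, $\sigma^t_1(s)=F(s)(t)$ and apply the parametric version of Lemma~\ref{shortcurves} with $t$ as the parameter. In fact you are more explicit than the paper on the one point that matters, namely where the short based contraction $\alpha_\tau$ of the boundary loop comes from---you observe that $\alpha(0)$ is degenerate and use the family $\tau\mapsto\alpha(\tau)$, $\tau\in[0,t]$, to contract $\alpha(t)$ through loops of length $\le 2L+R(\delta)$; the paper's proof simply writes ``Applying Lemma~\ref{shortcurves}'' without spelling this out.
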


\begin{proof}
Fix $t \in (0,1)$. 
Consider a quadrilateral formed by
curves $a^0_{t}$, $e_{[0,1]}(t)$, $a^1_{t}$ and  $e_{[0,1]}(0)$. 
Applying Lemma \ref{shortcurves} we obtain a family
of arcs $H(t,x)$ interpolating between $a^0_{t}$
and $a^1_{t}$.
The result for all $t$ follows by applying the parametric version of Lemma \ref{shortcurves}.
\end{proof}


The statement
of Lemma 
\ref{lem:family of arcs}
can be generalized to higher dimensions in the following lemma. 

\begin{lemma} \label{Generallemma}
Let $F:I^m \longrightarrow \Omega M$ be a long $\delta$-thin strip. Suppose 
$H:\partial I^m \times [0,1] \longrightarrow \Pi M$ is a family of paths of length at most $l$, such that each path begins at $x=F(\tilde{x})(0)$ and ends at 
$e_{\tilde x}(t)=F(\tilde{x})(t)$. Suppose also that for $\tilde{p} = (\frac{1}{2},...,\frac{1}{2})$ there exists a
one-parameter family of curves $a^{p}_t$ connecting point $p=F(\tilde{p})(0)$ with the points $e_p(t)=F(p)(t)$ of length at most $L$.  Then one can extend $H$ to $I^m \times [0,1] \longrightarrow \Pi M$ with the same properties, so that the length of paths is at most $l+2L+R(\delta)$. 
\end{lemma}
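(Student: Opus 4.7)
The plan is to construct an extension of $H$ that, near $\tilde p$, matches the family $a^p_t$ (up to thin corrections coming from the strip) and, near $\partial I^m$, matches the given $H$, with the two regimes joined by a controlled path homotopy. I would use the radial foliation of $I^m$ furnished by the paths $\{\alpha_{\tilde y}\}_{\tilde y \in \partial I^m}$ from the definition of a long $\delta$-thin strip: for each $\tilde y$, the restriction of $F$ to $\alpha_{\tilde y}$ is a one-dimensional $\delta$-thin strip with prescribed endpoint arc families $a^p_t$ at $\tilde p$ and $H(\tilde y,\cdot)$ at $\tilde y$, so the construction is essentially a parametric-in-$\tilde y$ application of the reasoning behind Lemma \ref{lem:family of arcs}, but in a non-symmetric setting where the two endpoint length bounds $L$ and $l$ differ.

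First I would define, for each $\tilde y \in \partial I^m$, the ``canonical'' arcs
\[
\tilde H(\tilde y, t) \;=\; \beta^0_{\tilde y} \,\ast\, a^p_t \,\ast\, \overline{\beta^t_{\tilde y}},
\qquad \text{with} \qquad \beta^\tau_{\tilde y}(s) \;:=\; F(\alpha_{\tilde y}(s))(\tau),
\]
each $\beta^\tau_{\tilde y}$ a thin transversal of length less than $\delta$. Thus $\tilde H(\tilde y, t)$ is an arc from $F(\tilde y)(0)$ to $F(\tilde y)(t)$ of length at most $L+2\delta$. The given $H(\tilde y, t)$ and this $\tilde H(\tilde y, t)$ together bound the loop
\[
\mathcal{L}_{\tilde y, t} \;=\; H(\tilde y, t) \,\ast\, \beta^t_{\tilde y} \,\ast\, \overline{a^p_t} \,\ast\, \overline{\beta^0_{\tilde y}}
\]
based at $F(\tilde y)(0)$, of length at most $l+L+2\delta$. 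Shrinking $t' \in [0,t]$ yields a contraction of $\mathcal{L}_{\tilde y, t}$ through loops of the same length bound, since at $t'=0$ the loop degenerates ($H(\tilde y, 0)$ and $a^p_0$ being constant).

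Next I would invoke the parametric version of Lemma \ref{pathhomotopy} to obtain, for each $\tilde y \in \partial I^m$, a continuous path homotopy $r \mapsto K_{\tilde y, t}(r)$, $r \in [0,1]$, from $H(\tilde y, t)$ to $\tilde H(\tilde y, t)$ through arcs of length at most
\[
\min\{\, l,\; L+2\delta\,\} \;+\; (l + L + 2\delta) \;\leq\; l + 2L + R(\delta),
\]
the inequality being valid in either case $l \leq L$ or $L \leq l$. Using $K_{\tilde y, t}(r)$ as the value of the extension at the interior point $\tilde x_r$ lying on $\alpha_{\tilde y}$ at radial parameter $r$ (with $r=1$ at $\tilde y$ and $r=0$ at $\tilde p$) defines a map $H': I^m \times [0,1] \to \Pi_{l+2L+R(\delta)} M$ which, by construction, agrees with $H$ on $\partial I^m \times [0,1]$ and takes the value $a^p_t$ at $\tilde p$.

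The main obstacle is to verify that this radial assembly fits together continuously on all of $I^m$, especially at the singular point $\tilde p$ where the radial parameterization degenerates. This is handled by the parametric statements of Lemmas \ref{shortcurves} and \ref{pathhomotopy} (which produce families varying continuously in the auxiliary parameter $\tilde y$), together with the key observation that as $r \to 0$ the thin connectors $\beta^\tau_{\tilde y}$ collapse to points and the $\tilde y$-dependent arcs $K_{\tilde y, t}(r)$ converge to $a^p_t$, which is itself independent of $\tilde y$; so the extension is well defined and continuous at $\tilde p$, completing the construction with the required length bound.
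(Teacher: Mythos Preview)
Your overall architecture is the same as the paper's: foliate $I^m$ by the rays $\alpha_{\tilde y}$, and along each ray interpolate between the given boundary data $H(\tilde y,\cdot)$ and the central family $a^p_\cdot$ using the quadrilateral whose long sides are $H(\tilde y,t)$ and $a^p_t$ and whose short sides are the transversals $\beta^0_{\tilde y}$, $\beta^t_{\tilde y}$. The length bookkeeping you do, yielding $\min\{l,L\}+(l+L)+R(\delta)\le l+2L+R(\delta)$, is also correct.

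There is, however, a genuine gap in the assembly step. Lemma~\ref{pathhomotopy} produces a path homotopy $r\mapsto K_{\tilde y,t}(r)$ between $H(\tilde y,t)$ and $\tilde H(\tilde y,t)$ \emph{with fixed endpoints} $F(\tilde y)(0)$ and $F(\tilde y)(t)$. When you then declare $H'(\tilde x_r,t)=K_{\tilde y,t}(r)$ at the interior point $\tilde x_r=\alpha_{\tilde y}(r)$, the arc you have assigned starts at $F(\tilde y)(0)$ and ends at $F(\tilde y)(t)$, not at $F(\tilde x_r)(0)$ and $F(\tilde x_r)(t)$ as the lemma requires. In particular, your claim that ``as $r\to 0$ the thin connectors $\beta^\tau_{\tilde y}$ collapse to points'' is not correct: the $\beta^\tau_{\tilde y}$ are independent of $r$, so $K_{\tilde y,t}(0)=\tilde H(\tilde y,t)=\beta^0_{\tilde y}\ast a^p_t\ast\overline{\beta^t_{\tilde y}}$ still depends on $\tilde y$, and the extension is not even well defined at~$\tilde p$.

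The fix is exactly what Lemma~\ref{shortcurves} supplies and what the paper invokes: instead of using Lemma~\ref{pathhomotopy} alone, apply the construction of Lemma~\ref{shortcurves} to the quadrilateral $(H(\tilde y,t),\beta^t_{\tilde y},a^p_t,\beta^0_{\tilde y})$. That lemma produces curves $\gamma_s$ joining $\beta^0_{\tilde y}(s)$ to $\beta^t_{\tilde y}(s)$, i.e.\ joining $F(\alpha_{\tilde y}(s))(0)$ to $F(\alpha_{\tilde y}(s))(t)$, which are the correct endpoints; and at $s=0$ one recovers $a^p_t$ independently of~$\tilde y$, so the radial assembly is continuous at~$\tilde p$. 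Equivalently, you can repair your construction by pre- and post-composing $K_{\tilde y,t}(r)$ with the partial transversals $\overline{\beta^0_{\tilde y}|_{[r,1]}}$ and $\beta^t_{\tilde y}|_{[r,1]}$; this adds only $R(\delta)$ to the length and restores both the endpoint condition and the well-definedness at the center.
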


\begin{proof}

\begin{figure}[h!] 
    \centering
\begin{tikzpicture}[thick, scale=0.6]
	]

	\draw[very thick] (-3,3.5) to (-3,-3.5);
	\draw[very thick] (-6,3.5) to (-6,-3.5);
	\draw[very thick] (-3,3.5) to (-6,3.5);
	\draw[very thick] (-3,-3.5) to (-6,-3.5);

	\draw [very thick, densely dashed] (-4,3.5) to (-4,-1.5);
	\draw (-4,5.5) to (-4,3.5);
	
	\draw[very thick]   (-1,5.5) to (-1,-1.5);
	\draw[very thick]   (-4,5.5) to (-1,5.5);
	
	\draw[very thick, densely dashed] (-4,-1.5) to (-1,-1.5);
	\draw[very thick]  (-3,-1.5) to (-1,-1.5);
	
	\draw[very thick] (-6,3.5) to (-4,5.5);
	\draw [very thick, densely dashed] (-6,-3.5) to (-4,-1.5);
	
	\draw[very thick] (-3,3.5) to (-1,5.5);
	\draw[very thick] (-3,-3.5) to (-1,-1.5);
	
	
	\draw[very thick] (3,3.5) to (3,-3.5);
	\draw[very thick] (6,3.5) to (6,-3.5);
	\draw[very thick] (3,3.5) to (6,3.5);
	\draw[very thick] (3,-3.5) to (6,-3.5);
	
	\draw [very thick, densely dashed] (5,3.5) to (5,-1.5);
	\draw (5,5.5) to (5,3.5);
	
	\draw[very thick]   (8,5.5) to (8,-1.5);
	\draw[very thick]   (5,5.5) to (8,5.5);
	
	\draw[very thick, densely dashed] (5,-1.5) to (8,-1.5);
	\draw[very thick]  (6,-1.5) to (8,-1.5);
	
	\draw[very thick] (3,3.5) to (5,5.5);
	\draw [very thick, densely dashed] (3,-3.5) to (5,-1.5);
	
	\draw[very thick] (6,3.5) to (8,5.5);
	\draw[very thick] (6,-3.5) to (8,-1.5);
	
	
	\draw[very thick, pink] (-3.5,4.5) to (-3.5,-2.5);
	
	\draw[red] (-3.5,4.5) to [bend left = 70] (-3.5,-2.5);
	\draw[red] (-3.5,3.5) to [bend left = 60] (-3.5,-2.5);
	\draw[red] (-3.5,2.5) to [bend left = 50] (-3.5,-2.5);
	\draw[red] (-3.5,1.5) to [bend left = 50] (-3.5,-2.5);
	\draw[red] (-3.5,.5) to [bend left = 50] (-3.5,-2.5);
	\draw[red] (-3.5,.5) to [bend left = 25] (-3.5,-2.5);
	\draw[red] (-3.5,-1) to [bend left = 25] (-3.5,-2.5);
	
	\draw[very thick] (-5,3.5) to (-5,-3.5);
	\draw[red] (-5,3.5) to [bend right = 28] (-5,-3.5);
	\draw[red] (-5,2.25) to [bend right = 28] (-5,-3.5);
	\draw[red] (-5,1) to [bend right = 28] (-5,-3.5);
	\draw[red] (-5,-.25) to [bend right = 28] (-5,-3.5);
	\draw[red] (-5,-1.75) to [bend right = 28] (-5,-3.5);

    
    \draw[very thick] (5.5,4.5) to (5.5,-2.5);
    \draw[very thick] (4,3.5) to (4,-3.5);
    \draw[very thick] (4,3.5) to (5.5,4.5);
    
    \draw[red, very thick] (4,.5) to (5.5,1.5);
    \draw[red, very thick]  (4,-3.5) to (5.5,-2.5);
    \draw[red, very thick] (4,.5) to (4,-3.5);
    \draw[red, very thick] (5.5,1.5) to (5.5,-2.5);
    
    
     \node at (-4.5, -4.5)  {\large $F(\tilde{x})(0)$};

     \node at (-4.5, -5.75) {\large (a)};
     \node at (4.75, -5.75) {\large (b)};
     
     \node at (.5, 3)  {\large $e_p(t)$};
     \draw[->] (-.25, 3) to[bend right=25] (-3.5,2.75);

     \node at (.3, 1.2)  {\large $a^p_{y}$};
     \draw[->]  (-.25, 1) to[bend right=25] (-1.4, 1);
     
     \node at (-8, 1)  {\large $e_{\tilde x}(t)$};
     \draw[->]  (-7.2, 1.1) to[bend left=25] (-5, 1.25);
     
      \node at (-8.25, -1)  {\large $H(\tilde{x})$};
       \draw[->]  (-7.5, -1) to[bend left=25] (-5.35, -1.25);

      \node[red] at (10, 1)  {\large $F(\alpha_{\tilde x})(t^\ast)$};
      \draw[->,red] (8.5, 1) to [bend left=25] (5,1);

      \node[red] at (10, -3)  {\large $F(\alpha_{\tilde x})(0)$};
      \draw[->,red] (8.5, -3) to [bend left=25] (5,-3);

	\end{tikzpicture}
\caption{Extending $H$}
\label{Fig.2}
\end{figure}
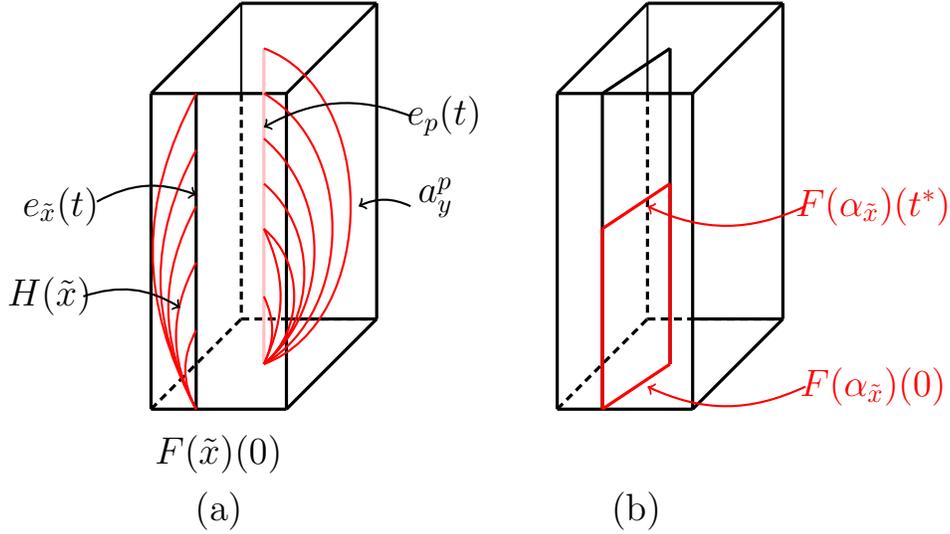

For each $\tilde{x} \in \partial I^m$, consider the long $\delta$-thin strip, with the boundary formed by the curves $e_x(t)|_{t \in [0,t^*]}$, $F(\alpha_{\tilde{x}})(0)$, $F(\alpha_{\tilde{x}})(t^*)$ and $e_p(t)|_{t \in [0,t^*]}$, (see fig. ~\ref{Fig.2} (a)). Next consider a quadrilateral formed by  $H(\tilde{x}, t^*)$,
$F(\alpha_{\tilde{x}})(t^*)$, 
$a^p_{t^*}$ and $F(\alpha_{\tilde{x}}) (0)$, (see fig. ~\ref{Fig.2} (b)). To each such quadrilateral apply 
Lemma \ref{lem:family of arcs}.
This will give us the required family of curves. 
Note that the resulting continuity of the family of curves follows from continuity of 
$F(\alpha_{\tilde{x}})(t)$ with respect to both $x$ and $t$, which results in continuity of the quadrilaterals. The continuity of $H$ implies that the homotopies that are constructed in 
Lemma ~\ref{lem:family of arcs} also vary continuously. 
\end{proof}

Now we will use the Lemmas above
to prove Proposition \ref{mainprop: short arcs}.

Let us partition each rectangle of $R_i$ of $K$ into $N$ small rectangles $rec_k$ by first subdividing  each $e_i$ of $T$ into small sub-intervals using a partition $0=x_0 < x_1 < ... < x_s=1$ and next further subdividing each $\Sigma_{x_r}$ $r \in \{0,...,s\}$ so that  
each 
$F|_{rec_k}$ is a $\delta$-thin long strip for all $k \leq N$. 

The construction of $H$ will be by induction on the skeleta of $K$. 

Let $v_i$ be a vertex in the subdivision of $K$ that lies in the interior of $K$. $F(v_i)=\gamma_i(t)$, where $t \in [0,1]$ with $\gamma(0)=\gamma(1)=p_i$. 
Since each $\gamma_i$ lies on an $L$-short sphere there exists 
a family of arcs $a^i_t$ with $a^i_0 = a^i_1$ equal to constant curves
with image $F(v_i)(0)$, and $a^i_t$ connecting $F(v_i)(0)$ to 
$F(v_i)((t))$.
We define $H(v_i,t) = a^i_t$.

Applying Lemma \ref{lem:family of arcs} to each edge $[v_i, v_j]$  we 
extend $H$ to the 1-skeleton of the triangulation. Finally, applying
Lemma \ref{Generallemma} for each $rec_k$ we extend $H$
the 2-skeleton.
This finishes the proof of Proposition \ref{mainprop: short arcs}.

\bibliography{bib} 
\bibliographystyle{amsalpha}

\end{document}